\newtheorem{thm}{Theorem}[section]
\newtheorem{lemma}[thm]{Lemma}
\newtheorem{prop}[thm]{Proposition}
\newtheorem{remark}[thm]{Remark}
\newcommand{\lver}{\left\lvert}
\newcommand{\rver}{\right\rvert}
\renewcommand{\footnote}[1]{\textsuperscript{\addtocounter{footnote}{1}(\thefootnote)}\footnotetext{#1}}
\def \epsilon {\varepsilon}
\DeclareMathOperator{\sgn}{sgn}
\begin{document}
\title{\textbf{Existence and stability of steady compressible Navier-Stokes solutions on a finite interval
with noncharacteristic boundary conditions}}
\author{Benjamin Melinand\footnote{Indiana University. Email : melinand@ceremade.dauphine.fr}, Kevin Zumbrun\footnote{Indiana University. Email : kzumbrun@indiana.edu}}
\date{October 2017}

\maketitle

\vspace{-1cm}

\begin{abstract}
We study existence and stability of steady solutions of the isentropic compressible Navier-Stokes equations on a finite interval with noncharacteristic boundary conditions, 
for general not necessarily small-amplitude data.
We show that there exists a unique solution,
about which the linearized spatial operator possesses (i) a spectral gap between neutral and growing/decaying modes, 
and (ii) an even number of nonstable eigenvalues $\lambda$ (with a nonnegative real part).
In the case that there are no nonstable eigenvalues, i.e., of spectral stability,
we show this solution to be nonlinearly exponentially stable in $H^2\times H^3$.
Using ``Goodman-type'' weighted energy estimates, we establish spectral stability 
for small-amplitude data.
For large-amplidude data, we obtain high-frequency stability, 
reducing stability investigations to a bounded frequency regime.  
On this remaining, bounded-frequency regime,
we carry out a numerical Evans function study, with results again indicating universal stability
of solutions.
\end{abstract}

\section{Introduction}

In this paper, we initiate in the simplest setting of 1D isentropic gas dynamics,
a systematic study of existence and stability of steady solutions of systems of hyperbolic parabolic
equations on a bounded domain, with noncharacteristic inflow or outflow boundary conditions,
and data and solutions of amplitudes that are not necessarily small.
We have in mind the scenario of a ``shock tube'', or finite-length channel with 
inflow-outflow boundary conditions, 
which in turn could be viewed as a generalization
of the Poisseuille flow in the incompressible case.

\medskip

Our conclusions in the present, isentropic case, obtained by rigorous nonlinear and spectral stability
theory, augmented in the large-amplitude case by numerical Evans function analysis, 
are that for any choice of data there exists a unique solution, and this solution is 
linearly and nonlinearly time-exponentially stable in $H^2\times H^3$.
These results suggest a number of interesting directions for further investigation in 1 and multi-D.

\subsection{Setting}
We consider the 1D isentropic compressible Navier-Stokes equations 
\begin{equation}\label{NS}
\left\{
\begin{aligned}
&\rho_{t} + \left(\rho u \right)_{x} = 0,\\
&\left(\rho u \right)_{t} + \left(\rho u^{2} + P(\rho) \right)_{x} = \nu u_{xx}
\end{aligned}
\right.
\end{equation}
on the interval $[0,1]$, with the noncharacteristic boundary conditions 
\begin{equation}\label{BC}
\left\{
\begin{aligned}
&\rho(t,0)=\rho_{0} > 0,\\
&u(t,0)=u_{0} > 0,\\
&u(t,1)=u_{1} > 0.
\end{aligned}
\right.
\end{equation}
Notice that we have an inflow boundary condition at $x=0$ and an outflow boundary condition at $x=1$. We assume that the viscosity $\nu$ is positive and constant and that the pressure $P$ is a smooth function satisfying

\begin{equation}\label{pressure_cond}
P' > 0.
\end{equation}

Stability of steady states for hyperbolic parabolic systems has been studied by many authors. For
problems on the whole line, the reader can refer to \cite{mascia_zumbrun,num_stab_zum} and references within. In the case of noncharacteristic boundary conditions on the half line, see for instance \cite{toan_zumbrun_ns,toan_zumbrun_hyp_par}. 
For studies of scalar conservation laws on a bounded interval, one may
see for instance \cite{Kreiss_Kreiss_burgers,Jiu_Pan_scalar_cons_bounded}. 
Finally, we refer to  \cite{control_ns_1d} for the study of boundary controllability of the 1D Navier-Stokes equations.

\medskip

In this paper, we study the existence and stability of steady states of \eqref{NS} satisfying the boundary conditions \eqref{BC}. Section \ref{section_existence_steady} is devoted to the existence and the uniqueness of such steady states. In Section \ref{section_linear_estimate}, we study the corresponding linearized problem about the steady state. In section \ref{section_spectral_stab}, we show that constant steady states and almost constant steady states, see Condition \eqref{almost_constant_assump}, are spectrally stable. We also show that general steady states are numerically spectrally stable. Section \ref{section_local_existence} is devoted to a local wellposedness result for problem \eqref{NS}-\eqref{BC}. Then, in section \ref{section_nonlinear_stab}, we show the nonlinear stability of steady states that are spectrally stable. Theorem \ref{stab_result} is the main result of this paper. Finally, in section \ref{section_improvement}, we improve the previous theorem under more restrictive assumptions. 

\begin{remark}\label{bcrmk}
It is worth noting that boundary conditions \eqref{BC} are not the only ones we can deal with. For instance, the case
\begin{equation*}
\left\{
\begin{aligned}
&\rho(t,1)=\rho_{0} > 0,\\
&u(t,0)=u_{0} < 0,\\
&u(t,1)=u_{1} < 0,
\end{aligned}
\right.
\end{equation*}
is equivalent by the change of variables $x \shortrightarrow 1-x$ and the change of unknowns $(\hat{\rho}, \hat{u}) \shortrightarrow (\hat{\rho}, -\hat{u})$.
Moreover, these two possibilities are the only types of noncharacteristic boundary conditions
yielding physically realizable steady states.
For, the first equation of \eqref{NS} yields that steady solutions have constant momentum
$\rho u\equiv m$, so that $u(0) $ and $u(1)$ necessarily agree in sign.
By similar reasoning, 
characteristic boundary conditions $u(0)=u(1)=0$ yield $u\equiv 0$
yield only trivial, constant steady states $(\rho, u)\equiv (\rho_0, 0)$.
\end{remark}

\subsection{Discussion and open problems}\label{s:discussion}
%
%

As mentioned earlier, our goal in this paper is to open a line of investigation of large-amplitude steady solutions for inflow-outflow problems on bounded domains.
The main technical contribution is our argument for nonlinear exponential stability of spectrally stable solutions, which is
both particularly simple and also applies to general hyperbolic parabolic systems of ``Kawashima type'', 
as considered on the whole- and half-line in \cite{mascia_zumbrun,num_stab_zum,toan_zumbrun_ns,toan_zumbrun_hyp_par}. 
Our goal, and the novelty of the argument as compared to those for the whole- and half-line, was to take advantage of the spectral
gap to obtain a simple proof based on standard semigroup/energy methods.
However, a close reading will reveal that this is deceptively difficult to accomplish, involving the introduction of a
precisely chosen space $(\rho, u)\in H^1 \times L^2$ with norm strong enough that we can carry out energy-based high-frequency 
resolvent estimates  and different from the usual Kawashima type estimates, but weak enough that the range of nonlinear terms is densely
contained.
\medskip

The reduction of nonlinear to spectral stability gives a base for investigation of more general systems such as full
(nonisentropic) gas dynamics or (isentropic or nonisentropic) MHD.
Our results on uniqueness and universal stability on the other hand are likely accidents of low dimension.
For example, the demonstration of unstable large-amplitude boundary layers in \cite{Serre_Zumbrun, zumbrun_standingshock}
is suggestive via the large-interval length limit from bounded interval toward the half-line, that 
unstable large-amplitude steady solutions might occur on bounded intervals for polytropic full gas dynamics in some parameter regimes. Definitely, the example of unstable shock waves on the whole line in \cite{zumbrun_convex_entropy} together with the asymptotic analysis in \cite{sandstede_abs_conv_instab,zumbrun_standingshock} of spectra in the whole-line limit 
shows that unstable steady solutions are possible 
on an interval for full gas dynamics with an artificial equation of state satisfying all of the usual requirements imposed in 
standard theory, including existence of a convex entropy, genuine nonlinearity of acoustic modes, etc.

\medskip

Moreover, due to the presence of spectral gap/absence of essential spectra in the bounded-interval problem, differently from
the whole- and half-line problems, changes in stability of the type considered in \cite{Serre_Zumbrun, zumbrun_standingshock},
involving passage of a real eigenvalue through zero, are associated necessarily with bifurcation/nonuniqueness,
by Lyapunov-Schmidt or center manifold reduction to the finite-dimensional case.\footnote{
See in particular the center manifold theory for generators of $C^0$ semigroups in \cite{haragus_Iooss} or the still more general
Fredholm-based Lyapunov-Shmidt reduction of \cite[Appendix D]{Monteiro2014} for closed densely defined operators with
an isolated crossing eigenvalue, along with the general finite-dimensional bifurcation result of
\cite[Lemma 3.10]{Barker_convexEntropy}. This is to be contrasted with the case of the whole line
discussed in \cite[\S 6.2]{Zumbrun_multi_dim_stab_planar}, for which $\lambda=0$ is embedded in essential spectrum and a crossing eigenvalue at
$\lambda=0$ may signal {\it either} steady state bifurcation as here or more complicated time-dependent bifurcations
involving far-field behavior and solutions of an
associated inviscid Riemann problem.}
Thus, any such violations of stability should yield also examples of large-amplitude nonuniqueness at the same time.
Small-amplitude uniqueness, on the other hand, follows readily by uniqueness of constant solutions, as follows by energy estimates
like those here, plus continuity. The investigation of large-amplitude uniqueness and stability for larger systems thus appears to be a very interesting direction
for future exploration; likewise, the study of the corresponding multi-D problem, for which existence/uniqueness of small-amplitude
solutions has been studied for example in \cite{kellogg_inflow_unbounded,kellogg_inflow_bounded,poiseuille_Stab_3d}.
In both 1- and multi-D, a very interesting open problem would be to study the asymptotic structure of solutions in
the small-viscosity limit, particularly in the multi-D case analogous to Poiseuille flow.

\subsection*{ Notation }

\noindent In this paper, C($\cdot$) denotes a nondecreasing and positive function and $C$ a generic notation whose exact values are of no importance.
$\lver \; \rver_{2}$ refers to the $L^{2}$-norm on $(0,1)$ and  $\lver \; \rver_{H^{n}}$, for $n\geq1$, to the $H^{n}$-norm. 
$\lver \; \rver_{\infty}$ refers to the $L^{\infty}$-norm on $[0,1]$.

\subsection*{ Acknowledgment } 

We would like to thank the anonymous referees for careful reading and for very valuable comments on the manuscript.

\section{Existence and uniqueness of steady states}\label{section_existence_steady}

\subsection{Analytical results}

In this part, we prove the following result.

\begin{prop}\label{existence_steady_sol}
Assume that $P$ is a smooth function. For any $(\rho_0, u_0, u_1)$, with $\rho_0>0,$ $u_0>0$ and $u_1>0$, problem \eqref{NS}-\eqref{BC} has a unique steady solution $\left(\hat{\rho},\hat{u} \right)$  with $\hat{\rho} > 0$.
\end{prop}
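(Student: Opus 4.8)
The plan is to reduce the steady problem to a scalar ODE for the density by exploiting the conservation of momentum. Setting time derivatives to zero in \eqref{NS}, the first equation gives $(\hat\rho\hat u)_x=0$, so $\hat\rho\hat u\equiv m$ is constant; by the inflow data, $m=\rho_0 u_0>0$. Substituting $\hat u=m/\hat\rho$ into the second steady equation $(\hat\rho\hat u^2+P(\hat\rho))_x=\nu\hat u_{xx}$ yields, after one integration, a first-order relation of the form $\nu\hat u_x=\hat\rho\hat u^2+P(\hat\rho)-K$ for an integration constant $K$, and then rewriting everything in terms of $\hat\rho$ via $\hat u_x=-m\hat\rho_x/\hat\rho^2$ gives an autonomous scalar ODE
\begin{equation*}
-\frac{\nu m}{\hat\rho^2}\,\hat\rho_x = \frac{m^2}{\hat\rho}+P(\hat\rho)-K,
\end{equation*}
i.e. $\hat\rho_x = F(\hat\rho;K)$ with $F$ smooth on $\hat\rho>0$. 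The boundary conditions translate to $\hat\rho(0)=\rho_0$ and $\hat\rho(1)=m/u_1$; so one has a two-point boundary value problem for a first-order scalar ODE with one free parameter $K$, and the task is to show there is exactly one $K$ for which the solution launched from $\hat\rho(0)=\rho_0$ reaches $\hat\rho(1)=m/u_1$ while staying positive.

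The key steps I would carry out are: (i) record $m=\rho_0u_0$ and the target endpoint value $\rho_1:=m/u_1$, and note that if $\rho_0=\rho_1$ the constant solution $\hat\rho\equiv\rho_0$ (with the appropriate $K$) works and is the unique one; (ii) observe that $F(\hat\rho;K)$ has a sign governed by the scalar function $g(\hat\rho):=\frac{m^2}{\hat\rho}+P(\hat\rho)-K$, whose derivative is $P'(\hat\rho)-m^2/\hat\rho^2$, so $g$ is strictly convex-like / has at most a simple minimum; consequently the ODE is monotone between equilibria and, for a given launch point $\rho_0$, the solution $\hat\rho(x;K)$ is strictly monotone in the parameter $K$ for each fixed $x\in(0,1]$ — this monotone dependence is what gives uniqueness; (iii) show existence by a shooting/continuity argument: the map $K\mapsto \hat\rho(1;K)$ is continuous where defined, and by choosing $K$ large (resp.\ small) one forces $g$ to be negative (resp.\ positive) on the relevant range so that $\hat\rho$ decreases (resp.\ increases) away from $\rho_0$, hence $\hat\rho(1;K)$ sweeps across $\rho_1$; an intermediate value argument then produces the needed $K$. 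Positivity of $\hat\rho$ on $[0,1]$ is automatic because $\hat\rho=0$ is not reached in finite "time" $x$ (the ODE is singular there, $F\sim 1/\hat\rho^2\cdot(\dots)$, and solutions decreasing toward $0$ do so only as $x\to\infty$), so the solution stays in $\hat\rho>0$; then $\hat u=m/\hat\rho>0$ follows.

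The main obstacle I anticipate is controlling the global behavior of the shooting map $K\mapsto\hat\rho(1;K)$: one must rule out blow-up or escape of the solution before $x=1$ for the relevant range of $K$, and establish that the reachable set of endpoint values actually contains $\rho_1$. This requires a careful case analysis according to whether $\rho_1>\rho_0$ or $\rho_1<\rho_0$ and a quantitative understanding of how the equilibria of $\hat\rho_x=F(\hat\rho;K)$ (the roots of $g$) move as $K$ varies, using \eqref{pressure_cond} to pin down the monotonicity structure of $g$. Once the monotone, well-posed dependence on $K$ is in hand, uniqueness is immediate and existence follows from an intermediate-value argument; so the bulk of the work is the ODE phase-line analysis, which I would organize around the sign and critical points of $g(\hat\rho)=m^2/\hat\rho+P(\hat\rho)-K$ rather than through any abstract functional-analytic machinery.
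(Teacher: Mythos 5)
Your proposal follows essentially the same route as the paper: reduce via constant momentum $\hat\rho\hat u=\rho_0u_0$ to a first-order scalar ODE for $\hat\rho$ with one free integration constant, then shoot on that constant, using monotone dependence of the solution on the parameter for uniqueness together with a continuity/intermediate-value argument for existence, with positivity automatic because $\hat\rho=0$ is an equilibrium not reached in finite $x$. One small remark: the statement assumes only that $P$ is smooth and the paper's argument never uses \eqref{pressure_cond}, so note that your invocation of $P'>0$ to control the phase line of $g$ is not actually needed — the monotone dependence on the shooting parameter follows from the sign of $\partial_K F$ alone.
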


\begin{proof}
A steady solution $\left(\hat{\rho},\hat{u} \right)$ of \eqref{NS}-\eqref{BC} satisfies
\begin{equation*}
\left\{
\begin{aligned}
&\left(\hat{\rho} \hat{u} \right)_{x} = 0,\\
&\left(\hat{\rho} \hat{u}^{2} + P(\hat{\rho}) \right)_{x} = \nu \hat{u}_{xx},\\
&\hat{\rho}(0)=\rho_{0},\\
&\hat{u}(0)=u_{0} \text{  ,  } \hat{u}(1)=u_{1}.
\end{aligned}
\right.
\end{equation*}
Thus, $\hat{\rho} \hat{u}=\rho_{0} u _{0}$ and 
\begin{equation}\label{ode1}
\left\{
\begin{aligned}
&\nu \rho_{0} u_{0} \hat{\rho}_{x} = b \hat{\rho}^{2} - (\rho_{0} u_{0})^{2} \hat{\rho} - \hat{\rho}^{2} P(\hat{\rho}),\\
&\hat{\rho}(0)= \rho_{0}
\end{aligned}
\right.
\end{equation}
where $b$ is a constant that has to be determined. We define the map
\begin{equation*}
\phi := b \shortrightarrow \hat{\rho}(1) - \frac{\rho_{0} u_{0}}{u_{1}}
\end{equation*}
where $\hat{\rho}$ is the unique solution of System \eqref{ode1}. Notice that we only define $\phi$ when $\hat{\rho}$ is defined on $[0,1]$. Then, we remark that $\phi(\rho_{0} u_{0}^{2} + P(\rho_{0}))=\rho_{0} - \frac{\rho_{0} u_{0}}{u_{1}}$ and that $\phi$ is increasing. We also remark that if $b_{1}$ is in the domain of $\phi$, any $b<b_{1}$ is in the domain of $\phi$. Therefore, the domain of $\phi$ is an interval containing $\rho_{0} u_{0}^{2} + P(\rho_{0})$. Furthermore, $\phi$ is not bounded from above. Otherwise, one can show that $\hat{\rho}$ is bounded uniformly with respect to $b$ and then that $\phi(b)>b$ for $b$ large enough. One can also show that $\underset{b \shortrightarrow -\infty}{\lim} \phi(b) = - \frac{\rho_{0} u_{0}}{u_{1}}$. Therefore, there exists a unique $b$ such that $\hat{\rho}(1)=\frac{\rho_{0} u_{0}}{u_{1}}$ and $\hat{\rho}>0$.
\end{proof}

\noindent Notice that a solution of \eqref{NS} is constant if and only if  $u_{0} = u_{1}$.

\medskip

In the following, we study among other things the stability of almost constant steady solutions of \eqref{NS}, by which we mean
solutions satisfying
\begin{equation}\label{almost_constant_assump}
\exists \epsilon > 0 \text{  ,  } \varepsilon \ll 1 \text{   and   } \left|u_{0} - u_{1} \right| \leq \epsilon.
\end{equation}
For this, the following lemma will be useful.
\begin{lemma}\label{control_almost_constant_state}
Assume that we are under the assumptions of Proposition \ref{existence_steady_sol}. Then, the unique steady solution $(\hat{\rho}, \hat{u})$ of problem \eqref{NS}-\eqref{BC} is smooth and if $u_{1} \neq u_{0}$ we have
\begin{equation*}
\hat{\rho} > 0 \text{  ,  } \hat{u} > 0, \left( u_{1} - u_{0} \right) \hat{\rho}_{x} < 0 \text{  ,  } \left( u_{1} - u_{0} \right) \hat{u}_{x} > 0, \underset{u_{1} \shortrightarrow u_{0}}{\lim}\left( \lver \hat{\rho}_{x} \rver_{\infty} + \lver \hat{u}_{x} \rver_{\infty} \right) = 0.
\end{equation*}
\end{lemma}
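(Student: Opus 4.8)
The plan is to exploit the autonomous scalar ODE satisfied by the density. Using the conserved momentum $\hat\rho\hat u\equiv m:=\rho_0u_0>0$, one has $\hat u=m/\hat\rho$, and \eqref{ode1} reads $\nu m\,\hat\rho_x=G_b(\hat\rho)$ with $G_b(\rho):=b\rho^2-m^2\rho-\rho^2P(\rho)$, a function that is $C^\infty$ on $\{\rho>0\}$. Since Proposition \ref{existence_steady_sol} provides $\hat\rho>0$ on $[0,1]$, the density solves an ODE with smooth right-hand side and is therefore $C^\infty$ by a standard bootstrap; then $\hat u=m/\hat\rho$ is $C^\infty$ and $\hat u>0$ because $m>0$. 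This settles the first two assertions.

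For monotonicity, observe that when $u_1\neq u_0$ the endpoint values $\hat\rho(0)=\rho_0$ and $\hat\rho(1)=\rho_0u_0/u_1$ differ, so $\hat\rho$ is nonconstant. I claim $G_b(\hat\rho(x))\neq0$ for every $x\in[0,1]$: if $G_b(\hat\rho(x_0))=0$, then the constant function equal to $\hat\rho(x_0)$ also solves \eqref{ode1} through $(x_0,\hat\rho(x_0))$, so by uniqueness for this ODE (its right-hand side being locally Lipschitz in $\rho$) $\hat\rho$ would be constant --- a contradiction. Hence $G_b\circ\hat\rho$ is continuous and nowhere vanishing on $[0,1]$, so it keeps a fixed sign, so $\hat\rho_x$ keeps a fixed sign and $\hat\rho$ is strictly monotone. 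Comparing endpoint values fixes that sign: $u_1>u_0$ forces $\hat\rho(1)<\hat\rho(0)$, hence $\hat\rho_x<0$, and $u_1<u_0$ forces $\hat\rho_x>0$; in both cases $(u_1-u_0)\hat\rho_x<0$. Since $\hat u_x=-m\hat\rho_x/\hat\rho^2$ has the opposite sign, $(u_1-u_0)\hat u_x>0$.

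For the limit $u_1\to u_0$, the key observation is that strict monotonicity traps $\hat\rho$ between its endpoint values, so $|\hat\rho-\rho_0|_\infty\le|\rho_0u_0/u_1-\rho_0|=\rho_0|u_0-u_1|/u_1\to0$; in particular $\hat\rho$ remains in a fixed compact subset of $(0,\infty)$ as $u_1\to u_0$, so $P(\hat\rho)$, $1/\hat\rho$, etc., are uniformly controlled. To convert this $C^0$ convergence into control of derivatives I would integrate the first integral $m^2/\hat\rho+P(\hat\rho)+\nu m\hat\rho_x/\hat\rho^2=b$ over $[0,1]$, which gives $b=\int_0^1\big(m^2/\hat\rho+P(\hat\rho)\big)\,dx+\nu(u_0-u_1)$ and hence $b\to b_0:=\rho_0u_0^2+P(\rho_0)$ as $u_1\to u_0$. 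Inserting $\hat\rho\to\rho_0$ and $b\to b_0$ into $\nu m\hat\rho_x=G_b(\hat\rho)$ then gives $|\hat\rho_x|_\infty\to|G_{b_0}(\rho_0)|/(\nu m)=0$, where the cancellation $G_{b_0}(\rho_0)=0$ is exactly the statement that $(\rho_0,u_0)$ is the steady state in the limiting case $u_1=u_0$; finally $|\hat u_x|_\infty=|m\hat\rho_x/\hat\rho^2|_\infty\to0$ since $\hat\rho$ is bounded below.

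The regularity and monotonicity are essentially immediate from the autonomous structure of \eqref{ode1}; I expect the only place requiring a little care is the last limit, where one must ensure that $\hat\rho$ does not drift toward $0$ as $u_1\to u_0$ and that $b$ stays bounded --- but both are automatic here, since monotonicity squeezes $\hat\rho$ between $\rho_0$ and $\rho_0u_0/u_1$, which in turn pins down $b$ through the integral identity above.
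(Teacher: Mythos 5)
Your proof is correct. The smoothness, positivity, and monotonicity claims are handled exactly as the paper intends (the paper dismisses them as ``clear''): the key observation that $G_b\circ\hat\rho$ cannot vanish without forcing $\hat\rho$ to be constant, by uniqueness for the autonomous ODE \eqref{ode1}, is the right way to get a fixed sign for $\hat\rho_x$, and the endpoint comparison $\hat\rho(1)=\rho_0u_0/u_1$ versus $\hat\rho(0)=\rho_0$ pins that sign down. Where you genuinely diverge from the paper is the final limit. The paper obtains it from ``a comparison argument and the continuity of the map $\phi$,'' i.e.\ it leans on continuous dependence of the shooting map $b\mapsto\hat\rho(1)$ to conclude $b\to b_0$, whereas you extract $b\to b_0:=\rho_0u_0^2+P(\rho_0)$ directly from the integrated momentum identity
\begin{equation*}
b=\int_0^1\Bigl(\frac{m^2}{\hat\rho}+P(\hat\rho)\Bigr)\,dx+\nu(u_0-u_1),
\end{equation*}
combined with the monotonicity squeeze $\min(\rho_0,\rho_0u_0/u_1)\le\hat\rho\le\max(\rho_0,\rho_0u_0/u_1)$, which gives $\hat\rho\to\rho_0$ uniformly without any ODE-continuity input. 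Feeding $b\to b_0$ and $\hat\rho\to\rho_0$ back into $\nu m\hat\rho_x=G_b(\hat\rho)$ and using $G_{b_0}(\rho_0)=0$ then yields $\lver\hat\rho_x\rver_\infty\to0$, and $\lver\hat u_x\rver_\infty\to0$ follows since $\hat\rho$ stays bounded below. Your route is more self-contained and quantitative --- it sidesteps having to justify continuity and invertibility properties of $\phi$ jointly in $b$ and the boundary data --- at the mild cost of one extra integration by parts; both arguments are sound, and yours correctly identifies (and resolves) the only delicate points, namely that $\hat\rho$ stays away from $0$ and that $b$ remains under control as $u_1\to u_0$.
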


\begin{proof}
The first four inequalities are clear (notice that if $u_{0} \neq u_{1}$, $|\hat{u}_{x}|>0$). The last inequality follows from a comparison argument and the continuity of the map $\phi$.
\end{proof}

We denote solutions as {\it compressive} when $\hat{u}_{x} > 0$ and {\it expansive} when $\hat{u}_{x} < 0$.

\subsection{Numerical simulations}
A steady solution $\left(\hat{\rho},\hat{u} \right)$ of \eqref{NS}-\eqref{BC} is characterized by system \eqref{ode1} where $b$ is the unique zero of $\phi$. The numerical computation of such a solution is carried out in two steps:

\medskip

\noindent - We compute $b$ with a Newton's method. We initiate the process with $b=b_{0}$ where

\begin{equation*}
b_{0} = \rho_{0} u_{0}^{2} + P(\rho_{0}).
\end{equation*}
Note that for a small viscosity ($\nu \leq 1$), the initial starting point $b=b_{0}$ ceases to be relevant. Thus, in this case, we use a dichotomy method to find a better starting point.

\medskip

\noindent - The solution of system \eqref{ode1} is computed with a four-order Runge-Kutta method.

\medskip
\medskip

We display the results of numerical simulations for a monatomic pressure law $P(\rho) = \rho^{1.4}$ with
$\nu=1$. Figure \ref{steadysol1} represents the expansive solution for $u_{0}=2$, $u_{1}=3$ and $\rho_{0}=3$. Figure \ref{steadysol2} represents the compressive solution when $u_{0}=1.5$, $u_{1}=1$ and $\rho_{0}=2$.

\begin{figure}[!t]
\centering
\includegraphics[scale=0.25]{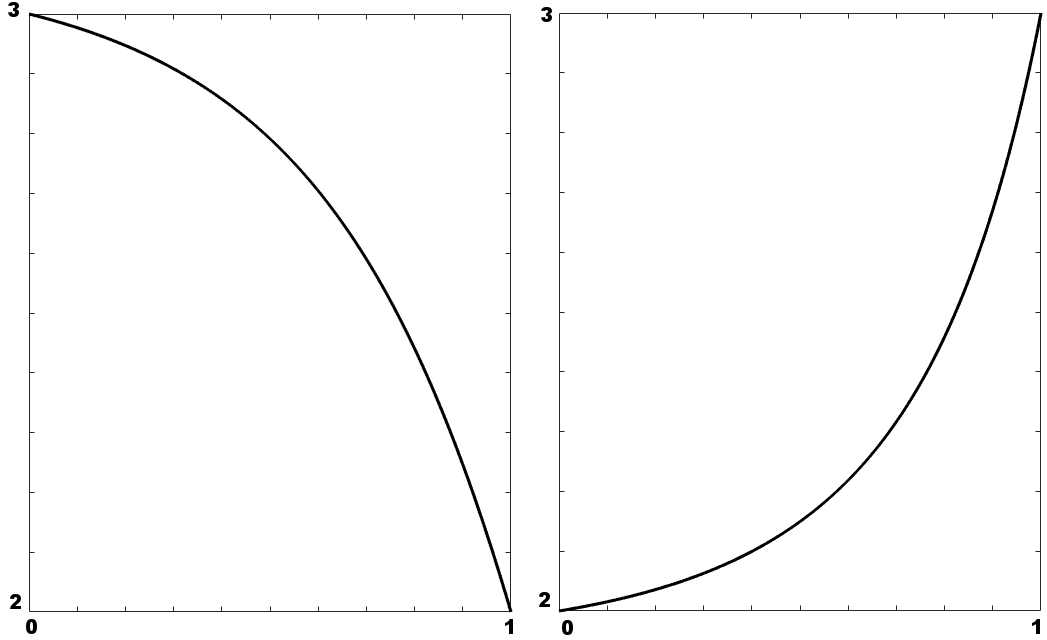}
\caption{A steady expansive solution of \eqref{NS}. Left: $\rho$ ; Right: $u$}
\label{steadysol1}
\end{figure}

\begin{figure}[!t]
\centering
\includegraphics[scale=0.25]{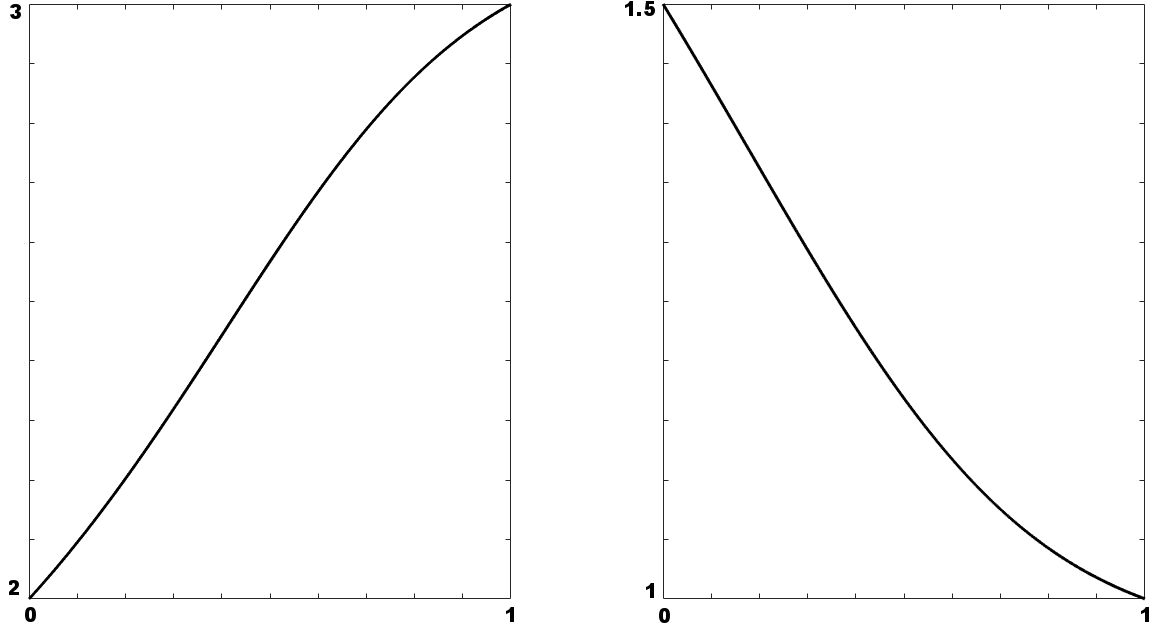}
\caption{A steady compressive solution of \eqref{NS}. Left: $\rho$ ; Right: $u$}
\label{steadysol2}
\end{figure}

\section{Linear estimates}\label{section_linear_estimate}

\subsection{The eigenvalue problem}\label{eigenvalue_problem_part}
In order to study the stability of steady states, we linearize system \eqref{NS} about the steady state $\left(\hat{\rho},\hat{u} \right)$. Then, we study the corresponding eigenvalue problem for $(r,v)$
\begin{equation}\label{linear_steady}
\left\{
\begin{aligned}
&\lambda r + \left( \hat{\rho} v +\hat{u} r \right)_{x} = 0,\\
&\lambda \hat{\rho} v + \left(\hat{\rho} \hat{u} v + P'(\hat{\rho}) r \right)_{x} + \hat{u}_{x} \left(\hat{u} r + \hat{\rho} v \right) = \nu v_{xx},
\end{aligned}
\right.
\end{equation}
with
\begin{equation}\label{BC_linear_steady}
r(0)=v(0)=v(1) = 0.
\end{equation}
We define the linear unbounded operator
\begin{equation}\label{linear_op}
\mathcal{L} \left( r , v \right) = \begin{pmatrix} - \left( \hat{\rho} v +\hat{u} r  \right)_{x} \\ \nu v_{xx} - \left(\hat{\rho} \hat{u} v + P'(\hat{\rho}) r \right)_{x} - \hat{u}_{x} \left(\hat{u} r + \hat{\rho} v \right) \end{pmatrix},
\end{equation}
for $(r,v)$ in the domain $\mathcal{D}(\mathcal{L}) = \left\{(r,v) \in H^{1} \times H^{2} \text{, } r(0)=v(0)=v(1)= 0 \right\}$ and the matrix
\begin{equation}\label{linear_op_S}
\mathcal{S} = \begin{pmatrix} 1 & 0 \\ 0 &  \hat{\rho} \end{pmatrix}.
\end{equation}

\begin{remark}
For constant steady states, the eigenvalue problem simplifies into
\begin{equation}\label{linear_constant}
\left\{
\begin{aligned}
&\lambda r + \hat{\rho} v_{x} +\hat{u} r_{x} = 0,\\
&\lambda \hat{\rho} v +  \hat{\rho} \hat{u} v_{x} + P'(\hat{\rho}) r_{x} = \nu v_{xx}.
\end{aligned}
\right.
\end{equation}
\end{remark}
In the following, we denote by $\sigma(\mathcal{S}^{-1} \mathcal{L})$ the spectrum of $\left( \mathcal{S}^{-1} \mathcal{L}, \mathcal{D}(\mathcal{L}) \right)$ in $L^{2}(0,1)$. The following proposition shows that $\sigma(\mathcal{S}^{-1} \mathcal{L})$ only contains eigenvalues.
\begin{prop}\label{compact_inverse_L}
The inverse of $\mathcal{L}$ exists and is compact and $\sigma(\mathcal{S}^{-1} \mathcal{L})$ only contains eigenvalues. Furthermore, the spectrum of $\left( \mathcal{S}^{-1} \mathcal{L}, \mathcal{D}(\mathcal{L}) \cap H^{2} \times H^{3} \right)$ in the space $\left\{ (r,v) \in H^{1} \text{, } r(0)=v(0)=v(1)= 0 \right\}$ only contains eigenvalues.
\end{prop}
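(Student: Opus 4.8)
The plan is to establish the resolvent of $\mathcal{S}^{-1}\mathcal{L}$ is compact, from which the spectral description follows by the analytic Fredholm theorem. First I would show that $\mathcal{L}$ itself is invertible: given $(f,g)\in L^2\times L^2$, solve $\mathcal{L}(r,v)=(f,g)$ explicitly. The first component $-(\hat\rho v+\hat u r)_x=f$ together with $r(0)=0$ lets one integrate once; writing $w:=\hat\rho v+\hat u r$, we get $w(x)=w(0)-\int_0^x f$, and since $r(0)=0$ we have $w(0)=\hat\rho(0)v(0)=0$, so $w$ is determined. Substituting $v=(w-\hat u r)/\hat\rho$ into the second equation turns it into a second-order ODE for $r$ (or, equivalently, for $v$) of the form $\nu v_{xx}+(\text{lower order in }v,r)=g$, a linear system with smooth coefficients and three boundary conditions $r(0)=v(0)=v(1)=0$, which is exactly the right count for the $1+2$ order system. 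I would check solvability either by the method of characteristics/variation of parameters or by a Lax-Milgram/energy argument: the key structural point is that the convective terms $\hat u r_x$, $\hat\rho\hat u v_x$ are transported in the direction of positive characteristic speed ($\hat u>0$, $\hat\rho\hat u>0$) so the inflow data at $x=0$ fully determines $r$, while the viscous term $\nu v_{xx}$ with two-point data for $v$ is coercive. This yields a unique $(r,v)\in H^1\times H^2$, hence $\mathcal{L}^{-1}:L^2\times L^2\to \mathcal D(\mathcal L)$ is bounded, and since $H^1\times H^2\hookrightarrow L^2\times L^2$ is compact (Rellich), $\mathcal{L}^{-1}$ is compact as an operator on $L^2\times L^2$.

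Next, since $\mathcal{S}=\mathrm{diag}(1,\hat\rho)$ with $\hat\rho$ smooth, bounded, and bounded below away from zero (by Lemma \ref{control_almost_constant_state}), $\mathcal{S}$ and $\mathcal{S}^{-1}$ are bounded isomorphisms of $L^2\times L^2$ preserving all the relevant spaces. Therefore $(\mathcal{S}^{-1}\mathcal{L})^{-1}=\mathcal{L}^{-1}\mathcal{S}$ is compact on $L^2\times L^2$. By the spectral theory of compact operators, $0\notin\sigma(\mathcal S^{-1}\mathcal L)$ and for $\lambda\neq 0$, $\lambda\in\sigma(\mathcal S^{-1}\mathcal L)$ iff $\lambda^{-1}\in\sigma((\mathcal S^{-1}\mathcal L)^{-1})\setminus\{0\}$, which consists only of isolated eigenvalues of finite multiplicity; hence $\sigma(\mathcal S^{-1}\mathcal L)$ consists only of eigenvalues. (Equivalently: $(\mathcal S^{-1}\mathcal L-\lambda)^{-1}=(I-\lambda\mathcal L^{-1}\mathcal S)^{-1}\mathcal L^{-1}\mathcal S$ whenever it exists, and analytic Fredholm theory applies.)

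For the second assertion, I would observe that the same computation solving $\mathcal L(r,v)=(f,g)$ gains regularity: if $(f,g)\in H^1\times H^1$, then $w=\hat\rho v+\hat u r$ satisfies $w_x=-f\in H^1$, so $w\in H^2$; feeding this into the ODE for $v$, the equation $\nu v_{xx}=g+(\text{$H^1$ terms})$ gives $v\in H^3$, and then $r\in H^2$ from the first equation. So $\mathcal L^{-1}$ maps $H^1\times H^1$ (intersected with the obvious boundary constraints, which are automatic here) boundedly into $\mathcal D(\mathcal L)\cap(H^2\times H^3)$, and this inclusion into $\{(r,v)\in H^1:\ r(0)=v(0)=v(1)=0\}$ is compact, again by Rellich. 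The identical compact-operator/analytic-Fredholm argument then shows the spectrum in this stronger space also consists only of eigenvalues. Since a number is an eigenvalue here iff the corresponding eigenfunction (which automatically lies in $H^2\times H^3$ by the elliptic regularity bootstrap just described) is nonzero, the two eigenvalue sets in fact coincide, though only the stated containment is needed.

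The main obstacle is the solvability of the boundary-value problem $\mathcal L(r,v)=(f,g)$ with the mixed inflow/two-point conditions $r(0)=v(0)=v(1)=0$: one must verify that this is exactly the well-posed count of boundary conditions for the coupled first-order (in $r$) / second-order (in $v$) system and that there is no nontrivial kernel, i.e. $\mathcal L(r,v)=0$ with these boundary conditions forces $(r,v)=0$. I expect to handle the kernel triviality by an energy estimate — multiply the first equation by an appropriate multiple of $P'(\hat\rho)r/\hat\rho$ (or use the symmetrizer $\mathcal S$) and the second by $v$, integrate by parts, and use $\hat u>0$ at the inflow boundary together with $\nu>0$ to get a sign-definite quadratic form, exactly the Goodman-type structure exploited elsewhere in the paper — with the smallness of steady-state gradients (Lemma \ref{control_almost_constant_state}) available if needed, though for mere invertibility one hopes a cruder bound suffices. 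Existence then follows from uniqueness by the Fredholm alternative for the ODE system, or directly by constructing the solution operator via the fundamental solution.
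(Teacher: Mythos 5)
Your overall architecture is the same as the paper's: invert $\mathcal{L}$ by hand, observe that $\mathcal{L}^{-1}$ maps $L^2$ boundedly into $H^1\times H^2$ so that $(\mathcal S^{-1}\mathcal L)^{-1}=\mathcal L^{-1}\mathcal S$ is compact by Rellich, conclude via the spectral theory of operators with compact resolvent, and repeat with one more derivative for the second statement. The reduction via $w=\hat\rho v+\hat u r$, $w(0)=0$, $w_x=-f$ is exactly what the paper does, and your regularity-bootstrap argument for the $H^1$-based space is also the intended one.

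The one step where you diverge — and where your primary proposed route has a real problem — is the solvability of the reduced boundary-value problem for $v$. You propose to keep it as a second-order two-point problem and settle existence by the Fredholm alternative, with kernel triviality coming from a symmetrizer/Goodman-type energy estimate, invoking smallness of $\hat\rho_x,\hat u_x$ ``if needed.'' But Lemma \ref{control_almost_constant_state} gives smallness of those gradients only for almost-constant states, whereas this proposition must hold for \emph{arbitrary} amplitude (that is the whole point of the paper). A naive energy identity at $\lambda=0$ produces terms weighted by $\hat u_x$ and $\hat\rho_x$ of uncontrolled sign; indeed, for the reduced homogeneous equation one finds $-\nu\lver v_x\rver_2^2=\tfrac12\int_0^1\bigl(\hat\rho\hat u-\tfrac{P'(\hat\rho)\hat\rho}{\hat u}\bigr)_x\lver v\rver^2$, whose right-hand side has the sign of $-\hat\rho_x$ and hence yields no contradiction for compressive profiles. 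The paper sidesteps all of this: it integrates the second equation once in $x$, obtaining a \emph{first-order} linear ODE for $v$ with the single unknown constant $v_x(0)$, solves it explicitly by an integrating factor, and notes that $v(1)$ depends affinely on $v_x(0)$ with coefficient $\frac{1}{\nu}\int_0^1\exp\bigl(\frac{1}{\nu}\int_y^1(\hat\rho\hat u-\frac{P'(\hat\rho)\hat\rho}{\hat u})\,dz\bigr)dy>0$, so $v(1)=0$ determines $v_x(0)$ uniquely and boundedly in terms of $\lver(f,g)\rver_2$. This is amplitude-independent and simultaneously gives existence, uniqueness, and the $H^1\times H^2$ bound needed for compactness. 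You do mention ``variation of parameters'' and ``constructing the solution operator via the fundamental solution'' as alternatives; that is the route you should take, and the energy-estimate fallback should be dropped as it does not close in the large-amplitude regime.
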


\begin{proof}
First we show that $0 \notin \sigma(\mathcal{L})$. For $f$ and $g$ in $L^{2}(0,1)$, we solve
\begin{equation*}
\mathcal{L} (\rho,v) = (f,g) \text{  with  } r(0)=v(0)=v(1) = 0.
\end{equation*}
This leads to the system
\begin{equation*}
\left\{
\begin{aligned}
&\hat{\rho} v +\hat{u} r = -\int_{0}^{x} f(y),\\
&\nu v_{x} = \nu v_{x}(0) + \hat{\rho} \hat{u} v - \frac{P'(\hat{\rho})}{\hat{u}} \left(\hat{\rho} v + \int_{0}^{x} f \right) + \int_{0}^{x} g(y) - \int_{0}^{x} \hat{u}_{x} \int_{0}^{y} f(y).
\end{aligned}
\right.
\end{equation*}

\noindent Then, we can solve the second equation with the initial condition $v(0)=0$
\small
\begin{equation*}
v(x) = \frac{1}{\nu} \int_{0}^{x} \exp \left( \frac{1}{\nu} \int_{y}^{x}  \hat{\rho} \hat{u} - \frac{P'(\hat{\rho})}{\hat{u}} \hat{\rho} dz \right) \left( \nu v_{x}(0) - \frac{P'(\hat{\rho})}{\hat{u}} \int_{0}^{y} f  + \int_{0}^{y} g - \int_{0}^{y} \hat{u}_{x} \int_{0}^{z} f \right) dy.
\end{equation*}
\normalsize

\noindent Since $v(1)=0$, we can compute $v_{x}(0)$ and it implies that $\mathcal{L}$ is invertible. Furthermore, if $f$ and $g$ are bounded in $L^{2}(0,1)$, we get from the previous equality that $v_{x}(0)$ is bounded and that $v$ is bounded in $H^{1}(0,1)$. The first statement follows easily. The second statement follows from similar computations.
\end{proof}

\begin{remark}
\noindent Notice that $(\hat{\rho}',\hat{u}')$ can not be an eigenfunction of $\mathcal{S}^{-1} \mathcal{L}$ for the eigenvalue $\lambda=0$ since it does not satisfy the boundary conditions \eqref{BC_linear_steady}. This differs from the whole line case.
\end{remark}

In order to prove the spectral stability of steady solutions, we need high frequency estimates for problem \eqref{linear_steady}-\eqref{BC_linear_steady}. First, we establish a useful lemma.

\begin{lemma}\label{high_frequency_control}
For any $(r,v)$ satisfying the boundary conditions \eqref{BC_linear_steady} and $\lver \lambda \rver$ large enough, we have
\begin{align*}
&\lver \tilde{r} \rver_{2}^{2} \leq \frac{C}{\lver \lambda \rver} \left( \lver r \rver_{2}^{2} + \lver v \rver_{2}^{2} + \lver \left( \lambda \mathcal{S} - \mathcal{L} \right)(r,v)_{1} \rver^{2}_{2} \right),\\
&\lver r \rver_{2}^{2} + \lver v \rver_{2}^{2} \leq \frac{C}{\lver \lambda \rver} \left( \lver r_{x} \rver_{2}^{2} + \lver v_{x} \rver_{2}^{2} + \lver \left( \lambda \mathcal{S} - \mathcal{L} \right)(r,v) \rver^{2}_{2} \right),\\
&\lver v_{x} \rver_{2}^{2} \leq \frac{C}{\lver \lambda \rver} \left( \lver r_{x} \rver_{2}^{2} + \lver v_{xx} \rver_{2}^{2} + \lver \left( \lambda \mathcal{S} - \mathcal{L} \right) (r,v) \rver^{2}_{H^{1}} \right),
\end{align*}
where $\tilde{r}(x) = \int_{0}^{x} r(y) dy$.
\end{lemma}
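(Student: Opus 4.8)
The plan is to derive each of the three inequalities by rewriting the resolvent equation $(\lambda \mathcal{S} - \mathcal{L})(r,v) = (F,G)$ in a form where the large parameter $\lambda$ can be moved onto the left-hand side, then integrating by parts over $(0,1)$ and using the boundary conditions \eqref{BC_linear_steady} ($r(0)=v(0)=v(1)=0$) to kill boundary terms. Writing the system out, the first component reads $\lambda r + (\hat\rho v + \hat u r)_x = F$, and the second $\lambda \hat\rho v + (\hat\rho\hat u v + P'(\hat\rho) r)_x + \hat u_x(\hat u r + \hat\rho v) - \nu v_{xx} = G$.

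For the first inequality, I would integrate the first component equation in $x$: since $\tilde r(x) = \int_0^x r$, one gets $\lambda \tilde r + \hat\rho v + \hat u r = \int_0^x F =: \tilde F$ (the constant of integration vanishes because $r(0)=v(0)=0$). Hence $\lambda \tilde r = \tilde F - \hat\rho v - \hat u r$, so $|\lambda|^2 |\tilde r|_2^2 \le C(|\tilde F|_2^2 + |v|_2^2 + |r|_2^2)$; dividing by $|\lambda|^2$ and bounding $|\tilde F|_2 \le |F|_2$ (Poincaré, since $\tilde F(0)=0$), and noting $1/|\lambda|^2 \le (1/|\lambda|)\cdot(1/|\lambda|_{\min})$ for $|\lambda|$ large, gives the claimed bound with $\tfrac{C}{|\lambda|}$ in front — provided we also absorb the $|\tilde F|_2^2$ term, which is fine since $|F|_2^2 \le |(\lambda\mathcal S - \mathcal L)(r,v)_1|_2^2$. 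For the second inequality, I would instead pair the first equation with $\bar r$ and the second (after the substitution $\hat\rho v = (\text{something}) - \hat u r$ coming from the first equation, or directly) with $\bar v$ in $L^2$, integrate by parts to convert $(\hat\rho v + \hat u r)_x$ against $\bar r$ and $v_{xx}$ against $\bar v$ into $-\int(\hat\rho v+\hat u r)\bar r_x$ and $-\int |v_x|^2$ respectively, take real parts, and isolate $\lambda(|r|_2^2 + \int\hat\rho|v|^2)$; the real part of $\lambda$ times these positive quantities is controlled by terms involving $r_x, v_x$ and the forcing, yielding $|r|_2^2 + |v|_2^2 \le \tfrac{C}{|\lambda|}(|r_x|_2^2 + |v_x|_2^2 + |(\lambda\mathcal S-\mathcal L)(r,v)|_2^2)$ after dividing by $|\lambda|$ and using positivity of $\hat\rho$. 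The third inequality is the analogue "one derivative up": differentiate the system once (so the forcing is measured in $H^1$), pair the differentiated second equation with $\bar v_x$, integrate by parts using $v(0)=v(1)=0$ to handle the boundary terms generated, and extract $\lambda |v_x|_2^2$, controlled by $|r_x|_2^2, |v_{xx}|_2^2$ and $|(\lambda\mathcal S - \mathcal L)(r,v)|_{H^1}^2$.

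The main obstacle I anticipate is bookkeeping the lower-order commutator terms — the coefficients $\hat\rho, \hat u, \hat u_x, P'(\hat\rho)$ and their derivatives are $x$-dependent (though smooth and bounded above and below by Lemma \ref{control_almost_constant_state}), so integration by parts produces a proliferation of terms like $\int \hat u_x r \bar r$, $\int (\hat\rho\hat u)_x v\bar v$, etc. These are all lower order and hence harmlessly absorbed into the right-hand side (they are bounded by $C(|r|_2^2 + |v|_2^2 + \ldots)$ with a constant independent of $\lambda$, and then the factor $1/|\lambda|$ does the rest), but one must be careful that nothing of top order in $v$ on the right — e.g. the $\nu v_{xx}$ term in the third estimate — is mishandled; the point is that pairing $v_{xx}$-type quantities against the right test function and integrating by parts trades two derivatives for one on each side, keeping everything at the level of $|v_x|_2$ or $|v_{xx}|_2$ as allowed. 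A secondary subtlety is that in the second and third estimates one genuinely needs the real-part trick (not just Cauchy–Schwarz on $|\lambda|$), because a crude bound would only give control of $|\lambda|(|r|_2^2+|v|_2^2)$ by the forcing and would not close; taking real parts exploits that the principal transport/diffusion terms contribute a definite sign or a controllable boundary-free error.

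Finally, I would remark that throughout, "for $|\lambda|$ large enough" is used exactly to guarantee that the terms one wishes to absorb into the left-hand side (those with no $\lambda$) are indeed dominated, and to replace the natural $1/|\lambda|^2$ factors by $C/|\lambda|$; no smallness of the steady state is needed here, only its smoothness and the uniform positivity of $\hat\rho$ and $\hat u$.
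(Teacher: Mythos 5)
Your overall architecture --- integrate the mass equation once to obtain $\lambda\tilde r+\hat\rho v+\hat u r=\tilde f$ for the first bound, then test the equations against $\bar r$, $\bar v$ and a derivative of $v$ and integrate by parts --- is the same as the paper's, and your first inequality is proved exactly as there. However, two steps in your treatment of the second and third inequalities would fail as written. First, taking real parts is the wrong move: it produces $\Re(\lambda)\bigl(\lvert r\rvert_2^2+\int_0^1\hat\rho\lvert v\rvert^2\bigr)$ on the left, and dividing by $\lvert\lambda\rvert$ then leaves a prefactor $\Re(\lambda)/\lvert\lambda\rvert$ which vanishes (or is negative) for $\lambda$ on or near the imaginary axis --- precisely the regime the lemma is needed for, since it feeds Proposition \ref{high_freq_estimate} and, through it, the resolvent bound on the vertical line $\Re(\lambda)=-\theta$ used with Pr\"uss' theorem in Proposition \ref{pruss_thm}. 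What is needed, and what the paper does, is to take the \emph{modulus} of the identity: since $\int_0^1\bar r\,\lambda r=\lambda\lvert r\rvert_2^2$, one has $\lvert\lambda\rvert\,\lvert r\rvert_2^2=\bigl\lvert\int_0^1\bar r\,\lambda r\bigr\rvert$, and one then substitutes the equation for $\lambda r$ (respectively $\lambda\hat\rho v$, $\lambda(\hat\rho v)_x$) and estimates after integrating by parts. Your closing remark that ``one genuinely needs the real-part trick'' has this exactly backwards.

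Second, the boundary conditions $r(0)=v(0)=v(1)=0$ do not kill the boundary terms. In the second estimate the term $u_1\lvert r(1)\rvert^2$ survives and must be absorbed via $\lvert r\rvert_\infty^2\le 2\lvert r\rvert_2\lvert r_x\rvert_2$ (Lemma \ref{Linfty_controls}); this is routine. In the third estimate, however, integration by parts leaves the term $\bigl[\hat\rho\,\overline{v_x}\bigl(\nu v_{xx}-P'(\hat\rho)r_x\bigr)\bigr]_0^1$, which cannot be controlled by interpolation alone: that would require $\lvert v_{xx}\rvert_\infty$, hence $H^3$ control of $v$, which is not available on the right-hand side of the claimed inequality. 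The missing idea is to evaluate the two equations \emph{at} $x=0$ and $x=1$, using $r(0)=v(0)=v(1)=0$, so as to rewrite $\nu v_{xx}-P'(\hat\rho)r_x$ at the endpoints in terms of $v_x$, $r$ and the forcing there (this is the role of the identities \eqref{boundary_equalities} and of the analogous relations in the paper's proof of this lemma), after which Lemma \ref{Linfty_controls} closes the estimate. Without this step the third inequality does not follow.
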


\begin{proof}

If we denote $\left( \lambda \mathcal{S} - \mathcal{L} \right) (r,v) = (f,g)$ and $\tilde{f}(x) = \int_{0}^{x} f(y) dy$, we have 
\begin{equation}\label{system_int}
\left\{
\begin{aligned}
&\lambda \tilde{r} + \hat{\rho} v +\hat{u} r = \tilde{f},\\
&\lambda r + \left( \hat{\rho} v +\hat{u} r \right)_{x} = f,\\
&\lambda \hat{\rho} v + \left(\hat{\rho} \hat{u} v + P'(\hat{\rho}) r \right)_{x} + \hat{u}_{x} \left(\hat{u} r + \hat{\rho} v \right) = \nu v_{xx} + g.
\end{aligned}
\right.
\end{equation}
Thus, we easily see that
\begin{equation*}
\lver \tilde{r} \rver_{2}^{2} \leq \frac{C}{\lver \lambda \rver} \left( \lver r \rver^{2}_{2} + \lver v \rver^{2}_{2} + \lver f \rver^{2}_{2} \right).
\end{equation*}
Furthermore, by integrating by parts, we get
\small
\begin{equation*}
\lver r \rver_{2}^{2} + \lver \sqrt{\hat{\rho}} v \rver_{2}^{2} = \frac{1}{\lver \lambda \rver} \lver \int_{0}^{1} \overline{r} \lambda r \rver +  \frac{1}{\lver \lambda \rver} \lver \int_{0}^{1} \overline{v} \lambda \hat{\rho} v \rver \leq \frac{C}{\lver \lambda \rver} \left( \lver r \rver^{2}_{2} + \lver \left[ \hat{u} \lver r \rver^{2} \right]_{0}^{1} \rver + \lver v \rver_{H^{1}}^{2} + \lver (f,g) \rver^{2}_{2} \right)
\end{equation*}
\normalsize 
and the second inequality follows from Lemma \ref{Linfty_controls}. Finally, by differentiating the second equation of \eqref{system_int}, we obtain
\small
\begin{equation*}
\lver \left(\hat{\rho} v \right)_{x} \rver_{2}^{2} = \frac{1}{\lver \lambda \rver} \lver \int_{0}^{1} \overline{\left(\hat{\rho} v \right)_{x}} \lambda \left(\hat{\rho} v \right)_{x} \rver \leq \frac{C}{\lver \lambda \rver} \left( \lver r \rver_{H^{1}}^{2} + \lver v \rver_{H^{2}}^{2} + \lver \left[\hat{\rho} \overline{v_{x}} \left(\nu v_{xx} - P'(\hat{\rho}) r_{x} \right) \right]_{0}^{1} \rver + \lver (f,g) \rver^{2}_{H^{1}} \right).
\end{equation*} 
\normalsize
Then, we notice that
\small
\begin{equation*}
\left\{
\begin{aligned}
&\nu v_{xx}(0) - P'(\rho_{0}) r_{x}(0) = \rho_{0} u_{0} v_{x}(0)- g(0),\\
&\nu v_{xx}(1) - P'(\hat{\rho}(1)) r_{x}(1) = \hat{\rho}(1) u_{1} v_{x}(1) + P''(\hat{\rho}(1)) \hat{\rho}_{x}(1) r(1) + \hat{u}_{x}(1) u_{1} r(1) - g(1)
\end{aligned}
\right.
\end{equation*}
\normalsize
and thanks to Lemma \ref{Linfty_controls},  we get
\begin{equation*}
\lver \left[\hat{\rho} \overline{v_{x}} \left(\nu v_{xx} - P'(\hat{\rho}) r_{x} \right) \right]_{0}^{1} \rver \leq C \lver v \rver_{H^{2}}^{2} + C \lver r \rver_{H^{1}}^{2} + C \lver g \rver_{H^{1}}^{2}.
\end{equation*}
The result follows easily.
\end{proof}

We can now establish a high frequency estimate in $H^{1}$. 

\begin{prop}\label{high_freq_estimate}
Assume that $P$ satisfies \eqref{pressure_cond}. There exists a constant $\alpha > 0$ such that if $\Re(\lambda) > - \alpha$ and $\lver \lambda \rver$ is large enough,
\begin{equation*}
\lver (r,v) \rver^{2}_{H^{1}} \leq C \lver \left(\lambda \mathcal{S} - \mathcal{L} \right) (r,v) \rver^{2}_{H^{1} \times L^2} 
\end{equation*}
for any $(r,v)\in \left\{(r,v) \in H^{1} \times H^{1} \text{, } r(0)=v(0)=v(1)= 0 \right\}$. 
\end{prop}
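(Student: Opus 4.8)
The plan is to run a standard resolvent energy estimate, using the three inequalities from Lemma \ref{high_frequency_control} to absorb all lower-order boundary and interior terms once $\lver\lambda\rver$ is large. Write $(\lambda\mathcal{S}-\mathcal{L})(r,v)=(f,g)$ with $(f,g)\in H^1\times L^2$, so that the pair $(r,v)$ solves system \eqref{system_int}. The basic move is to test the first (density) equation against $\overline{r}$ in $H^1$ and the second (momentum) equation against $\overline{v}$ in $H^1$, i.e. to form $\Re\langle\lambda r+(\hat\rho v+\hat u r)_x,\ r\rangle_{H^1}$ and $\Re\langle \lambda\hat\rho v+(\hat\rho\hat u v+P'(\hat\rho)r)_x+\hat u_x(\hat u r+\hat\rho v)-\nu v_{xx},\ v\rangle_{H^1}$, then combine the two with a weight (the role of $\mathcal{S}$) so that the dangerous first-order transport term $P'(\hat\rho)r_x\cdot v$ in the momentum equation cancels against $\hat\rho v_x\cdot r$ from the density equation — this is exactly where condition \eqref{pressure_cond}, $P'>0$, is used, to make the cross terms combine with a definite sign after the appropriate rescaling, paralleling the symmetrizer $\mathcal{S}=\mathrm{diag}(1,\hat\rho)$ already introduced. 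The good terms produced are $\Re(\lambda)\bigl(\lver r\rver_{H^1}^2+\lver\sqrt{\hat\rho}v\rver_{H^1}^2\bigr)$ from the time-derivative terms and $\nu\lver v_x\rver_{H^1}^2=\nu(\lver v_x\rver_2^2+\lver v_{xx}\rver_2^2)$ from the viscosity (after integrating $-\nu v_{xx}$ against $v$ in $H^1$ and using the boundary conditions $v(0)=v(1)=0$ to kill the $L^2$-level boundary term and control the $H^1$-level one).

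Next I would collect the error terms. These are of three kinds: (i) interior terms with coefficients involving $\hat\rho_x,\hat u_x,\hat\rho,\hat u$ and their derivatives, which are all bounded by $C\lver(r,v)\rver_{H^1}^2$ but at strictly lower differential order than the good $\nu\lver v_x\rver_{H^1}^2$ and than $\Re(\lambda)\lver(r,v)\rver_{H^1}^2$; (ii) boundary terms at $x=0,1$ of the form $[\hat u\lver r\rver^2]_0^1$, $[\hat\rho\hat u\lver v\rver^2]_0^1$, $[\hat\rho\overline{v_x}(\nu v_{xx}-P'(\hat\rho)r_x)]_0^1$ and similar, which by the boundary conditions \eqref{BC_linear_steady} and the pointwise identities recorded in the proof of Lemma \ref{high_frequency_control} (rewriting $\nu v_{xx}-P'r_x$ at the endpoints in terms of $v_x$, $r$, $g$) are all dominated by $C\lver v\rver_{H^2}^2+C\lver r\rver_{H^1}^2+C\lver g\rver_{H^1}^2$; and (iii) source terms, bounded by $C\lver(f,g)\rver_{H^1\times L^2}^2$ plus cross terms $\lver(f,g)\rver\cdot\lver(r,v)\rver$ handled by Young's inequality. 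The point is that every error except the genuinely dangerous $C\lver v\rver_{H^2}^2=C(\lver v_x\rver_2^2+\lver v_{xx}\rver_2^2+\lver v\rver_2^2)$ and $C\lver r_x\rver_2^2$ is either at $L^2$ order (hence $\ll$ the $H^1$ good terms for $\lver\lambda\rver$ large, via the second and third inequalities of Lemma \ref{high_frequency_control}) or already controlled by the source.

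The main obstacle, and the crux of the argument, is that the good viscous term only controls $v_x,v_{xx}$ but not $r_x$, while the error bucket contains $C(\lver v_{xx}\rver_2^2+\lver r_x\rver_2^2)$; naively these are the same size as the good terms and cannot be absorbed. This is precisely what Lemma \ref{high_frequency_control} is designed to defeat: its third inequality gives $\lver v_x\rver_2^2\le\frac{C}{\lver\lambda\rver}(\lver r_x\rver_2^2+\lver v_{xx}\rver_2^2+\lver(f,g)\rver_{H^1}^2)$, and differentiating the first equation of \eqref{system_int} expresses $(\hat\rho v)_x$ — hence, modulo lower order, $r_x$ — in terms of $\lambda\tilde r$-type quantities plus $r,v$ and $f$; combined with the first inequality $\lver\tilde r\rver_2^2\le\frac{C}{\lver\lambda\rver}(\lver r\rver_2^2+\lver v\rver_2^2+\lver f\rver_2^2)$ this trades the troublesome $\lver r_x\rver_2^2$ for $\lver\lambda\rver^{-1}$ times $H^2$-order quantities plus data, and likewise the second inequality trades $\lver r\rver_2^2+\lver v\rver_2^2$ for $\lver\lambda\rver^{-1}(\lver r_x\rver_2^2+\lver v_x\rver_2^2)+$ data. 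Chaining these, one gets that the entire error bucket is $\le\frac{C}{\lver\lambda\rver}\bigl(\lver(r,v)\rver_{H^1}^2+\nu\lver v_x\rver_{H^1}^2\bigr)+C\lver(f,g)\rver_{H^1\times L^2}^2$. Choosing $\lver\lambda\rver$ large enough that $\frac{C}{\lver\lambda\rver}\le\tfrac12\min(1,\nu)$ absorbs the error into the left-hand side; and if additionally $\Re(\lambda)>-\alpha$ for $\alpha$ small relative to $\nu$ and the coefficient bounds, the term $\Re(\lambda)\lver(r,v)\rver_{H^1}^2$ — possibly negative — is dominated by a fraction of $\nu\lver v_x\rver_{H^1}^2$ plus a fraction of the recovered $\lver r_x\rver_2^2$, leaving $\tfrac12\lver(r,v)\rver_{H^1}^2\le C\lver(f,g)\rver_{H^1\times L^2}^2$, which is the claimed estimate. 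The bookkeeping of which terms sit at which differential order, and verifying that the three inequalities of Lemma \ref{high_frequency_control} really do close the loop (rather than circularly re-introducing $\lver r_x\rver_2$), is the part that needs care; everything else is routine integration by parts and Young's inequality.
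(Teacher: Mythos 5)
There is a genuine gap, and it sits exactly at the point you flag as the crux. Your scheme produces dissipation only from the viscosity, i.e.\ a good term $-\nu\lvert v_x\rvert_{H^1}^2$, while the error bucket contains $+C\lvert r_x\rvert_2^2$ with $C$ independent of $\lambda$; nothing in your computation ever produces a \emph{negative}-definite multiple of $\lvert r_x\rvert_2^2$. Your proposed fix --- chaining the inequalities of Lemma \ref{high_frequency_control} to ``trade $\lvert r_x\rvert_2^2$ for $\lvert\lambda\rvert^{-1}$ times $H^2$-order quantities'' --- cannot work, because all three inequalities of that lemma go in the opposite direction: they bound a \emph{lower}-order norm ($\tilde r$, then $(r,v)$ in $L^2$, then $v_x$) by $\lvert\lambda\rvert^{-1}$ times \emph{higher}-order norms plus data. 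There is no inequality there, and there cannot be one, that bounds $\lvert r_x\rvert_2$ by something small: $r$ solves a first-order transport equation with no smoothing, and solving that equation for $r_x$ introduces the term $\lambda r/\hat u$, which carries a factor $\lvert\lambda\rvert$ and makes things larger, not smaller. The failure is most visible at $\lambda=iM$ with $M$ large: there $\Re(\lambda)=0$, the left-hand side of your energy identity vanishes, and you would need the right-hand side to be $\le -c\lvert(r,v)\rvert_{H^1}^2+C\lvert(f,g)\rvert^2$, which the uncompensated $+C\lvert r_x\rvert_2^2$ forbids. (The final sentence of your argument appeals to ``the recovered $\lvert r_x\rvert_2^2$'', but no such term was ever recovered.)

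The missing idea is the Goodman-type weight, which is why the paper's energy is taken at the derivative level with scalar weights: $\mathcal{E}=\tfrac12\int_0^1\phi_1\lvert r_x\rvert^2+\phi_2\lvert(\hat\rho v)_x\rvert^2$, with $\phi_1=P'(\hat\rho)\phi_2$ chosen to kill the cross term $\Re(\overline{r_x}v_{xx})$ (this is where $P'>0$ enters, matching your symmetrizer remark), and --- crucially --- $\phi_1$ chosen so that $\tfrac12(\hat u\phi_1)_x-2\hat u_x\phi_1<0$. Integrating the transport term $\hat u r_{xx}$ against $\phi_1\overline{r_x}$ by parts then yields the coefficient $\tfrac12(\hat u\phi_1)_x-2\hat u_x\phi_1$ multiplying $\lvert r_x\rvert^2$, and the ODE choice $\hat u\phi_1'=3\hat u'\phi_1-\delta\hat u$ makes this coefficient equal to $-\tfrac{\delta}{2}\hat u<0$, using the noncharacteristic condition $\hat u>0$. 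That is the source of the $r_x$-damping; it comes from the convection and the weight, not from $\lvert\lambda\rvert$ or the viscosity. With both $-c\lvert r_x\rvert_2^2$ and $-\nu\hat\rho\phi_2\lvert v_{xx}\rvert_2^2$ in hand, Lemma \ref{high_frequency_control} is then used only for what it actually provides: absorbing the genuinely lower-order interior and boundary errors for $\lvert\lambda\rvert$ large. Your treatment of the boundary terms via the identities \eqref{boundary_equalities} and Lemma \ref{Linfty_controls}, and of the source terms via Young's inequality, matches the paper and is fine; it is only the mechanism for dissipating $r_x$ that is absent.
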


\begin{proof}
This proof is based on an appropriate Goodman-type energy estimate. In the following we denote $ \left(\lambda \mathcal{S} - \mathcal{L}\right) (\rho,v) =(f,g)$. We define the energy 
\begin{equation*}
\mathcal{E} \left(r,v \right) = \frac{1}{2} \int_{0}^{1} \phi_{1} \lver r_{x} \rver^{2} + \phi_{2} \lver \left( \hat{\rho} v \right)_{x} \rver^{2}
\end{equation*}
where $\phi_{1}$ and $\phi_{2}$ satisfy
\begin{equation*}
\phi_{1} > 0 \text{ ,  } \phi_{2} > 0 \text{ ,  }  \phi_{1} = P'(\hat{\rho}) \phi_{2} \text{ ,  } \frac{1}{2} (\hat{u} \phi_{1})_{x} - 2 \hat{u}_{x} \phi_{1} < 0\footnote{For instance, $\phi_{1}(0)=1$ and $\hat{u} \phi_{1}'=3 \hat{u}' \phi_{1} - \delta \hat{u}$ for $\delta>0$ small enough.}.
\end{equation*}
This energy is equivalent to the $H^{1}$-norm by the Poincar\'e inequality (see Lemma \ref{poincare}). Then, we compute 
\begin{equation*}
2 \Re(\lambda) \mathcal{E} \left(r,v \right) = \Re \left( \int_{0}^{1} \phi_{1} \overline{r_{x}} \lambda r_{x} \right) + \Re \left( \int_{0}^{1} \phi_{2} \overline{\left( \hat{\rho} v \right)_{x}} \lambda \left( \hat{\rho} v \right)_{x} \right).
\end{equation*}
Arguing by density, we assume that $(r,v) \in H^{2} \times H^{3}$. We have
\begin{equation*}
\begin{aligned}
2 \Re(\lambda) \mathcal{E} \left(r,v \right) \leq &-  \int_{0}^{1} \Re \left[ \phi_{1} \overline{r_{x}} \left( \hat{u} r_{xx} + \hat{\rho} v_{xx} + \hat{u}_{x} r_{x} \right] \right) + C \lver r_{x} \rver_{2} \left( \lver r \rver_{2} + \lver v \rver_{H^{1}} + \lver f \rver_{H^{1}} \right)\\ 
&+\int_{0}^{1} \Re \left[ \phi_{2} \hat{\rho} \overline{v_{x}} \left( \nu v_{xxx} - P'(\hat{\rho}) r_{xx} - \hat{\rho} \hat{u} v_{xx} \right) \right] + \int_{0}^{1} \phi_{2} \hat{\rho} \overline{v_{x}} g_{x}\\
&+\int_{0}^{1} \Re \left[ \phi_{2} \hat{\rho}_{x} \overline{v} \left( \nu v_{xxx} - P'(\hat{\rho}) r_{xx} \right] \right) + C \lver v \rver_{H^{1}} \left( \lver r \rver_{H^{1}} + \lver v \rver_{H^{1}} + \lver v_{xx} \rver_{2}\right).
\end{aligned}
\end{equation*}
Therefore, we have
\small
\begin{equation*}
\begin{aligned}
2 \Re(\lambda) \mathcal{E} \left(r,v \right) \leq & \int_{0}^{1} -\nu \hat{\rho} \phi_{2} \lver v_{xx} \rver^{2} + \left(\frac{1}{2} (\hat{u} \phi_{1})_{x} - 2 \hat{u}_{x} \phi_{1} \right) \lver r_{x} \rver^{2} + \Re(\overline{r_{x}} v_{xx}) \left(- \hat{\rho} \phi_{1} + \hat{\rho} P'(\hat{\rho}) \phi_{2} \right)\\
&+ \Re \left( \left[-\frac{1}{2} \hat{u} \phi_{1} \lver r_{x} \rver^{2} + \phi_{2} \hat{\rho} \overline{v_{x}} \left(\nu v_{xx} - P'(\hat{\rho}) r_{x} + g \right) \right]_{0}^{1} \right)\\
&+ C \lver r_{x} \rver_{2} \left( \lver r \rver_{2} + \lver v \rver_{H^{1}} + \lver f \rver_{H^{1}} \right) + C \lver v \rver_{H^{1}} \left( \lver r \rver_{H^{1}} + \lver v \rver_{H^{1}} + \lver v_{xx} \rver_{2}\right) + C \lver g \rver_{2} \lver v_{xx} \rver_{2}.
\end{aligned}
\end{equation*}
\normalsize
Then, we notice that
\begin{equation}\label{boundary_equalities}
\begin{aligned}
& u_{0} r_{x}(0) = - \rho_{0} v_{x}(0) + f(0),\\
&\nu v_{xx}(0) - P'(\rho_{0}) r_{x}(0) + g(0) = \rho_{0} u_{0} v_{x}(0),\\
&\nu v_{xx}(1) - P'(\hat{\rho}(1)) r_{x}(1) + g(1) = \hat{\rho}(1) u_{1} v_{x}(1) + \left( P''(\hat{\rho}(1)) \hat{\rho}_{x}(1) + \hat{u}_{x}(1) u_{1} \right)  r(1)
\end{aligned}
\end{equation}
and thanks to Lemma \ref{Linfty_controls}, we obtain
\small
\begin{equation*}
\Re \left( \left[-\frac{1}{2} \hat{u} \phi_{1} \lver r_{x} \rver^{2} + \phi_{2} \hat{\rho} \overline{v_{x}} \left(\nu v_{xx} - P'(\hat{\rho}) r_{x} + g \right) \right]_{0}^{1} \right) \leq C \lver r \rver_{2} \lver r_{x} \rver_{2} + C \lver v_{x} \rver_{2} \lver v_{x} \rver_{H^{1}} + C \lver f \rver_{H^{1}}^{2}.
\end{equation*}
\normalsize
Then, using the second and the third inequality of Lemma \ref{high_frequency_control} and Young's inequality, we can find a constant $\alpha > 0$, such that for $\lver \lambda \rver$ large enough,
\begin{equation*}
2 \Re(\lambda) \mathcal{E} \left(r,v \right) \leq - \alpha \lver (r_{x},v_{xx}) \rver_{2}^{2} + C \lver \left(\lambda \mathcal{S} - \mathcal{L}\right) (r,v) \rver_{H^{1} \times L^{2}}^{2}.
\end{equation*}
Since $\mathcal{E}$ is a norm equivalent to the $H^{1}$-norm, the inequality follows from the Poincar\'e-Wirtinger inequality on $v_{x}$.
\end{proof}

\subsection{The linear time evolution problem}
In this part, we study the linearization of system \eqref{NS} about the steady state $\left(\hat{\rho},\hat{u} \right)$
\begin{equation*}
\left\{
\begin{aligned}
&\mathcal{S} \begin{pmatrix} r \\ v \end{pmatrix}_{t} - \mathcal{L} \begin{pmatrix} r \\ v \end{pmatrix} = 0,\\
&\left( r,v \right)_{|t=0} = \left( r_{0},v_{0} \right),\\
&r(0)=v(0)=v(1)=0.
\end{aligned}
\right.
\end{equation*}
We define for $k \in \mathbb{N}$ the spaces
\begin{align*}
&\mathcal{H}_{k} = \left\{ (r,v) \in H^{k} \text{, } \frac{d^{l}r}{dx^{l}}(0)=\frac{d^{l}v}{dx^{l}}(0)=\frac{d^{l}v}{dx^{l}}(1)= 0 \text{ , for any } l<k \right\}\\
&\mathcal{H}_{1,0} = \left\{(r,v) \in H^{1} \times L^{2} \text{, } r(0)=0 \right\}.
\end{align*}
The main goal of this subsection is to show a linear exponential stability in $\mathcal{H}_{1,0}$. This will help us to show the nonlinear exponential stability (see Remark \ref{H1XL2_explanation}).
\noindent The following lemmas show that $\mathcal{S}^{-1} \mathcal{L}$ generates a $\mathcal{C}^{0}$-semigroup on $\mathcal{H}_{k}$ and $\mathcal{H}_{1,0}$.  

\begin{lemma}\label{gen_C0_semigroup_L2}
The operator $\left( \mathcal{S}^{-1} \mathcal{L}, \mathcal{D}(\mathcal{L}) \right)$ is closed densely defined on $L^{2}$ and generates a $\mathcal{C}^{0}$-semigroup. Similarly $\left( \mathcal{S}^{-1} \mathcal{L},  \mathcal{H}_{k} \cap H^{k+1} \times H^{k+2} \right)$ is closed densely defined on $\mathcal{H}_{k}$ and generates a $\mathcal{C}^{0}$-semigroup.
\end{lemma}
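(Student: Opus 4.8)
The plan is to invoke the Lumer--Phillips / Hille--Yosida machinery after a suitable equivalent inner product, which is the standard route for establishing that $\mathcal{S}^{-1}\mathcal{L}$ generates a $\mathcal{C}^0$-semigroup. First I would check that $\mathcal{S}^{-1}\mathcal{L}$ with domain $\mathcal{D}(\mathcal{L})$ is densely defined and closed on $L^2$: density is immediate since $C_c^\infty$ functions (vanishing at both endpoints together with their derivatives) lie in $\mathcal{D}(\mathcal{L})$ and are dense, while closedness follows from Proposition \ref{compact_inverse_L} — $\mathcal{L}$ has a bounded (indeed compact) inverse, and since $\mathcal{S}$ is a bounded, boundedly invertible multiplication operator (recall $\hat\rho>0$ is smooth and bounded below by Lemma \ref{control_almost_constant_state}), $\mathcal{S}^{-1}\mathcal{L}$ has bounded inverse $\mathcal{L}^{-1}\mathcal{S}$, hence is closed. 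Next I would observe that $L^2$ equipped with the $\mathcal{S}$-weighted inner product $\langle (r,v),(r',v')\rangle_{\mathcal{S}} = \int_0^1 \overline{r}r' + \hat\rho\,\overline{v}v'$ is a Hilbert space with norm equivalent to the usual one; this is the natural symmetrizer, and it is the space in which the energy computations of the paper are carried out.

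The heart of the argument is a dissipativity (accretivity) estimate: I would show there is a real constant $\omega$ such that $\Re\langle \mathcal{S}^{-1}\mathcal{L}(r,v),(r,v)\rangle_{\mathcal{S}} \le \omega\,\|(r,v)\|_{\mathcal{S}}^2$ for all $(r,v)\in\mathcal{D}(\mathcal{L})$, i.e. $\mathcal{S}^{-1}\mathcal{L}-\omega$ is dissipative. Concretely one computes
\[
\Re\int_0^1 \overline{r}\,(\text{first component of }\mathcal{L}) + \overline{v}\,(\text{second component of }\mathcal{L}),
\]
integrates by parts, and uses the boundary conditions $r(0)=v(0)=v(1)=0$ to kill the boundary terms; the leading viscous term $\int \nu\,\overline{v}v_{xx} = -\nu\int |v_x|^2 \le 0$ is dissipative, and all remaining terms are bounded by $C\|(r,v)\|_{L^2}^2$ after moving derivatives off $r$ and $v$ appropriately (the convective and zeroth-order terms involving $\hat u$, $\hat u_x$, $\hat\rho$, $P'(\hat\rho)$ are smooth coefficients). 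Together with the same estimate for the adjoint $(\mathcal{S}^{-1}\mathcal{L})^* - \omega$ (again a dissipativity bound, obtained by the analogous integration by parts; the adjoint has the same principal structure), the Lumer--Phillips theorem gives that $\mathcal{S}^{-1}\mathcal{L}-\omega$ generates a $\mathcal{C}^0$-semigroup of contractions, hence $\mathcal{S}^{-1}\mathcal{L}$ generates a $\mathcal{C}^0$-semigroup. Alternatively, having the resolvent bound on a right half-plane from Proposition \ref{high_freq_estimate} and Proposition \ref{compact_inverse_L} (only discretely many eigenvalues, each of finite multiplicity, with a spectral gap) one could invoke Hille--Yosida directly; I would mention this as the cleaner route if the adjoint computation proves awkward.

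For the $\mathcal{H}_k$ statement the strategy is identical but carried out in the higher-order space. Here one works with the inner product $\sum_{l\le k}\langle \partial_x^l(r,v),\partial_x^l(r,v)\rangle_{\mathcal{S}}$ (or a Goodman-weighted variant as in Proposition \ref{high_freq_estimate}), notes that $\mathcal{D} = \mathcal{H}_k\cap H^{k+1}\times H^{k+2}$ is dense in $\mathcal{H}_k$ and that $\mathcal{S}^{-1}\mathcal{L}$ maps $\mathcal{D}$ into $\mathcal{H}_k$ because the compatibility conditions $\partial_x^l r(0) = \partial_x^l v(0) = \partial_x^l v(1) = 0$ for $l<k$ are preserved — this is exactly why the spaces $\mathcal{H}_k$ were defined — and establishes the analogous dissipativity estimate by differentiating the equation $k$ times and integrating by parts, the new boundary terms again being controlled using the compatibility conditions together with Lemma \ref{high_frequency_control}-type substitutions that express high normal derivatives at the endpoints in terms of lower-order data. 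Closedness on $\mathcal{H}_k$ follows as before from invertibility of $\mathcal{L}$ together with elliptic regularity (the second statement of Proposition \ref{compact_inverse_L} gives the regularity-gaining property of $\mathcal{L}^{-1}$, which bootstraps to all $k$). The main obstacle I anticipate is the bookkeeping of the higher-order boundary terms in the $\mathcal{H}_k$ case: unlike the $L^2$ case, where the three Dirichlet conditions suffice to annihilate every boundary contribution, in $\mathcal{H}_k$ one must carefully substitute the PDE to rewrite the ``extra'' endpoint derivatives and absorb them, and getting the signs right there — paralleling the delicate estimate in the proof of Proposition \ref{high_freq_estimate} — is where the real work lies.
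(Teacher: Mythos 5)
Your proposal is correct and follows essentially the same route as the paper: a quasi-dissipativity (integration-by-parts) estimate in the $\mathcal{S}$-weighted inner product, combined with the eigenvalues-only/compact-resolvent property of Proposition \ref{compact_inverse_L} to secure the range condition, and then Hille--Yosida --- which is precisely the ``alternative'' you describe and is how the paper actually closes the argument, rather than via the adjoint and Lumer--Phillips. One small nuance: the Dirichlet conditions do not kill \emph{all} boundary terms in the $L^2$ estimate --- the term $-\tfrac12 \hat u(1)\lvert r(1)\rvert^2$ survives but has a favorable sign since $u_1>0$, exactly the ``good sign for $\lvert r(1)\rvert^2$'' the paper invokes.
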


\begin{proof}
The proof is similar to the proof of Proposition 2.2 in \cite{mascia_zumbrun}. In the following we denote $(f,g)=(\lambda-\mathcal{S}^{-1}\mathcal{L})(r,v)$. For $\lambda>0$ and $(r,v) \in  \mathcal{D}(\mathcal{L})$
\begin{equation*}
\begin{aligned}
\lambda \lver r \rver_{2}^{2} + \left( \lambda \hat{\rho} v , \frac{1}{\hat{\rho}} v \right)_{2} &= -\left( \hat{\rho} r_{x}, r \right)_{2} - \left( \hat{u} v_{x}, r \right)_{2} + \left( f, r \right)_{2}  + \nu \left(v_{xx}, v \right)_{2}\\
&\hspace{0.45cm} -\left( P'(\hat{\rho}) r_{x}, v \right)_{2} - \left( \hat{\rho} \hat{u} v_{x}, r \right)_{2} + \left( g, v \right)_{2} + C \lver (r,v) \rver_{2}^{2} \\
&\leq - \nu \lver v_{x} \rver_{2}^{2} + \left( f, r \right)_{2}  + \left( g, v \right)_{2} + C \lver (r,v) \rver_{2} \left( \lver (r,v) \rver_{2} + \lver v_{x} \rver_{2} \right)
\end{aligned}
\end{equation*}
where we have integrated by parts and we have noticed a good sign for $\lver r(1) \rver^{2}$. Applying Young's inequality, there exists a constant $C_{0}>0$ such that 
\begin{equation}\label{o}
\lambda \lver r \rver_{2}^{2} + \lambda \lver v \rver_{2}^{2} \leq C_{0} \lver (r,v) \rver_{2}^{2}  +\lver (r,v) \rver_{2} \lver (f,g) \rver_{2}.
\end{equation}
Dividing by $ \lver (r, v) \rver_{2}$, we get
\begin{equation*}
\lambda \lver (r, v) \rver_{2} \leq C_{0} \lver (r,v) \rver_{2} +  \lver (\lambda-\mathcal{S}^{-1} \mathcal{L})(r,v) \rver_{2},
\end{equation*}
hence for $\lambda > C_{0}$
\begin{equation}\label{int1_C0semigroup}
\lver (r, v) \rver_{2} \leq \frac{1}{\lambda - C_{0}} \lver (\lambda-\mathcal{S}^{-1}\mathcal{L})(r,v) \rver_{2}.
\end{equation} 
Similarly, we have for $(r,v) \in \mathcal{H}_{1} \cap H^{2} \times H^{3}$
\small
\begin{equation*}
\lambda \lver r_{x} \rver_{2}^{2} + \lambda \lver v_{x} \rver_{2}^{2} = (f_{x},r_{x})_{2} + (g_{x},v_{x})_{2} -(\hat{u} r_{xx},r_{x})_{2} + (v_{xxx}, \frac{\nu}{\hat{\rho}} v_{x})_{2} + C \left( \lver v_{xx} \rver_{2} + \lver (r_{x},v_{x}) \rver_{2} \right) \lver(r,v) \rver_{H^{1}} 
\end{equation*}
\normalsize
and there exists a constant $C_{1}>0$ such that for any $\lambda>0$
\begin{equation}\label{t}
\lambda \lver (r_{x}, v_{x}) \rver_{2}^{2} \leq C_{1} \lver (r,v) \rver_{H^{1}}^{2} + \lver (r_{x},v_{x}) \rver_{2} \lver (f_{x},g_{x}) \rver_{2}.
\end{equation}
\noindent Summing \eqref{o} and \eqref{t}, and noting that
\begin{equation*}
\lver (r,v) \rver_{2} \lver (f,g) \rver_{2} + \lver (r_{x},v_{x}) \rver_{2} \lver (f_{x},g_{x}) \rver_{2}
\leq \lver(r,v)\rver_{H^1} \lver(f,g)\rver_{H^1}
\end{equation*}
by Cauchy-Schwarz' inequality, we get
\begin{equation*}
\lambda \lver (r, v) \rver_{H^{1}}^{2} \leq (C_{0}+C_{1}) \lver (r,v) \rver_{H^{1}}^{2} + \lver (r,v) \rver_{H^{1}} \lver (\lambda-\mathcal{S}^{-1} \mathcal{L})(r,v) \rver_{H^{1}}
\end{equation*}
hence for $\lambda > C_{0}+C_{1}$,
\begin{equation}\label{int2_C0semigroup}
\lver (r, v) \rver_{H^{1}} \leq \frac{1}{\lambda - C_{0}-C_{1}} \lver (\lambda-\mathcal{S}^{-1}\mathcal{L})(r,v) \rver_{H^{1}}.
\end{equation}

Since we know from Proposition \ref{compact_inverse_L} that the spectrum of $\mathcal{S}^{-1} \mathcal{L}$ only contains eigenvalues, the inequalities \eqref{int1_C0semigroup} and \eqref{int2_C0semigroup} give resolvent bounds and it shows that $\mathcal{S}^{-1} \mathcal{L}$ generates a $\mathcal{C}^{0}$-semigroup on $L^{2}$ and $\mathcal{H}_{1}$ by the Hille-Yosida theorem (see also \cite{pazy_semigroup}). The case $k \geq 2$ is a small adaptation of the previous estimates.
\end{proof}

In the following, we denote this $\mathcal{C}^{0}$-semigroup by $e^{t \mathcal{S}^{-1} \mathcal{L}}$.

\begin{lemma}
There exists a constant $\omega>0$ such that for any $(r_{0},v_{0}) \in \mathcal{H}_{1,0}$
\begin{equation}\label{eg}
\lvert e^{t\mathcal{S}^{-1} \mathcal{L}} (r_{0},v_{0}) \rvert_{H^{1} \times L^{2}} \leq e^{\omega t} \lvert (r_{0},v_{0}) \rvert_{H^{1} \times L^{2}}.
\end{equation}
Furthermore, $e^{t\mathcal{S}^{-1} \mathcal{L}}$ is a $\mathcal{C}^{0}$-semigroup on $\mathcal{H}_{1,0}$.
\end{lemma}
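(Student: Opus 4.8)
The plan is to realize $\mathcal{S}^{-1}\mathcal{L}$ as the generator of a $\mathcal{C}^0$-semigroup on $\mathcal{H}_{1,0}$ by a Hille--Yosida argument, and then to identify that semigroup with the restriction to $\mathcal{H}_{1,0}$ of the $L^2$-semigroup $e^{t\mathcal{S}^{-1}\mathcal{L}}$ of Lemma \ref{gen_C0_semigroup_L2}. First I would fix the natural domain $\mathcal{D}_0:=\left\{(r,v)\in H^2\times H^2\text{, }r(0)=v(0)=v(1)=0\right\}$, which is precisely the set of $(r,v)\in\mathcal{D}(\mathcal{L})$ with $\mathcal{S}^{-1}\mathcal{L}(r,v)\in H^1\times L^2$, check that it is dense in $\mathcal{H}_{1,0}$, and note that $\left(\mathcal{S}^{-1}\mathcal{L},\mathcal{D}_0\right)$ is closed on $\mathcal{H}_{1,0}$ with compact resolvent. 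Closedness and discreteness of the spectrum come essentially for free from Proposition \ref{compact_inverse_L}: inspection of the inversion formula in its proof shows that $\mathcal{L}^{-1}$ is bounded from $H^1\times L^2$ into $H^2\times H^2$, so $\left(\mathcal{S}^{-1}\mathcal{L}\right)^{-1}$ is a compact operator on $\mathcal{H}_{1,0}$.

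The core of the argument is a dissipative resolvent estimate on $\mathcal{H}_{1,0}$: there exists $\omega>0$ such that
\begin{equation*}
\lver (r,v)\rver_{H^1\times L^2}\leq\frac{1}{\lambda-\omega}\,\lver\left(\lambda-\mathcal{S}^{-1}\mathcal{L}\right)(r,v)\rver_{H^1\times L^2}\text{ for all }\lambda>\omega\text{ and }(r,v)\in\mathcal{D}_0.
\end{equation*}
Writing $(f,g)=\left(\lambda\mathcal{S}-\mathcal{L}\right)(r,v)$, the control of $\lver r\rver_2$ and $\lver v\rver_2$ is obtained exactly as in \eqref{o}, by pairing the two equations of \eqref{linear_steady} with $r$ and $v$ and keeping the favourable boundary term in $\lver r(1)\rver^2$. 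The new and delicate point is to bound $\lver r_x\rver_2$ using only the $H^1\times L^2$-norm of $(f,g)$ --- in particular without the quantity $\lver g_x\rver_2$ that was available in \eqref{t}. For this I would differentiate the first equation of \eqref{linear_steady}, test it against $\phi_1\overline{r_x}$ with a Goodman-type weight $\phi_1=P'(\hat\rho)\phi_2$ satisfying $\tfrac{1}{2}(\hat u\phi_1)_x-2\hat u_x\phi_1<0$ as in Proposition \ref{high_freq_estimate}, and substitute the second equation of \eqref{linear_steady} for $\nu v_{xx}$. The choice $\phi_1=P'(\hat\rho)\phi_2$ cancels the $\overline{r_x}v_{xx}$ cross term; the weight condition produces a strictly negative contribution $\int\bigl(\tfrac{1}{2}(\hat u\phi_1)_x-2\hat u_x\phi_1\bigr)\lver r_x\rver^2$; and the substitution adds the further damping $-\tfrac{1}{\nu}\int\hat\rho\,P'(\hat\rho)\phi_1\lver r_x\rver^2$, at the price of a term proportional to $\lambda\int\hat\rho^2\phi_1\,\overline{r_x}\,v$ which is reabsorbed, after one integration by parts in $x$ (using $v(0)=v(1)=0$) and a use of the first equation $\lambda r=f-(\hat\rho v+\hat u r)_x$, into the dissipation and the lower-order terms. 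The boundary terms at $x=0$ are harmless because $r(0)=v(0)=0$ forces $(\hat\rho v+\hat u r)_x(0)=f(0)$ and $r_x(0)=\tfrac{1}{u_0}\bigl(f(0)-\rho_0 v_x(0)\bigr)$, and at $x=1$ the outflow condition $\hat u(1)=u_1>0$ makes the leading boundary contribution $-\tfrac{1}{2} u_1\phi_1(1)\lver r_x(1)\rver^2$ of the right sign; the remaining boundary and $v_x$ terms are absorbed using \eqref{boundary_equalities}, Lemma \ref{Linfty_controls}, Lemma \ref{high_frequency_control} and Young's inequality. I expect this weighted bookkeeping --- carrying just enough coercivity on $r_x$ while never requiring a derivative of $g$ or control of $v_{xx}$ in $L^2$ --- to be the main obstacle; it is exactly the ``deceptively difficult'' point noted in the introduction.

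Granting the resolvent estimate, the conclusion is routine. Since $\left(\mathcal{S}^{-1}\mathcal{L},\mathcal{D}_0\right)$ is closed and densely defined on $\mathcal{H}_{1,0}$, has $(\omega,\infty)$ in its resolvent set, and satisfies $\lver\left(\lambda-\mathcal{S}^{-1}\mathcal{L}\right)^{-1}\rver_{\mathcal{L}(\mathcal{H}_{1,0})}\leq(\lambda-\omega)^{-1}$ for $\lambda>\omega$ --- whence $\lver\left(\lambda-\mathcal{S}^{-1}\mathcal{L}\right)^{-n}\rver_{\mathcal{L}(\mathcal{H}_{1,0})}\leq(\lambda-\omega)^{-n}$ --- the Hille--Yosida theorem produces a $\mathcal{C}^0$-semigroup $S(t)$ on $\mathcal{H}_{1,0}$ with $\lver S(t)\rver_{\mathcal{L}(\mathcal{H}_{1,0})}\leq e^{\omega t}$. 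Finally, the generators of $S(t)$ and of $e^{t\mathcal{S}^{-1}\mathcal{L}}$ agree on $\mathcal{D}_0$, which is a core for $\mathcal{S}^{-1}\mathcal{L}$ on $L^2$ and is dense in $\mathcal{H}_{1,0}$; since $\mathcal{H}_{1,0}\hookrightarrow L^2$ continuously, for $(r_0,v_0)\in\mathcal{D}_0$ both $t\mapsto S(t)(r_0,v_0)$ and $t\mapsto e^{t\mathcal{S}^{-1}\mathcal{L}}(r_0,v_0)$ solve the same abstract Cauchy problem in $L^2$, so they coincide, and by density and boundedness $e^{t\mathcal{S}^{-1}\mathcal{L}}$ leaves $\mathcal{H}_{1,0}$ invariant and agrees with $S(t)$ there. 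This yields both \eqref{eg} and the fact that $e^{t\mathcal{S}^{-1}\mathcal{L}}$ is a $\mathcal{C}^0$-semigroup on $\mathcal{H}_{1,0}$.
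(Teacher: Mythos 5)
Your route is genuinely different from the paper's. The paper does not go through a resolvent estimate and Hille--Yosida at all: it takes the semigroup already constructed on $\mathcal{H}_{1}$ in Lemma \ref{gen_C0_semigroup_L2}, picks smooth data, and differentiates in time the compensated energy $\mathcal{E}(r,v)=\frac{A}{2}\bigl(\lvert r\rvert_2^2+(\hat\rho v,v)_2\bigr)+(v,\hat\rho^2 r_x)_2+\frac{\nu}{2}\lvert r_x\rvert_2^2$. The whole point is the Kawashima-type cross term $(v,\hat\rho^2 r_x)_2$: its time derivative produces $+(\nu v_{xx},\hat\rho r_x)_2$, which exactly cancels the $-(\nu\hat\rho v_{xx},r_x)_2$ coming from $\frac{d}{dt}\frac{\nu}{2}\lvert r_x\rvert_2^2$ after substituting $r_t=-(\hat\rho v+\hat u r)_x$; the remaining second-derivative terms are removed by integration by parts using $v(0)=v(1)=0$ and the good sign of $-\frac{\nu}{2}u_1 r_x(1)^2$, and everything else is absorbed into $-A\nu\lvert v_x\rvert_2^2$ or bounded by $\mathcal{E}$. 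This yields $\frac{d}{dt}\mathcal{E}\le 2\omega\mathcal{E}$ directly, with \eqref{eg} by Gronwall and the $\mathcal{C}^0$ property on $\mathcal{H}_{1,0}$ by density from $\mathcal{H}_1$. It is considerably shorter than a quantitative resolvent estimate, and it sidesteps the need to track the precise $(\lambda-\omega)^{-1}$ dependence.

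Your plan could in principle be made to work, but as written it has two concrete gaps. First, the domain: $\mathcal{S}^{-1}\mathcal{L}$ does not map your $\mathcal{D}_0$ into $\mathcal{H}_{1,0}$, because the first component $-(\hat\rho v+\hat u r)_x$ need not vanish at $x=0$; the generator's domain in $\mathcal{H}_{1,0}$ must carry the additional compatibility condition $\rho_0 v_x(0)+u_0 r_x(0)=0$ (which the resolvent range satisfies automatically since $f(0)=0$ for $(f,g)\in\mathcal{H}_{1,0}$), and one must then re-check density of this smaller set. Without this, $(\lambda-\mathcal{S}^{-1}\mathcal{L},\mathcal{D}_0)$ is not an operator on $\mathcal{H}_{1,0}$ and Hille--Yosida does not apply. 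Second, and more seriously, you propose to close the estimate for $\lvert r_x\rvert_2$ by invoking Lemma \ref{high_frequency_control}; but its second and third inequalities require $\lvert(\lambda\mathcal{S}-\mathcal{L})(r,v)\rvert_{H^1}$, i.e. $\lvert g_x\rvert_2$, and control of $\lvert v_{xx}\rvert_2$ --- exactly the quantities the $H^1\times L^2$ framework exists to avoid, so using them would be circular. The genuinely delicate point is the boundary contribution $\frac{1}{2}u_0\phi_1(0)\lvert r_x(0)\rvert^2$ with $r_x(0)=u_0^{-1}(f(0)-\rho_0 v_x(0))$: to bound $\lvert v_x(0)\rvert^2$ you must eliminate $v_{xx}$ through the second equation of \eqref{linear_steady} and then kill the resulting $\lambda\lvert v\rvert_2\lvert v_x\rvert_2$ term using the $L^2$ bound $\lambda\lvert(r,v)\rvert_2\le C\lvert(r,v)\rvert_2+\lvert(f,g)\rvert_2$ together with absorption of $\lvert v_x\rvert_2^2$ into the $-\nu\lvert v_x\rvert_2^2$ dissipation from the $L^2$ part of the energy. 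Until that bookkeeping is carried out with the correct $(\lambda-\omega)^{-1}$ constant, the quasi-dissipativity estimate --- the heart of your argument --- is not established.
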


\begin{proof}
We argue by density and we take $(r_{0},v_{0}) \in \mathcal{H}_{2}$.  In the following, we denote $(r(t),v(t))=e^{t\mathcal{S}^{-1} \mathcal{L}} (r_{0},v_{0})$. Notice that we have $r(t,0)=v(t,0)=v(t,1)=r_{x}(t,0)=0$ for any $t\geq 0$.  We define the energy
\begin{equation*}
\mathcal{E}(r,v)= \frac{A}{2} \left( \lvert r \rvert_{2}^{2} + \left( \hat{\rho} v, v \right)_{2} \right) + \left(v, \hat{\rho}^{2} r_{x} \right)_{2} + \frac{\nu}{2} \lver r_{x} \rver_{2}^{2}.
\end{equation*}
In the following we take $A>0$ large enough. In particular, $\mathcal{E}$ is equivalent to the $H^{1} \times L^{2}$-norm. We get
\begin{equation*}
\begin{aligned}
\frac{d}{dt} \mathcal{E}(r,v) &\leq \nu A \left(v_{xx} , v \right)_{2}  + AC \lver (r,v) \rver_{2} \lver (r,v) \rver_{H^{1}} + \left( \nu v_{xx} , \hat{\rho} r_{x} \right)_{2} - \left( v , \hat{\rho}^{3} v_{xx} \right)_{2}\\
&\hspace{0.45cm} - \left( v , \hat{\rho}^{2} \hat{u} r_{xx} \right)_{2} - \left( \nu \hat{\rho} v_{xx} , r_{x} \right)_{2} - \nu \left(\hat{u} r_{xx}, r_{x} \right)_{2}+ C\left( \lver v \rver_{2} +  \lver r_{x} \rver_{2} \right)  \lver (r,v) \rver_{H^{1}}\\
&\leq - A \nu \lver v_{x} \rver_{2}^{2} + \left( v_{x} , \hat{\rho}^{3} v_{x} \right)_{2} + \frac{\nu}{2} \left(\hat{u}_{x} r_{x}, r_{x} \right)_{2} + C\left( A \lver (r,v) \rver_{2} + \lver r_{x} \rver_{2} \right) \lver (r,v) \rver_{H^{1}}\\
\end{aligned}
\end{equation*}
where  we have used cancellation of the highest-order terms $\pm \left( \nu v_{xx} , \hat{\rho} r_{x} \right)_{2} $, we have integrated by parts and we have noticed a good sign for $r_{x}(1)^{2}$. Then, applying Young's inequality, we obtain for some $\omega>0$ large enough,
\begin{equation*}
\frac{d}{dt} \mathcal{E}(r,v) \leq  2 \omega \left(  \lvert r \rvert_{2}^{2} + \lvert v \rvert_{2}^{2} + \lvert r_{x} \rvert_{2}^{2} \right) \leq  2\omega  \mathcal{E}.
\end{equation*}
The inequality \eqref{eg} follows easily. 
Finally for any $U_{0} \in \mathcal{H}_{1,0}$ and $V_{0} \in \mathcal{H}_{1}$
\begin{equation*}
\lver e^{t\mathcal{S}^{-1} \mathcal{L}} U_{0} - U_{0} \rver_{H^{1} \times L^{2}} \leq \lver e^{t\mathcal{S}^{-1} \mathcal{L}} V_{0} - V_{0} \rver_{H^{1}} + (1+e^{\omega t}) \lver U_{0}-V_{0} \rver_{H^{1} \times L^{2}}
\end{equation*}
and continuity at $t=0$ follows since $e^{t\mathcal{S}^{-1} \mathcal{L}}$ is continuous at $t=0$ on $\mathcal{H}_{1}$ and $\mathcal{H}_{1}$  is dense in $\mathcal{H}_{1,0}$.
\end{proof}

The following proposition gives linear exponential stability under the assumption of a spectral gap. It is the main result of this subsection.

\begin{prop}\label{pruss_thm}
Assume that $P$ satisfies \eqref{pressure_cond}. Assume that there exists a constant $\alpha>0$, such that $\Re \sigma(\mathcal{S}^{-1} \mathcal{L}) < - \alpha$. Then, there exists $\theta$ and $C$,   $0 < \theta < \alpha$, such that  for any $(r_{0},v_{0}) \in \mathcal{H}_{1,0}$
\begin{equation*}
\lver e^{t \mathcal{S}^{-1} \mathcal{L}}(r_{0},v_{0}) \rver_{H^{1} \times L^{2}} \leq C e^{-\theta t} \lver (r_{0},v_{0}) \rver_{H^{1} \times L^{2}}.
\end{equation*}
\end{prop}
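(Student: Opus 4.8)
The plan is to deduce the exponential decay estimate from a Gearhart--Pr\"uss / Hille-Yosida-Phillips type argument, using the semigroup bound \eqref{eg} already established together with the high-frequency resolvent estimate of Proposition \ref{high_freq_estimate} and the fact (Proposition \ref{compact_inverse_L}) that the spectrum of $\mathcal{S}^{-1}\mathcal{L}$ consists only of eigenvalues. Since the problem is posed on a Hilbert space, the natural tool is the Gearhart--Pr\"uss theorem: a $\mathcal{C}^0$-semigroup $e^{tB}$ on a Hilbert space satisfies $\lver e^{tB}\rver \leq Ce^{-\theta t}$ provided the half-plane $\{\Re\lambda > -\theta\}$ is contained in the resolvent set and $\sup_{\Re\lambda>-\theta}\lver (\lambda - B)^{-1}\rver < \infty$. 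So the task reduces to proving a uniform resolvent bound on a half-plane $\Re\lambda > -\theta$ for $B = \mathcal{S}^{-1}\mathcal{L}$ acting on $\mathcal{H}_{1,0}$.

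First I would fix $\alpha>0$ from the spectral gap hypothesis and let $\alpha' = \min(\alpha, \alpha_0)$ where $\alpha_0$ is the constant from Proposition \ref{high_freq_estimate}; I will eventually take $\theta$ slightly smaller than $\alpha'$. The resolvent bound splits into two frequency regimes. For $|\lambda|$ large with $\Re\lambda > -\alpha'$, Proposition \ref{high_freq_estimate} directly gives $\lver (r,v)\rver_{H^1} \leq C\lver(\lambda\mathcal{S} - \mathcal{L})(r,v)\rver_{H^1\times L^2}$; since $\mathcal{S}$ is bounded and boundedly invertible, and $(\lambda\mathcal{S}-\mathcal{L})(r,v) = \mathcal{S}(\lambda - \mathcal{S}^{-1}\mathcal{L})(r,v)$, this converts to $\lver(r,v)\rver_{H^1}\leq C\lver(\lambda - \mathcal{S}^{-1}\mathcal{L})(r,v)\rver_{H^1\times L^2}$ — but note Proposition \ref{high_freq_estimate} controls only the $H^1\times L^2$ norm of the right-hand side, which is exactly the norm of $\mathcal{H}_{1,0}$, so this is the correct pairing for the semigroup on $\mathcal{H}_{1,0}$. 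This gives surjectivity of $\lambda - \mathcal{S}^{-1}\mathcal{L}$ for such $\lambda$ (combined with a standard continuity/connectedness argument from large real $\lambda$ where invertibility is already known from Lemma \ref{gen_C0_semigroup_L2}) together with the uniform bound $\lver(\lambda-\mathcal{S}^{-1}\mathcal{L})^{-1}\rver_{\mathcal{H}_{1,0}\to\mathcal{H}_{1,0}}\leq C$. For the complementary bounded region $\{\Re\lambda > -\alpha',\ |\lambda| \leq R\}$, this set is compact and, by the spectral gap assumption together with Proposition \ref{compact_inverse_L} (spectrum $=$ point spectrum, discrete), contains no spectrum; hence $\lambda\mapsto(\lambda-\mathcal{S}^{-1}\mathcal{L})^{-1}$ is continuous there and thus bounded. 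Combining the two regimes yields a uniform resolvent bound on $\{\Re\lambda > -\theta\}$ for any $\theta < \alpha'$, and Gearhart--Pr\"uss then gives the claimed decay.

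The main obstacle I anticipate is matching the function-space bookkeeping between Proposition \ref{high_freq_estimate} and the semigroup on $\mathcal{H}_{1,0}$. Proposition \ref{high_freq_estimate} is stated for $(r,v)\in H^1\times H^1$ with the three boundary conditions, whereas $\mathcal{H}_{1,0}$ only imposes $r(0)=0$ and asks $(r,v)\in H^1\times L^2$; one must check that the resolvent of $\mathcal{S}^{-1}\mathcal{L}$ on $\mathcal{H}_{1,0}$ actually maps into the smaller space where the Goodman-type estimate applies (i.e., that solving $(\lambda - \mathcal{S}^{-1}\mathcal{L})(r,v) = (f,g)$ with $(f,g)\in\mathcal{H}_{1,0}$ produces $v\in H^1$ with $v(0)=v(1)=0$), which is where the smoothing of the parabolic $v$-equation and the structure of the elliptic resolvent problem must be invoked — this is presumably the content foreshadowed by Remark \ref{H1XL2_explanation} and the discussion of the ``precisely chosen space''. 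A secondary point is that Gearhart--Pr\"uss requires boundedness of the resolvent uniformly up to the boundary line $\Re\lambda = -\theta$, not merely on the open half-plane; choosing $\theta$ strictly less than $\alpha'$ ensures we stay away from both the essential-growth threshold of Proposition \ref{high_freq_estimate} and the spectral gap, so a simple compactness-plus-continuity argument closes this. Once these points are settled, the rest is routine.
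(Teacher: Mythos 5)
Your proposal is correct and follows essentially the same route as the paper: the paper likewise obtains a uniform resolvent bound in the $H^{1}\times L^{2}$ norm on $\Re(\lambda)=-\theta$ from Proposition \ref{high_freq_estimate} (with the bounded-frequency region handled implicitly via the spectral gap and the discreteness of the spectrum from Proposition \ref{compact_inverse_L}), and then concludes by Pr\"uss' theorem and density of $\mathcal{H}_{1}$ in $\mathcal{H}_{1,0}$. Your extra care about the bounded-frequency regime, the half-plane versus vertical-line issue, and the mapping properties of the resolvent merely makes explicit what the paper compresses into its citation of Pr\"uss' theorem.
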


\begin{proof}
If $(r_{0},v_{0}) \in \mathcal{H}_{1}$, Proposition \ref{high_freq_estimate} gives two constants $C$ and $\theta$ with $0 < \theta < \alpha$ such that for any $\lambda \in \mathbb{C}$ satisfying $\mathcal{R}(\lambda) = - \theta$
\begin{equation*}
\lver \left(\lambda - \mathcal{S}^{-1} \mathcal{L} \right)^{-1} (r_{0},v_{0}) \rver_{H^{1} \times L^{2}} \leq C \lver (r_{0},v_{0}) \rver_{H^{1} \times L^{2}}.
\end{equation*}
\noindent The result follows by density and Pr\"uss' theorem (see for instance \cite{Pruss_theorem,Yosida_book}).

\end{proof}

\section{Spectral stability}\label{section_spectral_stab}

\subsection{Constant and almost constant states}

First, we study the spectral stability of constant states.

\begin{prop}\label{spec_stab_constant}
Assume that $\left(\hat{\rho},\hat{u} \right)$ is a constant solution of \eqref{NS} and that $P$ satisfies \eqref{pressure_cond}. Then, there exists $\alpha>0$, $\Re \sigma(\mathcal{S}^{-1} \mathcal{L}) \leq - \alpha$.
\end{prop}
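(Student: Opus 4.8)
The plan is to combine a basic Friedrichs-type energy identity for the resolvent equation, which excludes eigenvalues with nonnegative real part, with the high-frequency resolvent bound of Proposition~\ref{high_freq_estimate} and the discreteness of $\sigma(\mathcal S^{-1}\mathcal L)$ from Proposition~\ref{compact_inverse_L}, in order to extract a uniform spectral gap.

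First I would show $\sigma(\mathcal S^{-1}\mathcal L)\cap\{\Re\lambda\ge 0\}=\emptyset$. Let $(r,v)$ be an eigenfunction associated with $\lambda$; since $(\hat\rho,\hat u)$ is constant it solves the reduced system \eqref{linear_constant} together with $r(0)=v(0)=v(1)=0$. Pair the first equation with $\tfrac{P'(\hat\rho)}{\hat\rho}\,\overline r$ and the second with $\overline v$ in $L^2(0,1)$, integrate by parts, and add. Using $v(0)=v(1)=0$, the viscous term produces $-\nu\lver v_x\rver_2^2$ and $\Re\int_0^1 v_x\overline v=0$; using $r(0)=0$, the transport term $\hat u\tfrac{P'(\hat\rho)}{\hat\rho}\int_0^1 r_x\overline r$ contributes $\tfrac{\hat u P'(\hat\rho)}{2\hat\rho}\lver r(1)\rver^2\ge 0$; and the cross terms $P'(\hat\rho)\int_0^1(v_x\overline r+r_x\overline v)$ are purely imaginary since $[r\overline v]_0^1=0$. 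Taking real parts yields
\[
\Re\lambda\Big(\tfrac{P'(\hat\rho)}{\hat\rho}\lver r\rver_2^2+\hat\rho\lver v\rver_2^2\Big)+\tfrac{\hat u P'(\hat\rho)}{2\hat\rho}\lver r(1)\rver^2+\nu\lver v_x\rver_2^2=0 .
\]
If $\Re\lambda\ge 0$, all terms are nonnegative, hence $v_x\equiv 0$, so $v\equiv 0$ (as $v(0)=0$); the first equation of \eqref{linear_constant} then reduces to $\lambda r+\hat u r_x=0$ with $r(0)=0$, forcing $r\equiv 0$. Thus no such eigenfunction exists.

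To upgrade this to a gap, recall that by Proposition~\ref{high_freq_estimate} there are $\alpha_0>0$ and $R_0>0$ such that for $\Re\lambda>-\alpha_0$ and $\lver\lambda\rver>R_0$ the operator $\lambda\mathcal S-\mathcal L$ is injective on $\mathcal D(\mathcal L)$, hence --- $\sigma(\mathcal S^{-1}\mathcal L)$ consisting of eigenvalues only by Proposition~\ref{compact_inverse_L} --- $\lambda\notin\sigma(\mathcal S^{-1}\mathcal L)$. Since $\mathcal L^{-1}$ is compact and $\mathcal S$ is bounded with bounded inverse, $(\mathcal S^{-1}\mathcal L)^{-1}=\mathcal L^{-1}\mathcal S$ is compact, so $\sigma(\mathcal S^{-1}\mathcal L)$ is a discrete set of finite-multiplicity eigenvalues accumulating only at infinity; in particular the compact region $\{\Re\lambda\ge-\alpha_0\}\cap\{\lver\lambda\rver\le R_0\}$ contains at most finitely many of them, and by the previous step each has $\Re\lambda<0$. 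Setting $-\alpha_1<0$ to be their maximal real part (or $\alpha_1=\alpha_0$ if this set is empty) and $\alpha:=\min(\alpha_0,\alpha_1)$, every eigenvalue satisfies $\Re\lambda\le-\alpha$, which is the assertion.

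The energy identity itself is routine and is further simplified here by $\hat u_x\equiv 0$. The one point requiring care is the passage from ``no unstable eigenvalues'' to a quantitative gap: this is exactly where Propositions~\ref{compact_inverse_L} and~\ref{high_freq_estimate} are essential, the first providing discreteness and the second confining the spectrum to $\{\Re\lambda\le-\alpha_0\}$ outside a compact set, after which only a finiteness argument remains.
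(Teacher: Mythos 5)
Your proof is correct and follows essentially the same route as the paper: the same Friedrichs/symmetrizer energy identity (the paper's weights differ only by the constant factor $\hat\rho$) ruling out eigenvalues with $\Re\lambda\ge 0$, combined with Proposition~\ref{compact_inverse_L} and Proposition~\ref{high_freq_estimate} to convert this into a uniform gap. You merely spell out two steps the paper leaves implicit, namely that $v_x\equiv 0$ forces $(r,v)\equiv 0$ and the finiteness argument on the compact region.
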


\begin{proof}
Computing $\Re \left( \left(\eqref{linear_constant}_{1}, P'(\hat{\rho}) \overline{r} \right)_{L^{2}} + \left(\eqref{linear_constant}_{2}, \hat{\rho} \overline{v} \right)_{L^{2}}\right)$, we get 
\begin{equation*}
\Re(\lambda) \left( \lver \sqrt{P'(\hat{\rho})} r \rver^{2}_{2} + \lver \sqrt{\hat{\rho}} v \rver^{2}_{2} \right) + \nu \lver v_{x} \rver_{2}^{2} + \frac{1}{2} P'(\hat{\rho}(1)) u_{1} \lver r(1) \rver^{2} = 0.
\end{equation*}
Thus, $\Re(\lambda) < 0$. The result follows from Proposition \ref{compact_inverse_L} and Proposition \ref{high_freq_estimate}.
\end{proof}
We can now establish the main proposition of this part.  We recall that $\left(\hat{\rho}, \hat{u} \right)$ is a steady solution of \eqref{NS}-\eqref{BC}. We introduce the Evans function associated to $\left(\hat{\rho}, \hat{v} \right)$
\begin{equation*}
\mathcal{D} \left[\rho_{0},u_{0},u_{1} \right] (\lambda) = v(1),
\end{equation*}
where $\left(\rho,v \right)$ satisfies the ordinary differential equation 
\begin{equation}\label{eq_diff_evans_func}
\left\{
\begin{aligned}
&r_{x} = -\frac{\lambda r + \left( \hat{\rho} v \right)_{x} + \hat{u}_{x} r}{\hat{u}},\\
&\nu v_{xx} = \lambda \hat{\rho} v + \left(\hat{\rho} \hat{u} v \right)_{x} + P''(\hat{\rho}) \hat{\rho}_{x} r - P'(\hat{\rho}) \frac{\lambda r + \left( \hat{\rho} v \right)_{x} +\hat{u}_{x} r}{\hat{u}}  + \hat{u}_{x} \left(\hat{u} r + \hat{\rho} v \right)
\end{aligned}
\right.
\end{equation}
with
\begin{equation*}
r(0) = v(0) = 0 \text{ ,  }  v'(0)=1.
\end{equation*}
One can easily show that $\mathcal{D} \left[\rho_{0},u_{0},u_{1} \right](\lambda)=0$ if and only if $\lambda$ is an eigenvalue of \eqref{linear_steady}. We now establish the spectral stability of almost constant steady solutions of \eqref{NS}.

\begin{prop}
Assume that $P$ satisfies \eqref{pressure_cond}. Let $\left(\hat{\rho},\hat{u} \right)$ be the unique steady solution of \eqref{NS}-\eqref{BC}. Let $\epsilon>0$ be small enough. Then an eigenvalue $\lambda$ of \eqref{linear_steady}-\eqref{BC_linear_steady} has a negative real part.
\end{prop}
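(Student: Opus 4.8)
The plan is a perturbation argument off the constant state $u_1=u_0$, combining the high-frequency resolvent bound of Proposition \ref{high_freq_estimate} with continuity of the Evans function $\mathcal{D}[\rho_0,u_0,u_1](\lambda)$ in the parameter $u_1$. First I would fix $\rho_0,u_0$ and record that all quantities entering the eigenvalue problem and its estimates — $\hat{\rho},\hat{u},\hat{\rho}_x,\hat{u}_x$, $1/\hat{u}$ and their $L^\infty$-norms — are controlled uniformly for $|u_1-u_0|\le \epsilon_0$ with $\epsilon_0$ small: by Lemma \ref{control_almost_constant_state} one has $\hat{\rho}(0)=\rho_0$, $\hat{\rho}\hat{u}\equiv\rho_0u_0$ and $|\hat{\rho}_x|_\infty+|\hat{u}_x|_\infty\to 0$ as $u_1\to u_0$, so $(\hat{\rho},\hat{u})\to(\rho_0,u_0)$ in $C^1$ and in particular $\hat{u}\ge u_0/2>0$ throughout $[0,1]$ for $\epsilon_0$ small. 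Consequently the constant $\alpha=\alpha_1>0$ and the threshold $R>0$ (``$|\lambda|$ large enough'') in Proposition \ref{high_freq_estimate} can be taken independent of $u_1$ in this range. Applying the contrapositive of that proposition to an eigenfunction — for which $(\lambda\mathcal{S}-\mathcal{L})(r,v)=0$, forcing $|(r,v)|_{H^1}=0$, a contradiction — shows that every eigenvalue $\lambda$ of \eqref{linear_steady}-\eqref{BC_linear_steady} with $|\lambda|>R$ satisfies $\Re\lambda\le-\alpha_1<0$. It remains only to control eigenvalues in the compact set $|\lambda|\le R$, $\Re\lambda\ge 0$.

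Next I would invoke Proposition \ref{spec_stab_constant}: for the constant state ($u_1=u_0$) there is $\alpha_2>0$ with $\Re\sigma(\mathcal{S}^{-1}\mathcal{L})\le-\alpha_2$, so — using that the zeros of $\lambda\mapsto\mathcal{D}[\rho_0,u_0,u_0](\lambda)$ are exactly the eigenvalues — $\mathcal{D}[\rho_0,u_0,u_0]$ has no zero in the compact set $K:=\{\,|\lambda|\le R,\ \Re\lambda\ge-\tfrac{1}{2}\min(\alpha_1,\alpha_2)\,\}$, whence $c:=\min_{\lambda\in K}|\mathcal{D}[\rho_0,u_0,u_0](\lambda)|>0$. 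Since $\hat{\rho}$ depends continuously on $u_1$ (through the constant $b$, which depends continuously on $u_1$ by continuity of the map $\phi$ as used in Lemma \ref{control_almost_constant_state}), the coefficients of the ODE \eqref{eq_diff_evans_func} depend continuously on $u_1$ uniformly on $[0,1]$, and by continuous dependence of ODE solutions on parameters $\mathcal{D}[\rho_0,u_0,u_1](\lambda)\to\mathcal{D}[\rho_0,u_0,u_0](\lambda)$ uniformly for $\lambda\in K$ as $u_1\to u_0$. Hence for $\epsilon$ small enough, $\sup_{\lambda\in K}|\mathcal{D}[\rho_0,u_0,u_1](\lambda)-\mathcal{D}[\rho_0,u_0,u_0](\lambda)|<c/2$ whenever $|u_1-u_0|\le\epsilon$, so $\mathcal{D}[\rho_0,u_0,u_1]$ has no zero in $K$ either.

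Putting the two pieces together: an eigenvalue $\lambda$ of the almost-constant steady state with $|\lambda|>R$ has $\Re\lambda\le-\alpha_1$, while one with $|\lambda|\le R$ lies outside $K$ and hence has $\Re\lambda<-\tfrac{1}{2}\min(\alpha_1,\alpha_2)$; in all cases $\Re\lambda<0$ (and in fact the argument delivers the uniform spectral gap $\Re\sigma(\mathcal{S}^{-1}\mathcal{L})<-\tfrac{1}{2}\min(\alpha_1,\alpha_2)$, which is what is needed to later apply Proposition \ref{pruss_thm}). The main obstacle — beyond bookkeeping — is the uniformity claimed in the first paragraph: one must check that the energy estimates behind Proposition \ref{high_freq_estimate} depend only on the uniformly controlled quantities of Lemma \ref{control_almost_constant_state}, so that the ``high-frequency'' region $|\lambda|>R$ is common to all nearby steady states; once that is secured, the remaining content is the elementary fact that the finitely many eigenvalues left in a fixed compact set move continuously with $u_1$ and start out strictly in the open left half-plane.
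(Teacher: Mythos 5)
Your proposal is correct and follows essentially the same route as the paper: eigenvalues of large modulus are excluded by the high-frequency estimate of Proposition \ref{high_freq_estimate}, and eigenvalues in the remaining compact set are excluded by combining the spectral stability of the constant state (Proposition \ref{spec_stab_constant}) with continuous dependence of the Evans function $\mathcal{D}[\rho_0,u_0,u_1]$ on $u_1$. The only difference is that you spell out two points the paper leaves implicit — the uniformity in $u_1$ of the constants in the high-frequency estimate (via Lemma \ref{control_almost_constant_state}) and the choice of a compact set reaching slightly into the left half-plane so as to obtain a uniform spectral gap — both of which are correct and strengthen the argument.
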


\begin{proof}
By Proposition \ref{high_freq_estimate}, problem \eqref{linear_steady}-\eqref{BC_linear_steady} does not have any eigenvalue of nonnegative real part outside a compact set $K$. Furthermore, from Proposition \ref{spec_stab_constant}, $\mathcal{D}\left[\rho_{0},u_{0},u_{0} \right]$ does not have any zero inside $K \cap \left\lbrace \Re > 0  \right\rbrace$. Since the Evans function $\mathcal{D}$ depends continuously on the boundary conditions, $\mathcal{D}\left[\rho_{0},u_{0},u_{1} \right]$ never vanishes inside $K \cap \left\lbrace \Re > 0  \right\rbrace$  for $\epsilon$ small enough.
\end{proof}

\subsection{About general steady states}

In the previous part, we only prove the spectral stability of almost constant states. In this part, we show some theoretical and numerical arguments that support the spectral stability of any steady states. 

\medskip

We know from previous works that the stability index criterion is a necessary condition for the spectral stability (see for instance \cite{gardner_jones_stab_ind,pego_weinstein_stab_ind,gardner_zumbrun_gap_lemma}). The stability index criterion states that
\begin{equation*}
\sgn \left(\mathcal{D} \left[\rho_{0},u_{0},u_{1} \right](0) \right) \sgn \left( \mathcal{D} \left[\rho_{0},u_{0},u_{1} \right](+ \infty) \right) = 1.
\end{equation*}
The following proposition shows that this criterion is satisfied.

\begin{prop}
For all steady states of problem \eqref{NS}-\eqref{BC}, the stability index criterion is satisfied.
\end{prop}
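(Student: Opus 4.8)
The plan is to compute the two sign quantities $\sgn \mathcal{D}[\rho_0,u_0,u_1](0)$ and $\sgn \mathcal{D}[\rho_0,u_0,u_1](+\infty)$ explicitly and check that their product is $1$. First I would treat $\lambda = 0$. Setting $\lambda=0$ in the defining ODE \eqref{eq_diff_evans_func}, the first equation reads $(\hat u r)_x = -(\hat\rho v)_x - \hat u_x r$, equivalently $(\hat u r + \hat\rho v)_x + \hat u_x r - \hat u_x r \cdots$; more simply, at $\lambda = 0$ one recognizes that the linearized steady system has explicit solutions coming from the fact that the steady ODE itself has a one-parameter family of solutions. Concretely, differentiating the profile relations $\hat\rho\hat u \equiv m$ and $m\hat u + P(\hat\rho) - \nu\hat u_x \equiv b$ with respect to the parameter $b$ (or $m$) gives a basis of solutions of the $\lambda=0$ linearized equation on $[0,1]$ \emph{without} boundary conditions. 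I would pick the particular combination that satisfies $r(0)=v(0)=0$, $v'(0)=1$, express $v(1)$ through the profile quantities, and read off its sign using the monotonicity information from Lemma \ref{control_almost_constant_state} (namely the signs of $\hat\rho_x$, $\hat u_x$ and the sign of $u_1-u_0$). Since $b\mapsto\hat\rho(1)$ is increasing (Proposition \ref{existence_steady_sol}), the derivative of the profile with respect to $b$ has a definite sign at $x=1$, and this pins down $\sgn\mathcal{D}(0)$.

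Next I would treat $\lambda\to+\infty$ along the real axis. Here the ODE \eqref{eq_diff_evans_func} becomes singularly perturbed: the dominant balance in the first equation is $r_x \approx -\lambda r/\hat u$, and in the second $\nu v_{xx}\approx \lambda\hat\rho v + \lambda\hat\rho\hat u v/\hat u \cdots$ — more carefully, substituting the leading behavior of $r$ one finds $r$ decays like $e^{-\lambda x/\hat u(0)}$ near $x=0$ so that, away from a boundary layer at $x=0$, $r$ is exponentially negligible and $v$ solves essentially $\nu v_{xx} \approx \lambda\hat\rho v + \lambda(\hat\rho\hat u/\hat u)\cdots$. The net effect is that for large real $\lambda$, $v$ grows monotonically from $v(0)=0$ with $v'(0)=1>0$, so $v(1)>0$ and $\sgn\mathcal{D}(+\infty) = +1$. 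This is exactly the kind of WKB/tracking estimate standard in Evans function asymptotics; I would cite \cite{gardner_zumbrun_gap_lemma} for the relevant high-frequency tracking lemma and carry out only the leading-order computation to fix the sign.

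Having both signs, the conclusion $\sgn\mathcal{D}(0)\cdot\sgn\mathcal{D}(+\infty)=1$ should follow because the $\lambda=0$ computation, via the $b$-derivative of the profile and the established monotonicity of $b\mapsto\hat\rho(1)$, yields $\sgn\mathcal{D}(0)=+1$ as well; alternatively, one matches the two computations by noting they are governed by the same monotone dependence on the connection parameter. The main obstacle I expect is the $\lambda = 0$ sign computation: one must correctly identify the explicit solution basis of the linearized steady operator from the derivatives of the profile relations, correctly impose the normalization $r(0)=v(0)=0,\ v'(0)=1$ (which forces a specific linear combination whose coefficient sign must be tracked), and then connect the resulting value of $v(1)$ to the monotonicity of $\phi$ from Proposition \ref{existence_steady_sol}. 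The large-$\lambda$ limit is more routine but still requires care with the boundary layer at the inflow point $x=0$, where $\hat u(0)=u_0>0$ governs the layer width; one must verify the layer contribution does not flip the sign of $v(1)$.
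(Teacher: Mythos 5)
Your plan is workable and reaches the right conclusion, but it differs from the paper's proof at both ends of the spectral parameter. At $\lambda=0$ the paper does not use parameter derivatives of the profile: proceeding as in Proposition \ref{compact_inverse_L}, it integrates the first-order system directly and obtains the closed formula $v(x)=\int_0^x\exp\bigl(\frac{1}{\nu}\int_y^x\hat\rho\hat u-\frac{P'(\hat\rho)}{\hat u}\hat\rho\,dz\bigr)dy\;v_x(0)$, whose integrand is manifestly positive, so $\sgn\mathcal{D}(0)=\sgn v_x(0)=1$ with no appeal to the monotonicity of $\phi$. Your variational route also closes: since $\hat\rho\hat u\equiv m$ and $\nu\hat u_x=m\hat u+P(\hat\rho)-b$, the derivative $\partial_b(\hat\rho,\hat u)$ at fixed $m$ and fixed $\hat\rho(0)=\rho_0$ satisfies $r(0)=v(0)=0$ and $v_x(0)=-1/\nu$, so the Evans solution is $-\nu\,\partial_b(\hat\rho,\hat u)$ and $\mathcal{D}(0)=\nu m\,\hat\rho(1)^{-2}\partial_b\hat\rho(1)=\nu m\,\hat\rho(1)^{-2}\phi'(b)$; this is conceptually appealing (it identifies $\mathcal{D}(0)$ with the transversality of the shooting map of Proposition \ref{existence_steady_sol}), but it requires upgrading ``$\phi$ increasing'' to strict positivity of $\partial_b\hat\rho(1)$ (easy, via the variational equation of \eqref{ode1}, which has positive source and zero data), whereas the paper's explicit formula gives positivity for free. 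At $\lambda\to+\infty$ the paper avoids WKB/tracking entirely: after the bound $\lver r\rver_2\le C\lambda^{-1/2}(\lver v\rver_2+\lver v_x\rver_2)$ it deforms the $v$-equation by a homotopy to $\nu z_{xx}=\lambda z$ and shows by an energy estimate that the associated Dirichlet problem has only the trivial solution along the whole homotopy, so $v(1)$ never vanishes and agrees in sign with $z(1)>0$; this is more elementary and self-contained than invoking the tracking lemma of \cite{gardner_zumbrun_gap_lemma}. One inaccuracy in your large-$\lambda$ picture: there is no boundary layer in $r$ at $x=0$ generated by the data, since $r(0)=0$; rather $r$ is slaved to $(\hat\rho v)_x$ at order $\lambda^{-1}$ through the integrating factor $e^{-\lambda\int_y^x dz/\hat u}$, which is precisely the content of the paper's $L^2$ estimate on $r$. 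With these adjustments your argument would constitute a valid alternative proof.
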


\begin{proof}
First we compute $\sgn \left(\mathcal{D} \left[\rho_{0},u_{0},u_{1} \right](0) \right)$. Proceeding as in Proposition \ref{compact_inverse_L}, we get the following system
\begin{equation*}
\left\{
\begin{aligned}
&\hat{\rho} v +\hat{u} r  = 0,\\
&\hat{\rho} \hat{u} v + \frac{\hat{\rho}}{\hat{u}} P'(\hat{\rho}) v = \nu v_{x} - \nu  v_{x}(0),\\
&r(0)=v(0)=0 \text{  ,  } v_{x}(0)=1,
\end{aligned}
\right.
\end{equation*}

\noindent and we obtain
\begin{equation*}
v(x) = \int_{0}^{x} \exp \left( \frac{1}{\nu} \int_{y}^{x}  \hat{\rho} \hat{u} - \frac{P'(\hat{\rho})}{\hat{u}} \hat{\rho} dz \right) dy \; v_{x}(0).
\end{equation*}

\noindent Then $\sgn v(1) = \sgn v_{x}(0) = 1$. Secondly, we compute $\mathcal{D} \left[\rho_{0},u_{0},u_{1} \right](+ \infty)$. We have
\begin{equation}\label{high_freq_system}
\left\{
\begin{aligned}
&\lambda r + \hat{u} r_{x} = f,\\
& \nu v_{xx} = \lambda \hat{\rho} v + P'(\hat{\rho}) r_{x} + g,\\
&r(0)=v(0)=0 \text{  ,  } v_{x}(0)=1,
\end{aligned}
\right.
\end{equation}
where $\lver f \rver_{2} + \lver g \rver_{2} \leq C \left(\lver r \rver_{2} + \lver v \rver_{2} + \lver v_{x} \rver_{2} \right)$. By solving the first equation of system \eqref{high_freq_system} we get for $\lambda$ large enough
\begin{equation*}
\lver r \rver_{2} \leq \frac{C}{\sqrt{\lambda}} \left(\lver v \rver_{2} + \rver v_{x} \lver_{2} \right).
\end{equation*}

\noindent We can rewrite the second equation of system \eqref{high_freq_system} as
\begin{equation*}
\nu v_{xx} = \lambda \hat{\rho} v + P'(\hat{\rho}) r_{x} +  \widetilde{g} \text{   with   } v(0)=0 \text{  ,  } v_{x}(0)=1,
\end{equation*}
where $\lver \widetilde{g} \rver_{2} \leq C \left(\lver v \rver_{2} + \lver v_{x} \rver_{2} \right)$. Then, we consider for $s \in [0,1]$ the equation
\begin{equation*}
\nu w_{xx} = \lambda \left((1-s) \hat{\rho} + s \right) w + (1-s) P'(\hat{\rho}) r_{x} + (1-s) \widetilde{g} \text{   with   } w(0)=w(1)=0.
\end{equation*}

\noindent Multiplying by $w$ and integrating, we notice that when $\lambda$ is large enough the only solution of this equation  is $w = 0$. Therefore, for $\lambda$ large enough, we define $z$ solution of
\begin{equation*}
\nu z_{xx} = \lambda z \text{   with   } z(0)=0 \text{  ,  } z_{x}(0)=1.
\end{equation*}  

\noindent and  $v(1)$ and $z(1)$ agree in sign.  It follows that $\sgn v(1) = \sgn z(1) = \sgn z_{x}(0) = 1$.
\end{proof}

This proposition also shows that problem \eqref{linear_steady}-\eqref{BC_linear_steady} has an even number of nonstable eigenvalues, i.e. eigenvalues with a nonnegative real part (see  \cite{gardner_jones_stab_ind,pego_weinstein_stab_ind,gardner_zumbrun_gap_lemma}). 

\medskip

\noindent Thanks to Lemma \ref{high_freq_estimate}, we can numerically check that $\sigma(\mathcal{S}^{-1} \mathcal{L})$ does not contain nonstable eigenvalues. Such verifications have for instance been done on the whole line (see \cite{num_stab_zum}). 

\begin{figure}[!t]
\centering
\includegraphics[scale=0.3]{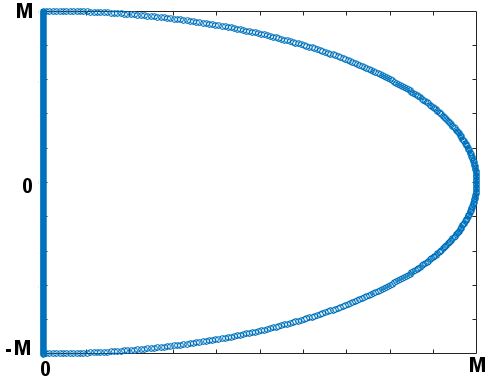}
\caption{Contour in the complex plane.}
\label{lambdagraph}
\end{figure}

\begin{figure}[!t]
\centering
\includegraphics[scale=0.35]{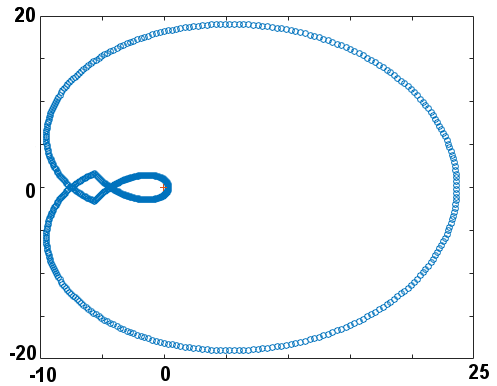}
\caption{Image of a contour mapped by the Evans function. $\nu=1$, $u_{0} = \frac{3}{2}$, $u_{1} = 1$, $\rho_{0} = 2$.}
\label{evanscontour}
\end{figure}

\medskip

In the following, we display some numerical simulations for a monatomic pressure law $P(\rho) = \rho^{1.4}$. For any $\lambda$, we can compute the associated Evans function thanks to system \eqref{eq_diff_evans_func}. We use a Runge Kutta 4 scheme. For each value of $u_{0}$, $u_{1}$, $\rho_{0}$ and $\nu$, we compute the Evans function along semi-circular contours of radius $M$ (see Figure \ref{lambdagraph}). We choose $M$ large enough such that our domain contains the half ball of Lemma \ref{high_frequency_control}. Figure \ref{evanscontour} represents the image of the contour with $M=10$, $\nu=1$, $u_{0} = \frac{3}{2}$, $u_{1} = 1$ and $\rho_{0} = 2$. Figure \ref{evanscontour2} represents the image of the contour with $M=10$, $\nu=0.1$, $u_{0} = \frac{3}{2}$, $u_{1} = 1$ and $\rho_{0} = 2$. We can see on these examples that the winding number of these graphs are both zero. Several computations have been performed for other values of the parameters $\nu \in [0.1,10]$, $u_{0} \in [1,10]$, $u_{1} \in [1,10]$ and $\rho_{0} \in [1,10]$. We could not find any nonstable eigenvalues.

\begin{figure}[!t]
\centering
\includegraphics[scale=0.35]{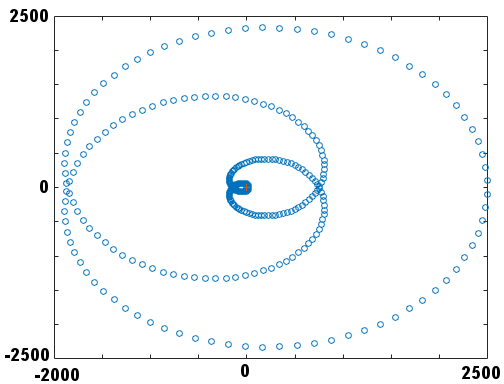}
\caption{Image of a contour mapped by the Evans function. $\nu=0.1$, $u_{0} = \frac{3}{2}$, $u_{1} = 1$, $\rho_{0} = 2$.}
\label{evanscontour2}
\end{figure}

\section{Local existence}\label{section_local_existence}

In this section, we state a local wellposedness result for problem \eqref{NS}-\eqref{BC} (see e.g. \cite{Matsumura_Nishida,Matsumura_Nishihara}).

\begin{prop}\label{local_existence}
Let $\rho_0>0,$ $u_0>0$ and $u_1>0$.  Assume that $P$ satisfies \eqref{pressure_cond}.  Let $\left(\rho_{ini},u_{ini} \right) \in {H^{1}}$ satisfying the boundary conditions \eqref{BC} and $\rho_{ini} > 0$. Then, there exists a time $T>0$ such that problem \eqref{NS}-\eqref{BC} has a unique solution $(\rho,u)$ in $\mathcal{C}\left([0,T];H^{1}(0,1) \right)$ with
\begin{equation*}
\underset{[0,T]}{\sup} \lver \left( \rho,u \right)(t) \rver_{H^{1}} \leq 2 \lver \left( \rho_{ini},u_{ini} \right) \rver_{H^{1}} \text{  and  } \rho(t,x) \geq \frac{\rho_{ini}(x)}{2} \text{ , } 0 \leq t \leq T \text{ , } 0 \leq x \leq 1.
\end{equation*}
\end{prop}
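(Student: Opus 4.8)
The plan is to prove a standard local well-posedness result for the quasilinear hyperbolic-parabolic system \eqref{NS}-\eqref{BC} via a linearized iteration scheme and a fixed-point argument in $\mathcal{C}([0,T];H^1)$. First I would symmetrize/reduce the problem: writing $(\rho,u)=(\hat\rho,\hat u)+(\sigma,w)$ would recenter at the steady state, but it is cleaner here to work directly with $(\rho,u)$ and track the constraint $\rho>0$. The first equation is transport in $\rho$ and the second is a viscous equation for $u$ with the density-dependent factor $\rho$ multiplying $u_t$; I would divide the momentum equation by $\rho$ (legitimate as long as $\rho\ge \rho_{ini}/2>0$) to put it in the form $u_t - (\nu/\rho) u_{xx} = (\text{lower-order terms in }\rho,u,\rho_x,u_x)$. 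The boundary conditions \eqref{BC} are time-independent and noncharacteristic, so they are compatible with both the transport structure (inflow at $x=0$, where $u_0>0$) and the parabolic structure (Dirichlet data for $u$ at both ends).

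The key steps, in order: (i) Set up the iteration $(\rho^{n+1},u^{n+1})$ solving the \emph{linear} system obtained by freezing the coefficients at $(\rho^n,u^n)$ — i.e. $\rho^{n+1}_t + u^n \rho^{n+1}_x = -\rho^n u^n_x$ with $\rho^{n+1}(t,0)=\rho_0$, and $u^{n+1}_t - (\nu/\rho^n)u^{n+1}_{xx} = F(\rho^n,u^n,\rho^n_x,u^n_x)$ with the three Dirichlet conditions. Linear transport with $H^1$ coefficients and a positive inflow speed is solvable by characteristics and preserves positivity of $\rho^{n+1}$; the linear parabolic equation with a positive bounded coefficient $\nu/\rho^n$ is solvable in $\mathcal{C}([0,T];H^1)$ by standard theory. (ii) Derive uniform-in-$n$ a priori estimates: an energy estimate for the transport equation gives $\|\rho^{n+1}(t)\|_{H^1}\le \|\rho_{ini}\|_{H^1}\exp(C\int_0^t\|u^n\|_{H^1})+\dots$, using that differentiating the transport equation in $x$ produces only $\|u^n_x\|_{L^\infty}\lesssim\|u^n\|_{H^1}$-controlled terms plus a boundary contribution at $x=0$ that has a favorable sign (outgoing) or is harmless; a parabolic energy estimate for $u^{n+1}$ in $H^1$, integrating by parts in the $u^{n+1}_{xx}$ term and absorbing the $\nu/\rho^n$ variation, gives $\|u^{n+1}(t)\|_{H^1}^2 + \nu\int_0^t\|u^{n+1}_{xx}\|_2^2 \le C(\|u_{ini}\|_{H^1}^2 + \int_0^t(\text{polynomial in the norms at step }n))$. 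Combining and choosing $T=T(\|(\rho_{ini},u_{ini})\|_{H^1},\rho_{ini})$ small enough, a Gronwall/continuity argument closes the ball $\sup_{[0,T]}\|(\rho,u)\|_{H^1}\le 2\|(\rho_{ini},u_{ini})\|_{H^1}$ and the lower bound $\rho\ge\rho_{ini}/2$ (the latter from the characteristic representation: $\rho^{n+1}(t,x)=\rho_{ini}(\text{foot})\exp(-\int u^n_x)$ along characteristics, with $\int|u^n_x|\le T\cdot C\|u^n\|_{H^1}$ small). (iii) Contraction: estimate differences $(\rho^{n+1}-\rho^n,u^{n+1}-u^n)$ in the weaker norm $\mathcal{C}([0,T];L^2)$ — the same transport and parabolic energy estimates applied to the difference system, with all coefficient differences controlled by the uniform $H^1$ bounds from (ii), yield a contraction factor $CT<1$ for $T$ small. (iv) Pass to the limit, recover $H^1$ regularity of the limit by weak-lower-semicontinuity / the uniform bound, verify the boundary conditions pass to the limit, and prove uniqueness by the same difference estimate applied to two solutions.

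The main obstacle I expect is the interplay between the transport equation for $\rho$ and the required $H^1$ (not just $L^2$) control: differentiating $\rho_t+u\rho_x = -\rho u_x$ in $x$ gives $\partial_t\rho_x + u\,\partial_x\rho_x = -u_x\rho_x - \rho_x u_x - \rho u_{xx} = -2u_x\rho_x-\rho u_{xx}$, so the energy estimate for $\rho_x$ needs $\|u_{xx}\|_{L^2}$ on the right-hand side — which is exactly what the parabolic estimate for $u$ supplies via its dissipation term, but only after integration in time. One must therefore set up the closed estimate carefully, tracking $\sup_t\|\rho\|_{H^1}$, $\sup_t\|u\|_{H^1}$ and $\int_0^t\|u_{xx}\|_2^2$ together, and use Young's inequality to absorb the $\int\|\rho\|_{H^1}\|u_{xx}\|_2$ cross term into the dissipation. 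A secondary technical point is handling the boundary terms at $x=0,1$ generated by integration by parts in the parabolic estimate for $u_x$: since $u$ vanishes on the boundary for all $t$ but $u_x$ need not, one should differentiate the equation and use the boundary conditions (as in Lemma \ref{high_frequency_control} and Proposition \ref{high_freq_estimate}) to express $v_{xx}$ at the endpoints; alternatively, one keeps estimates at the level that avoids boundary terms by working with $\|u_t\|_2$ and elliptic regularity $\|u\|_{H^2}\lesssim\|u_t\|_2+\text{l.o.t.}$. Since the proposition only claims $H^1$ regularity and a crude factor-of-two bound, I would state that these are all standard (citing \cite{Matsumura_Nishida,Matsumura_Nishihara}) and give only the a priori estimate in detail, the rest being a routine iteration.
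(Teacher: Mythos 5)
The paper does not actually prove this proposition: it is stated as a known local well-posedness result and attributed to the literature (\cite{Matsumura_Nishida,Matsumura_Nishihara}), so there is no in-paper argument to compare against. Your sketch is the standard route one would take to prove it from scratch (linearized iteration, transport estimate for $\rho$ coupled to a parabolic estimate for $u$, the $\int_0^t\lver u_{xx}\rver_2^2$ dissipation closing the $H^1$ bound on $\rho$, contraction in the weaker $L^2$ topology), and you have correctly identified the one genuinely delicate coupling, namely that $\partial_x$ of the continuity equation produces $\rho u_{xx}$, which is only controlled after integration in time through the parabolic dissipation. That is the right skeleton and is consistent with what the cited references do.

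Two concrete points you should repair before calling the iteration routine. First, your positivity mechanism does not match your linearization: with the right-hand side $-\rho^{n}u^{n}_{x}$ the characteristic ODE is $\frac{d}{dt}\rho^{n+1}=-\rho^{n}u^{n}_{x}$, which gives an additive (not exponential) representation and does not by itself keep $\rho^{n+1}$ positive; to get $\rho^{n+1}=\rho_{ini}(\mathrm{foot})\exp\left(-\int u^{n}_{x}\right)$ you must instead linearize as $\rho^{n+1}_{t}+u^{n}\rho^{n+1}_{x}+\rho^{n+1}u^{n}_{x}=0$, which is still linear in the new iterate. Even then, the claimed bound $\rho(t,x)\geq \rho_{ini}(x)/2$ is pointwise at the \emph{same} $x$ while the characteristic foot is a different point (or the boundary), so you need the uniform continuity of $\rho_{ini}$ (via $H^{1}\hookrightarrow C^{0,1/2}$), the strict lower bound $\min\rho_{ini}>0$, and $T$ small to convert the estimate along characteristics into the stated form. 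Second, $u^{n}\in\mathcal{C}([0,T];H^{1})$ alone gives only H\"older-$1/2$ regularity in $x$, which is not enough to define characteristics uniquely; you must carry the parabolic gain $u^{n}\in L^{2}(0,T;H^{2})$, hence $u^{n}_{x}\in L^{1}(0,T;L^{\infty})$, as part of the iteration space. Both issues are fixable within your framework, so the proposal is sound as a plan, but as written these steps are asserted rather than established.
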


\section{Nonlinear stability}\label{section_nonlinear_stab}
For a solution $(\rho,u)$ of problem \eqref{NS}-\eqref{BC}, we define $(r,v) = (\rho - \hat{\rho}, u - \hat{u})$. We notice that $(r,v)$ satisfies the boundary conditions \eqref{BC_linear_steady} and ($\mathcal{L}$ and $\mathcal{S}$ are defined in Section \ref{eigenvalue_problem_part})

\begin{equation}\label{eq_nonlinear_int}
\begin{pmatrix} 1 & 0 \\ 0 &  \rho \end{pmatrix} \begin{pmatrix} r \\ v \end{pmatrix}_{t} - \mathcal{L} \begin{pmatrix} r \\ v \end{pmatrix} =  \begin{pmatrix} - \left( rv \right)_{x}  \\  -(\hat{u}v)_{x} r - \hat{\rho} v v_{x} - vrv_{x} - \left(P(\rho) - P(\hat{\rho}) - P'(\hat{\rho})r \right)_{x} \end{pmatrix}.
\end{equation}
Then, we get
\begin{equation}\label{eq_nonlinear}
\mathcal{S} \begin{pmatrix} r \\ v \end{pmatrix}_{t} - \mathcal{L} \begin{pmatrix} r \\ v \end{pmatrix} =  \mathcal{N} \text{  with  } r(t,0) = u(t,0) = u(t,1)=0 
\end{equation}
where $\mathcal{N}_{1} = -(rv)_{x}$ and
\begin{align*}
\mathcal{N}_{2} = &-\frac{\hat{\rho}}{\hat{\rho} + r} \left[ (\hat{u}v)_{x} r + \hat{\rho} v v_{x} + vrv_{x} + \left(P(r + \hat{\rho}) - P(\hat{\rho}) - P'(\hat{\rho})r \right)_{x} \right]\\
&+ \frac{r}{\hat{\rho} + r} \left[\left(\hat{\rho} \hat{u} v + P'(\hat{\rho}) r \right)_{x} + \hat{u}_{x} \left(\hat{u} r + \hat{\rho} v \right) - \nu v_{xx} \right].
\end{align*}
Notice that $\mathcal{N}_{1}(t,0)  = \mathcal{N}_{2}(t,0) = 0$ and that
\small
\begin{equation*}
\mathcal{N}_{2}(t,1) = - \left[ \hat{u} v_{x} r + \left(P'(\hat{\rho}+r) - P'(\hat{\rho}) - P''(\hat{\rho})r \right) \left(\hat{\rho} + r \right)_{x} + P''(\hat{\rho}) r r_{x} \right](t,1).
\end{equation*}
\normalsize
The following proposition is a nonlinear damping estimate.
\begin{prop}\label{nonlinear_damping}
Let $T>0$ and consider a solution $(r,v) \in \mathcal{C}\left([0,T];H^{1} \right)$ of \eqref{eq_nonlinear} on $[0,T]$. Assume that $P$ satisfies \eqref{pressure_cond} and that there exists $\epsilon >0$ small enough such that
\begin{equation*}
\underset{[0,T]}{\sup} \lver (r,v)(t) \rver_{H^{1}} \leq \epsilon.
\end{equation*}
Then, there exists some constants $C>0$ and $\theta_{0} > 0$ such that for all $0 \leq t \leq T$ and any $\theta \leq \theta_{0}$,
\begin{equation*}
\lver (r,v)(t) \rver_{H^{1}}^{2} \leq C e^{-\theta t} \lver (r,v)(0) \rver_{H^{1}}^{2} + C \int_{0}^{t} e^{-\theta (t-s)} \lver (r,v)(s) \rver_{2}^{2} ds.
\end{equation*}
Furthermore, if $(r,v) \in \mathcal{C}\left([0,T];H^{2} \times H^{3} \right) \cap \mathcal{C}^{1}\left([0,T];H^{1} \right)$ and for $\epsilon$ small enough
\begin{equation*}
\underset{[0,T]}{\sup} \lver (r,v)(t) \rver_{H^{2} \times H^{3}} \leq \epsilon,
\end{equation*}
there exists some constants $C>0$ and $\theta_{1} > 0$, for any $0 \leq t \leq T$ and $\theta < \theta_{1}$
\begin{equation*}
\lver (r,v)(t) \rver_{H^{2} \times H^{3}}^{2} \leq C e^{-\theta t} \lver (r,v)(0) \rver_{H^{2} \times H^{3}}^{2} + C \int_{0}^{t} e^{-\theta (t-s)} \lver (r,v)(s) \rver_{2}^{2} ds.
\end{equation*}
\end{prop}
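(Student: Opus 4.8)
The plan is to prove a standard Kawashima-type ``nonlinear damping'' (or Kreiss-type) estimate: an energy inequality for the highest-order derivatives that closes because the parabolic term $\nu v_{xx}$ dominates, up to controllable lower-order contributions which can be absorbed into the $L^2$ integral term appearing on the right-hand side. First I would work with the $H^1$ statement. Testing the system \eqref{eq_nonlinear} against a carefully chosen multiplier -- concretely the same Friedrichs symmetrizer/energy functional $\mathcal{E}(r,v)$ that was used in the proof of the $\mathcal{C}^0$-semigroup lemma on $\mathcal{H}_{1,0}$, namely $\mathcal{E}(r,v)=\frac{A}{2}(|r|_2^2+(\hat\rho v,v)_2)+(v,\hat\rho^2 r_x)_2+\frac{\nu}{2}|r_x|_2^2$ with $A$ large -- I compute $\frac{d}{dt}\mathcal{E}(r,v)$ along solutions. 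The linear part $\mathcal{L}$ produces, after integrating by parts and using the cancellation of the cross term $\pm(\nu v_{xx},\hat\rho r_x)_2$ exactly as in that earlier lemma, a good damping term $-c(|v_x|_2^2+|r_x|_2^2)$ (the boundary contributions at $x=1$ have a favorable sign since $\hat u(1)=u_1>0$, and those at $x=0$ vanish since $r_x(t,0)=0$ by differentiating the boundary relation), plus terms bounded by $C\,|(r,v)|_2\,|(r,v)|_{H^1}$. The novel contribution is the nonlinearity $\mathcal{N}$: using the smallness assumption $\sup_{[0,T]}|(r,v)|_{H^1}\le\epsilon$, every term in $\mathcal{N}_1,\mathcal{N}_2$ is quadratic, so its $H^1\times L^2$-type pairing against $\mathcal{E}'$ is bounded by $C\epsilon\,|(r,v)|_{H^1}^2$ plus the boundary term $\mathcal{N}_2(t,1)$ (written explicitly in the excerpt), which is also quadratic and handled via the trace/interpolation lemma \ref{Linfty_controls}. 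Choosing $\epsilon$ small enough so that $C\epsilon$ is less than half the damping coefficient, then applying Young's inequality, yields $\frac{d}{dt}\mathcal{E}(r,v)\le -2\theta\,\mathcal{E}(r,v)+C|(r,v)|_2^2$; Gronwall then gives the claimed $H^1$ inequality.

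For the $H^2\times H^3$ statement I would differentiate \eqref{eq_nonlinear} once (and then appropriately use the equation to trade time-derivatives for space-derivatives, as in Matsumura--Nishida-type arguments) and run the same energy method on the differentiated system with an analogous higher-order functional. The structure is identical: the parabolic term gives damping of the top derivatives $|r_{xx}|_2^2+|v_{xxx}|_2^2$ (note the regularity split $H^2$ for $r$, $H^3$ for $v$ reflecting that the first equation is transport in $r$ while the second is parabolic in $v$), the commutator terms coming from $\hat\rho,\hat u$ being variable coefficients are lower order and absorbed, the nonlinear terms pick up a factor $\epsilon$ from the smallness of $|(r,v)|_{H^2\times H^3}$ and are absorbed into the damping, and the boundary terms have a good sign or vanish. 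One subtlety is that $(r,v)$ only has the a priori regularity $\mathcal{C}([0,T];H^2\times H^3)\cap\mathcal{C}^1([0,T];H^1)$, so the highest-order energy identity must first be justified by a density/mollification argument (e.g. on a Galerkin approximation or via difference quotients), which is why the statement is phrased that way; this is routine but should be mentioned.

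The main obstacle, and the point where real care is needed, is the bookkeeping of the boundary terms at $x=0$ and $x=1$ together with the choice of multiplier so that \emph{all} boundary contributions are either zero or of a favorable sign. In particular one must use the boundary conditions $r(t,0)=v(t,0)=v(t,1)=0$ and their differential consequences (most importantly $r_x(t,0)=0$, obtained by differentiating the $\rho$-equation in time and evaluating at $x=0$ where $v,v_x$-data is pinned), and at $x=1$ one must verify that the combination multiplying $|r_x(1)|^2$ (respectively $|r_{xx}(1)|^2$ at higher order) is negative -- this is exactly where $\hat u(1)=u_1>0$ (outflow) enters, and it is the structural analogue of the ``noncharacteristic, correct sign'' condition. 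The secondary obstacle is simply verifying that every nonlinear term genuinely carries an $\epsilon$ and no term is ``too singular'' near the boundary; the explicit formula for $\mathcal{N}_2(t,1)$ given just before the proposition is what makes this manageable, since it shows the worst boundary nonlinearity is still quadratic and controlled by $|(r,v)|_{H^2}$ on the trace.
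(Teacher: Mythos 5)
There is a genuine gap, and it lies exactly at the point you pass over most quickly: the claim that testing with the semigroup-lemma functional $\mathcal{E}(r,v)=\frac{A}{2}(\lvert r\rvert_2^2+(\hat\rho v,v)_2)+(v,\hat\rho^2 r_x)_2+\frac{\nu}{2}\lvert r_x\rvert_2^2$ produces a damping term $-c(\lvert v_x\rvert_2^2+\lvert r_x\rvert_2^2)$ ``exactly as in that earlier lemma.'' That earlier lemma does \emph{not} produce such a term: its conclusion is only the growth bound $\frac{d}{dt}\mathcal{E}\le 2\omega\mathcal{E}$, precisely because with an unweighted multiplier the $\lvert r_x\rvert_2^2$ contributions (e.g.\ the convection terms proportional to $\int\hat u_x\lvert r_x\rvert^2$ and the cross terms of size $\lvert r_x\rvert_2\lvert(r,v)\rvert_{H^1}$) have no sign and nothing to be absorbed into -- $r$ solves a transport equation with no diffusion, so there is no parabolic damping for $r_x$. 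For a general (large-amplitude) steady state the sign of $\hat u_x$ and the size of $\hat u_x$ relative to $P'(\hat\rho)\hat\rho$ are not under your control, so this functional cannot close. The paper's proof instead uses the Goodman-type \emph{weighted} energy $\mathcal{E}=\frac12\int_0^1\phi_1\lvert r_x\rvert^2+\phi_2\lvert(\hat\rho v)_x\rvert^2$ with the two structural conditions $\phi_1=P'(\hat\rho)\phi_2$ (so the principal cross terms $\Re(\overline{r_x}v_{xx})$ cancel identically) and $\frac12(\hat u\phi_1)_x-2\hat u_x\phi_1<0$ (which can always be arranged by solving an ODE for $\phi_1$, and which is what turns the transport part into a strictly negative multiple of $\lvert r_x\rvert^2$). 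This weight is the essential idea of the proposition, and it is absent from your argument. A further mismatch: your functional is equivalent to the $H^1\times L^2$ norm, not $H^1\times H^1$, so even if the Gronwall step closed it would not yield the stated decay of $\lver(r,v)\rver_{H^1}^2$.

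Two secondary points. First, the boundary terms at $x=0$ do not vanish: $r_x(t,0)=0$ is not a consequence of the boundary conditions for the nonlinear problem; what one has (from $\mathcal{N}_1(t,0)=0$) is $u_0 r_x(t,0)=-\rho_0 v_x(t,0)$, and the resulting bracket $+\frac12 u_0\phi_1(0)\lvert r_x(0)\rvert^2$ has the \emph{wrong} sign and must be controlled by the trace interpolation of Lemma \ref{Linfty_controls} together with Lemma \ref{interpolation_thm} and Young's inequality, being absorbed into the $-\nu\int\hat\rho\phi_2\lvert v_{xx}\rvert^2$ damping and the $C\lvert(r,v)\rvert_2^2$ slack. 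Second, for the $H^2\times H^3$ estimate the paper does not differentiate in space (which would generate boundary traces of $r_{xx},v_{xxx}$ with no boundary conditions to exploit); it applies the same weighted $H^1$ estimate to $(r_t,v_t)$, which satisfies the \emph{same} boundary conditions \eqref{BC_linear_steady}, and then recovers $\lver(r,v)\rver_{H^2\times H^3}\le C\lver(r,v)\rver_{H^1}+C\lver(r_t,v_t)\rver_{2}$ from the equation. This is also the content of Remark \ref{NZrmk}; your space-differentiation route is the one the paper is implicitly warning against.
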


\begin{proof}
This proof is based on an appropriate Goodman-type energy estimate and is similar to the proof of Proposition \ref{high_freq_estimate}. We define the energy equivalent to the $H^{1}$-norm (by the Poincar\'e inequality \ref{poincare})
\begin{equation*}
\mathcal{E} \left(r,v \right) = \frac{1}{2} \int_{0}^{1} \phi_{1} \lver r_{x} \rver^{2} + \phi_{2} \lver \left( \hat{\rho} v \right)_{x} \rver^{2}
\end{equation*}
where $\phi_{1}$ and $\phi_{2}$ satisfy
\begin{equation*}
\phi_{1} > 0 \text{ ,  } \phi_{2} > 0 \text{ ,  }   \phi_{1} = P'(\hat{\rho}) \phi_{2} \text{ ,  } \frac{1}{2} (\hat{u} \phi_{1})_{x} - 2 \hat{u}_{x} \phi_{1} < 0.
\end{equation*}

Then, after some computations, we obtain
\small
\begin{equation*}
\begin{aligned}
\frac{d}{dt} \mathcal{E} \left(r,v \right) \leq &-\nu \int_{0}^{1} \hat{\rho} \phi_{2} \lver v_{xx} \rver^{2} + \left(\frac{1}{2} (\hat{u} \phi_{1})_{x} - 2 \hat{u}_{x} \phi_{1} \right) \lver r_{x} \rver^{2} + r_{x} v_{xx} \left(- \hat{\rho} \phi_{1} + \hat{\rho} P'(\hat{\rho}) \phi_{2} \right)\\
&+ \left[-\frac{1}{2} \hat{u} \phi_{1} \lver r_{x} \rver^{2} + \phi_{2} \hat{\rho} v_{x} \left(\nu v_{xx} - P'(\hat{\rho}) r_{x} \right) \right]_{0}^{1} + \int_{0}^{1} \phi_{1} r_{x} \left( \mathcal{N}_{1} \right)_{x} + \phi_{2} \left( \hat{\rho} v \right)_{x} \left(\mathcal{N}_{2} \right)_{x}\\
&+C \lver r_{x} \rver_{2} \left( \lver r \rver_{2} + \lver v \rver_{H^{1}} \right) + C \lver v \rver_{H^{1}} \left( \lver r \rver_{H^{1}} + \lver v \rver_{H^{1}} + \lver v_{xx} \rver_{2}\right).
\end{aligned}
\end{equation*}
\normalsize
Integrating by parts and using Lemma \ref{Linfty_controls} we get
\begin{equation*}
\int_{0}^{1} \phi_{1} r_{x} \left( \mathcal{N}_{1} \right)_{x} + \phi_{2} \left( \hat{\rho} v \right)_{x} \left(\mathcal{N}_{2} \right)_{x} \leq \left[\phi_{2} \hat{\rho} v_{x} \mathcal{N}_{2} \right]_{0}^{1} + C \lver (r,v) \rver_{H^{1}}^{2} (\lver (r,v) \rver_{H^{1}} + \lver v_{xx} \rver_{2}).
\end{equation*} 
Then, since $\mathcal{N}_{1}(t,0) = \mathcal{N}_{2}(t,0) = 0$, we have
\small
\begin{align*}
& u_{0} r_{x}(t,0) = - \rho_{0} v_{x}(t,0),\\
&\nu v_{xx}(t,0) - P'(\rho_{0}) r_{x}(t,0) = \rho_{0} u_{0} v_{x}(t,0),\\
&\nu v_{xx}(t,1) - P'(\hat{\rho}(1)) r_{x}(t,1) + \mathcal{N}_{2}(t,1) = \hat{\rho}(1) u_{1} v_{x}(t,1) + P''(\hat{\rho}(1)) \hat{\rho}_{x}(1) r(t,1) + \hat{u}_{x}(1) u_{1} r(t,1).
\end{align*}
\normalsize
Finally, thanks to the previous boundary equalities, Lemma \ref{Linfty_controls}, Lemma \ref{interpolation_thm}, Young's inequality and the fact that $\lver (r,v) \rver_{H^{1}}$ is small enough, we obtain
\begin{equation*}
\frac{d}{dt} \mathcal{E} \left(r,v \right) \leq - \theta_{0} \lver (r_{x},v_{xx}) \rver_{2}^{2} + C \lver (r,v) \rver_{2}^{2}.
\end{equation*}
The first inequality easily follows from the Poincar\'e-Wirtinger inequality. Similarly, since $(r_{t},v_{t})$ satisfies the boundary conditions \eqref{BC_linear_steady}, we get for $\epsilon$ and $\theta_{0}$ small enough
\begin{equation*}
\frac{d}{dt} \mathcal{E} \left(r_{t},v_{t} \right) \leq - \theta_{0} \lver (r_{tx},v_{txx}) \rver_{2}^{2} + C\lver (r_{t},v_{t}) \rver_{2}^{2}.
\end{equation*}
Then, using \eqref{eq_nonlinear_int} we notice that
\begin{equation*}
\lver (r_{t},v_{t}) \rver_{2} \leq  C \lver (r,v) \rver_{H^{1}} + C \lver v_{xx} \rver_{2}.
\end{equation*}
Therefore using the Poincar\'e inequalities, we get for $\delta$ and $\theta_{1}$ small enough
\begin{equation}\label{Et}
\frac{d}{dt} \left( \mathcal{E} \left( r,v \right) + \delta \mathcal{E} \left( r_{t},v_{t} \right) \right)  \leq - \theta_{1} \left( \lver (r_{x},v_{x}) \rver_{2}^{2} + \lver (r_{tx},v_{tx}) \rver_{2}^{2} \right) + C \lver (r,v) \rver_{2}^{2}
\end{equation}
and the result easily follows from the fact that
\begin{equation*}
\lver (r,v) \rver_{H^{2} \times H^{3}} \leq  C \lver (r,v) \rver_{H^{1}} + C \lver (r_{t},v_{t}) \rver_{2}.
\end{equation*}
\end{proof}

\begin{remark}\label{NZrmk}
By taking further time-derivatives, we could obtain an estimate similar to \eqref{Et} in an arbitrarily high-regularity Sobolev
space of mixed type $H^r\times H^s$, with $s\sim 2r$ as $r\to \infty$.
This observation repairs a minor error in \cite{toan_zumbrun_ns}, citing an estimate with $r=s$.
\end{remark}

We can now state the main result of this paper. 

\begin{thm}\label{stab_result}
Let $\rho_0>0,$ $u_0>0$ and $u_1>0$. Let $(\hat{\rho},\hat{u})$ be the unique steady solution of problem \eqref{NS}-\eqref{BC}. Assume that $P$ satisfies \eqref{pressure_cond}. Assume that there exists $\alpha>0$ such that $\Re (\sigma(\mathcal{S}^{-1} \mathcal{L})) < - \alpha$. Then, there exists  $\epsilon>0$ and $\theta > 0$, for any $\left(\rho_{ini},u_{ini} \right) \in H^{2} \times H^{3}$ satisfying the boundary conditions \eqref{BC}, the compatibility conditions
\small
\begin{equation}\label{compatibility_bound_cond}
\left(\rho_{ini} u_{ini} \right)_{x}\!(0)=0, \left(\rho_{ini} u_{ini}^{2} + P(\rho_{ini}) -  \nu u_{ini \; x}  \right)_{x}\!(0)=0, \left(\rho_{ini} u_{ini}^{2} + P(\rho_{ini}) -  \nu u_{ini \; x}  \right)_{x}\!(1)=0
\end{equation}
\normalsize
and 
\begin{equation*}
\lver \left(\rho_{ini},u_{ini} \right) - \left(\hat{\rho},\hat{u} \right) \rver_{H^{2} \times H^{3}}  \leq \epsilon,
\end{equation*}
the unique solution $(\rho,u)$ of problem \eqref{NS}-\eqref{BC} with the initial condition $\left(\rho_{ini},u_{ini} \right)$ satisfies
\begin{equation*}
\lver \left(\rho,u \right)(t) - \left(\hat{\rho},\hat{u} \right) \rver_{H^{2} \times H^{3}} \leq C \lver \left(\rho_{ini},u_{ini} \right) - \left(\hat{\rho},\hat{u} \right) \rver_{H^{2} \times H^{3}} e^{- \theta t}.
\end{equation*}
\end{thm}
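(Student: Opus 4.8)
The plan is to close a continuity (bootstrap) argument that glues together, via Duhamel's formula, the linear exponential decay of Proposition~\ref{pruss_thm} in the weak space $\mathcal{H}_{1,0}=H^{1}\times L^{2}$ and the nonlinear damping estimate of Proposition~\ref{nonlinear_damping} in the strong space $H^{2}\times H^{3}$. First I would promote the local existence statement. The compatibility conditions \eqref{compatibility_bound_cond} are exactly the requirement that, through the equations (see \eqref{eq_nonlinear_int}), the perturbation $(r,v)(0)=(\rho_{ini}-\hat{\rho},u_{ini}-\hat{u})$ and its time derivative $(r_{t},v_{t})(0)$ satisfy the linearized boundary conditions \eqref{BC_linear_steady}; combined with $(\rho_{ini},u_{ini})\in H^{2}\times H^{3}$ this lets one propagate $H^{2}\times H^{3}$ regularity (with the solution $\mathcal{C}^{1}$ in time into $H^{1}$) in Proposition~\ref{local_existence}, yielding on a maximal interval $[0,T^{*})$ a solution $(\rho,u)$ with $\rho$ bounded below. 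Note that for $t>0$ the relations $r(t,0)=v(t,0)=v(t,1)=0$ automatically force $(r_{t},v_{t})(t)$ to satisfy \eqref{BC_linear_steady} as well, so the $(r_{t},v_{t})$-component of Proposition~\ref{nonlinear_damping} is available. It then suffices to establish the asserted bound on any such interval.

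Next I would record the Duhamel representation. Since $(r,v)$ solves $\partial_{t}(r,v)=\mathcal{S}^{-1}\mathcal{L}(r,v)+\mathcal{S}^{-1}\mathcal{N}$ classically, since $(r,v)(t)\in\mathcal{H}_{1,0}$, and since the forcing $\mathcal{S}^{-1}\mathcal{N}(t)$ also lies in $\mathcal{H}_{1,0}=\{(r,v)\in H^{1}\times L^{2}:r(0)=0\}$ — here one uses that $(r,v)(t)\in H^{2}$ makes $\mathcal{N}_{1}(t)=-(rv)_{x}\in H^{1}$, that $\mathcal{N}_{2}(t)/\hat{\rho}\in L^{2}$, and the boundary identity $\mathcal{N}_{1}(t,0)=0$ noted after \eqref{eq_nonlinear} — one obtains
\[
(r,v)(t)=e^{t\mathcal{S}^{-1}\mathcal{L}}(r,v)(0)+\int_{0}^{t}e^{(t-s)\mathcal{S}^{-1}\mathcal{L}}\,\mathcal{S}^{-1}\mathcal{N}(s)\,ds
\]
as an identity in $\mathcal{H}_{1,0}$. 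The nonlinearity $\mathcal{N}$ is quadratic with at least one low-order factor in every term, so, using $H^{1}\hookrightarrow L^{\infty}$ and the positive lower bound on $\hat{\rho}+r$, one gets the tame estimate $\lver\mathcal{S}^{-1}\mathcal{N}(t)\rver_{H^{1}\times L^{2}}\leq C\lver(r,v)(t)\rver_{H^{2}\times H^{3}}^{2}$ whenever $\lver(r,v)(t)\rver_{H^{1}}\leq\epsilon$.

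The bootstrap then runs as follows. Fix $\theta>0$ small, below a fixed fraction (say a quarter) of each of the decay rates furnished by Propositions~\ref{pruss_thm} and~\ref{nonlinear_damping}, and assume $\lver(r,v)(t)\rver_{H^{2}\times H^{3}}\leq\delta e^{-\theta t}$ on some $[0,T]\subset[0,T^{*})$ with $\delta$ small. Inserting the tame estimate into the Duhamel formula and applying the decay bound for $e^{t\mathcal{S}^{-1}\mathcal{L}}$ on $\mathcal{H}_{1,0}$ from Proposition~\ref{pruss_thm}, the elementary convolution estimate gives $\lver(r,v)(t)\rver_{H^{1}\times L^{2}}\leq C(\epsilon+\delta^{2})e^{-2\theta t}$, hence $\lver(r,v)(t)\rver_{2}^{2}\leq C(\epsilon+\delta^{2})^{2}e^{-4\theta t}$. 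Feeding this into the $H^{2}\times H^{3}$ damping estimate of Proposition~\ref{nonlinear_damping} and integrating once more yields $\lver(r,v)(t)\rver_{H^{2}\times H^{3}}\leq C(\epsilon+\delta^{2})e^{-\theta t}$. Setting $\delta=2C\epsilon$ and taking $\epsilon$ small enough that $C(\epsilon+\delta^{2})<\delta$ strictly improves the assumed bound; a standard continuity argument — together with the continuation criterion of Proposition~\ref{local_existence} and the lower bound on $\rho$, which rule out finite-time blow-up — then forces $T^{*}=+\infty$ and $\lver(r,v)(t)\rver_{H^{2}\times H^{3}}\leq\delta e^{-\theta t}$ for all $t\geq0$, which is exactly the claimed estimate.

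I expect the principal difficulty to be the one flagged in Section~\ref{s:discussion}: making the weak and strong function-space frameworks mesh. Concretely, one must check that $\mathcal{S}^{-1}\mathcal{N}$ genuinely maps into $\mathcal{H}_{1,0}$ (which rests on $\mathcal{N}_{1}(t,0)=0$ and on the $H^{2}$-regularity of $r$), legitimize the Duhamel identity and the semigroup action in this weak space, and arrange the tame estimate on $\mathcal{N}$ so that it pairs with the mixed-regularity damping estimate with decay rates that actually close. The preliminary propagation of $H^{2}\times H^{3}$ regularity from the compatibility conditions, while routine, also requires some care.
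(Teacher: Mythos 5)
Your proposal is correct and follows essentially the same route as the paper: the Duhamel formula in $\mathcal{H}_{1,0}$, the linear exponential decay of Proposition \ref{pruss_thm}, and the nonlinear damping estimate of Proposition \ref{nonlinear_damping}, closed by a continuity/bootstrap argument, with the compatibility conditions used only to propagate $H^{2}\times H^{3}$ regularity. The sole difference is bookkeeping: the paper closes a quadratic integral inequality for the weighted quantity $\zeta_{0}(t)=\sup_{[0,t]}e^{\theta s/2}\lvert U(s)\rvert_{2}$ and then upgrades to $H^{2}\times H^{3}$, whereas you run the bootstrap ansatz directly on the $H^{2}\times H^{3}$ norm, which is not a substantive distinction.
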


\begin{remark}\label{H1XL2_explanation}
As we will see in the proof, since we do not know if $\mathcal{N}_{2}(t,1) = 0$, the only way to use a linear damping estimate is to work in $L^{2}$ for the $v$ component. That is why in Proposition \ref{pruss_thm} we used $H^{1} \times L^{2}$ and not $H^{1}$. Notice also that we impose the compatibility conditions \eqref{compatibility_bound_cond} in order to get enough regularity.
\end{remark}

\begin{proof}
We denote by $U(t,x) = \left(\rho,u \right)(t) - \left(\hat{\rho},\hat{u} \right)(t,x)$. Let $T$ be the existence time of Proposition \ref{local_existence}. The Duhamel formulation of Equation \eqref{eq_nonlinear} is, for $0\leq t \leq T$,
\begin{equation*}
U(t) = e^{t \mathcal{S}^{-1} \mathcal{L}} U(0) + \int_{0}^{t} e^{(t-s) \mathcal{S}^{-1} \mathcal{L}} \mathcal{S}^{-1} \mathcal{N}(s) ds.
\end{equation*}
Noticing that $\mathcal{N} \in \left\{(r,v) \in H^{1} \times L^{2} \text{, } r(0)=0 \right\}$ and that $\mathcal{N}$ contains at least quadratic terms, Proposition \ref{pruss_thm} gives the existence of $\theta>0$
\begin{equation*}
\lver U(t) \rver_{2} \leq \lver U(t) \rver_{H^{1} \times L^{2}} \leq C e^{- \theta t} \lver U(0) \rver_{H^{1} \times L^{2}} + \int_{0}^{t} e^{- \theta (t-s)} \lver U(s) \rver_{H^{2}}^{2} C \left(\lver U(s) \rver_{H^{2}} \right) ds.
\end{equation*}
Then, the equality \eqref{eq_nonlinear_int} gives 
\begin{equation*}
\lver U(t) \rver_{2} \leq C e^{- \theta t} \lver U(0) \rver_{H^{1} \times L^{2}} + \int_{0}^{t} C \left(\lver U(s) \rver_{H^{2}} \right) e^{- \theta (t-s)} \left( \lver U(s) \rver_{H^{1}}^{2} + \lver U_{t}(s) \rver_{H^{1}}^{2} \right) ds.
\end{equation*}
Furthermore, the compatibility conditions \eqref{compatibility_bound_cond} imply that $U \in \mathcal{C}\left([0,T];H^{2} \times H^{3} \right)$. Therefore, we can use the nonlinear damping estimate of Proposition \ref{nonlinear_damping} and by Proposition \ref{local_existence} the $H^{2}$-norm of $U$ is controlled by the initial condition. We get for $\epsilon$ and $\theta>0$ small enough
\begin{equation*}
\lver U(t) \rver_{2} \leq C \left(\lver U(0) \rver_{H^{2} \times H^{3}} \right) \left((1+t) e^{- \theta t} \lver U(0) \rver_{H^{2} \times H^{3}} + \int_{0}^{t} (t-s) e^{- \theta (t-s)} \lver U(s) \rver_{2}^{2} ds \right).
\end{equation*}
Denoting $\zeta_{0}(t) = \underset{[0,t]}{\sup} \;\; e^{\frac{\theta}{2} s} \lver U(s) \rver_{2}$, we obtain that for $0 \leq t \leq T$
\begin{equation*}
\zeta_{0}(t) \leq C \left( \lver U(0) \rver_{H^{2} \times H^{3}} \right) \left( \epsilon + \zeta_{0}(t)^{2} \right). 
\end{equation*}
Furthermore, denoting $\zeta_{1}(t) = \underset{[0,t]}{\sup} \;\; \left( e^{\frac{\theta}{2} s} \lver U(s) \rver_{H^{1}} + e^{\frac{\theta}{2} s} \lver U_{t}(s) \rver_{H^{1}} \right)$ and using Proposition \ref{nonlinear_damping}, $\zeta_{1}$ is also controlled on $[0,T]$. Finally, if $\epsilon$ is small enough, we can take $T = +\infty$ and $\zeta_{1}$ is bounded on $\mathbb{R}^{+}$.
\end{proof}

\section{An improvement in some situations}\label{section_improvement}

The main result of this paper, Theorem \ref{stab_result}, states that spectrally stable steady states are stable in $H^{2} \times H^{3}$. In this part, we prove that under more restrictive conditions, we can state a stability result in $H^{1} \times H^{2}$. To achieve that, we add another assumption
\begin{equation}\label{cond2}
\begin{aligned}
&P'' > 0 \text{ if } \hat{u}_{x} > 0 \text{   (compressive solutions)},\\ 
&\frac{P''(y)}{P'(y)} < \frac{2}{y} \text{ and } \hat{\rho}_{x} < \frac{1}{4} \hat{\rho} \text{ if } \hat{u}_{x} < 0 \text{  (small expansive solutions)}.\\
\end{aligned}
\end{equation}
With this additional assumption, we can establish a high frequency estimate in $L^{2}$.
\begin{prop}\label{high_freq_estimate_L2}
Assume that $P$ satisfies \eqref{pressure_cond} and that Condition \eqref{cond2} is satisfied. There exists a constant $\alpha > 0$ such that if $\Re(\lambda) > - \alpha$ and $\lver \lambda \rver$ is large enough,
\begin{equation*}
\lver (\rho,v) \rver^{2}_{2} \leq C \lver \left(\lambda  \mathcal{S} - \mathcal{L} \right) (\rho,v) \rver^{2}_{2}, 
\end{equation*}
for any $(\rho,v)$ satisfying the boundary conditions \eqref{BC_linear_steady}. 
\end{prop}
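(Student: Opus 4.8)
The plan is to mimic the Goodman-type energy estimate of Proposition \ref{high_freq_estimate}, but now working directly in $L^2$ rather than in $H^1$. Writing $(\lambda\mathcal{S}-\mathcal{L})(\rho,v)=(f,g)$ and arguing by density so that $(\rho,v)\in H^2\times H^3$, I would introduce a weighted energy
\begin{equation*}
\mathcal{E}(\rho,v)=\frac12\int_0^1 \psi_1\lver r\rver^2 + \psi_2\lver v\rver^2
\end{equation*}
with weights $\psi_1,\psi_2>0$ tied together by a relation of the form $\psi_1=P'(\hat\rho)\psi_2$ (so that the off-diagonal acoustic terms $\pm\Re(\overline{r_x}v\cdot\text{stuff})$ cancel), and then compute $2\Re(\lambda)\mathcal{E}(\rho,v)=\Re\int_0^1\psi_1\overline r\,\lambda r + \psi_2\overline v\,\lambda\hat\rho v/\hat\rho$ using the two equations of \eqref{linear_steady}. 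Integrating by parts, the top-order interior terms should produce $-\nu\psi_2\lver v_x\rver^2$ together with a zeroth-order quadratic form in $(r,v)$ whose coefficient involves $(\hat u\psi_1)_x$, $\hat u_x\psi_1$, $P''(\hat\rho)\hat\rho_x$, and similar; Condition \eqref{cond2} is precisely what is needed to make the remaining uncontrolled quadratic form (after using $\nu\psi_2\lver v_x\rver^2$ and Young's inequality on cross terms) negative definite, separately in the compressive and expansive cases. The first inequality of Lemma \ref{high_frequency_control} lets one absorb the $\tilde f$-type contributions, and for $\lver\lambda\rver$ large the second inequality of that lemma, $\lver r\rver_2^2+\lver v\rver_2^2\le \frac{C}{\lver\lambda\rver}(\lver r_x\rver_2^2+\lver v_x\rver_2^2+\lver(f,g)\rver_2^2)$, is what upgrades control of $v_x$ into control of the full $L^2$-norm. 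Along the way one must track the boundary terms at $x=0,1$ and use the boundary relations \eqref{boundary_equalities} (in the homogeneous-in-$g$ form appropriate here) together with Lemma \ref{Linfty_controls} to bound them by lower-order quantities plus $\lver f\rver$-terms; the inflow condition $r(0)=v(0)=0$ kills the boundary contributions at $0$, while at $x=1$ one needs a favorable sign for the $\lver r(1)\rver^2$ boundary term (as already exploited in Proposition \ref{spec_stab_constant} and Lemma \ref{gen_C0_semigroup_L2}).

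Concretely, the steps in order: (i) set $(f,g)=(\lambda\mathcal{S}-\mathcal{L})(\rho,v)$, reduce to $(r,v)\in H^2\times H^3$ by density, and record the integrated system as in \eqref{system_int}; (ii) define $\mathcal{E}$ with weights satisfying $\psi_1=P'(\hat\rho)\psi_2$ and a sign condition to be extracted below, and compute $2\Re(\lambda)\mathcal{E}$; (iii) integrate by parts, cancel the acoustic cross terms, and collect the interior quadratic form, isolating the $-\nu\psi_2\lver v_x\rver^2$ term; (iv) estimate the boundary terms at $x=1$ using \eqref{boundary_equalities}, Lemma \ref{Linfty_controls}, and the good sign of $\lver r(1)\rver^2$, those at $x=0$ vanishing; (v) invoke Condition \eqref{cond2} to choose the weights so that the resulting zeroth-order quadratic form is negative definite (this is where $P''>0$ in the compressive case, and $P''/P'<2/\cdot$ together with $\hat\rho_x<\hat\rho/4$ in the expansive case, enter); (vi) use Lemma \ref{high_frequency_control} and Young's inequality, and take $\lver\lambda\rver$ large, to obtain $2\Re(\lambda)\mathcal{E}(\rho,v)\le -\alpha\lver(r,v_x)\rver_2^2 + C\lver(f,g)\rver_2^2$; (vii) since $\Re(\lambda)>-\alpha$ and $\mathcal{E}$ is equivalent to $\lver(r,v)\rver_2^2$, conclude after absorbing, using once more the second estimate of Lemma \ref{high_frequency_control} to promote the $v_x$-control to full $L^2$-control of $v$.

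The main obstacle is step (v): verifying that the zeroth-order quadratic form produced by the integration by parts can be made negative definite under exactly the hypotheses \eqref{cond2}, and no weaker. The coefficient of $\lver r\rver^2$ mixes $(\hat u\psi_1)_x$, $\hat u_x\psi_1$, and $P''(\hat\rho)\hat\rho_x\psi_2$-type terms, and the sign of $\hat\rho_x$ is opposite in the compressive and expansive cases (Lemma \ref{control_almost_constant_state}); one must choose $\psi_1,\psi_2$ (e.g. by an ODE prescription analogous to the footnote in Proposition \ref{high_freq_estimate}, something like $\hat u\psi_1'=c\,\hat u'\psi_1-\delta\hat u$) so that both the $r$-diagonal and the $v$-diagonal entries dominate the off-diagonal cross term after $\nu\psi_2\lver v_x\rver^2$ is spent, and this dichotomy is what forces the two branches of \eqref{cond2}. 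I also expect some care to be needed in the expansive branch, where $P''$ can be negative and the condition $\hat\rho_x<\frac14\hat\rho$ (a smallness/slowly-varying requirement on the profile) is doing real work to keep the quadratic form coercive; the constant $\frac14$ and the $2/y$ threshold should emerge from optimizing the Young's-inequality split, and matching those constants is the delicate bookkeeping.
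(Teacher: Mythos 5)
Your overall strategy --- a Goodman-type weighted $L^2$ energy with weights chosen to cancel the acoustic cross term, a case split into compressive/constant/expansive, and Condition \eqref{cond2} entering through the signs of the resulting coefficients --- is the paper's strategy, and your steps (i)--(iv) match its proof. But there is a genuine gap at step (v). After integration by parts the zeroth-order form contains, besides the diagonal terms in $\lver r\rver^2$ and $\lver v\rver^2$, an off-diagonal term $\int_0^1\Re(v\overline r)\left[(\phi_2)_xP'(\hat\rho)-\phi_1\hat\rho_x-\phi_2\hat u_x\hat u\right]$ whose coefficient has no definite sign and is generically of the same order as the (deliberately weak) negative diagonal coefficients; you propose to absorb it by Young's inequality so that the full $2\times 2$ form is pointwise negative definite, and you yourself flag this as the delicate point. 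The paper does not attempt this, and it is doubtful it can be done: instead it integrates by parts once more to bound this term by $C\lver\tilde r\rver_2\lver v\rver_{H^1}$ with $\tilde r(x)=\int_0^x r$, and then the \emph{first} inequality of Lemma \ref{high_frequency_control} supplies the factor $\lver\lambda\rver^{-1/2}$ that makes it negligible at high frequency. You do cite that inequality, but assign it to ``$\tilde f$-type contributions'', which is not its role here; without the $\tilde r$ device, step (v) as stated is precisely the step that fails, and Condition \eqref{cond2} is needed only to make the two \emph{diagonal} coefficients negative, not the full quadratic form.

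A second, smaller issue: in steps (vi)--(vii) you invoke the second inequality of Lemma \ref{high_frequency_control} to convert control of $\lver v_x\rver_2$ into control of $\lver v\rver_2$. That inequality carries $\lver r_x\rver_2^2$ on its right-hand side, which a pure $L^2$ estimate does not control, so it is not available. It is also unnecessary: with the paper's weights the coefficient of $\lver v\rver^2$ is itself strictly negative (the second displayed sign condition in each of the three cases), so $\lver v\rver_2^2$ is damped directly; alternatively Poincar\'e (Lemma \ref{poincare}), using $v(0)=0$, converts $-\lver v_x\rver_2^2$ into $-c\lver v\rver_2^2$. The remaining deviations (your weight relation $\psi_1=P'(\hat\rho)\psi_2$ versus the paper's $\hat\rho\phi_1=P'(\hat\rho)\phi_2$ with $\phi_2\hat\rho$ weighting $\lver v\rver^2$) are bookkeeping and would be corrected in the course of the computation.
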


\begin{proof}
This proof is based on an appropriate Goodman-type energy estimate. In the following we denote $ \left(\lambda \mathcal{S} - \mathcal{L}\right) (\rho,v) =(f,g)$. We define the following energy 
\begin{equation*}
\mathcal{E} \left(r,v \right) = \frac{1}{2} \int_{0}^{1} \phi_{1} \lver r \rver^{2} + \phi_{2} \hat{\rho} \lver  v \rver^{2}
\end{equation*}
where $\phi_{1}$ and $\phi_{2}$ satisfy
\begin{align*}
\phi_{1} > 0 \text{ ,  } \phi_{2} > 0 \text{ ,  }  \hat{\rho} \phi_{1} = P'(\hat{\rho}) \phi_{2}.
\end{align*}
Then, we compute 
\begin{equation*}
2 \Re(\lambda) \mathcal{E} \left(r,v \right) = \Re \left( \int_{0}^{1} \phi_{1} \overline{r} \lambda r \right) + \Re \left( \int_{0}^{1} \phi_{2} \hat{\rho} \overline{v} \lambda v \right).
\end{equation*}
After some computations we get
\small
\begin{equation*}
\begin{aligned}
2 \Re(\lambda) \mathcal{E} \left(r,v \right) \leq &\int_{0}^{1} - \nu \phi_{2} \lver v_{x} \rver^{2} + \frac{1}{2} \hat{u}^{2} \left( \frac{\phi_{1}}{\hat{u}} \right)_{x} \lver r \rver^{2} + \Re(\overline{r} v_{x}) \left[P'(\hat{\rho}) \phi_{2} - \hat{\rho} \phi_{1} \right] + C \lver (f,g) \rver_{2} \lver (r,v) \rver_{2}\\
&+ \int_{0}^{1} \left[\nu (\phi_{2})_{xx} - 2 \phi_{2} \hat{u}_{x} \hat{\rho} + (\phi_{2})_{x} \hat{\rho} \hat{u} \right]  \frac{\lver v \rver^{2}}{2} + \Re \left(v \overline{r} \right) \left[(\phi_{2})_{x} P'(\hat{\rho}) - \phi_{1} \hat{\rho}_{x} - \phi_{2} \hat{u}_{x} \hat{u} \right]\!.
\end{aligned}
\end{equation*}
\normalsize
Then, we separately consider the three situations $\hat{u}_{x} > 0$ (compressive solution), $\hat{u}_{x} = 0$ (constant solution) and $\hat{u}_{x} < 0$ (expansive solution).
\medskip

\noindent - If $\hat{u}_{x} > 0$, we take $\phi_{2} = 1$, $\phi_{1} = \frac{P'(\hat{\rho})}{\hat{\rho}}$ and we get

\begin{equation*}
\hat{u}^{2} \left( \frac{\phi_{1}}{\hat{u}} \right)_{x} = \hat{u} (\phi_{1})_{x} - \hat{u}_{x} \phi_{1} = \frac{P''(\hat{\rho}) \hat{\rho}_{x} \hat{u}}{\hat{\rho}} < 0 \text{  ,  } \nu (\phi_{2})_{xx} - 2 \phi_{2} \hat{u}_{x} \hat{\rho} + (\phi_{2})_{x} \hat{\rho} \hat{u} = -2 \hat{u}_{x} \hat{\rho} < 0.
\end{equation*}
\medskip

\noindent - If $\hat{u}_{x} = 0$, we take $\phi_{1} = P'(\hat{\rho}) - \beta x$, $\phi_{2} = \hat{\rho} - \beta \frac{\hat{\rho}}{P'(\hat{\rho})} x$ with $\beta > 0$ small enough, and we get

\begin{equation*}
\hat{u} (\phi_{1})_{x} - \hat{u}_{x} \phi_{1} = - \beta \hat{u} < 0 \text{  ,  } \nu (\phi_{2})_{xx} - 2 \phi_{2} \hat{u}_{x} \hat{\rho} + (\phi_{2})_{x} \hat{\rho} \hat{u} =  - \beta \frac{\hat{\rho}^{2} \hat{u}}{P'(\hat{\rho})}  < 0.
\end{equation*}
\medskip

\noindent - If $\hat{u}_{x} < 0$, we take $\phi_{2}(x) = \sqrt{M-2x}$, $M>2$ and $\phi_{1}(x) = \frac{P'(\hat{\rho})}{\hat{\rho}} \phi_{2}(x)$ and thanks to Condition \eqref{cond2} we get

\begin{align*}
&\hat{u} (\phi_{1})_{x} - \hat{u}_{x} \phi_{1} = \frac{\phi_{2}}{\hat{\rho}} P'(\hat{\rho}) \hat{u} \left( \frac{P''(\hat{\rho})}{P'(\hat{\rho})} \hat{\rho}_{x} - \frac{1}{M-2x} \right) < 0,\\
&\nu (\phi_{2})_{xx} - 2 \phi_{2} \hat{u}_{x} \hat{\rho} + (\phi_{2})_{x} \hat{\rho} \hat{u} \leq \phi_{2} \hat{u} \left(2 \hat{\rho}_{x} - \frac{\hat{\rho}}{M-2x} \right)  < 0.
\end{align*}

\medskip

\noindent Moreover, in any case, we have (denoting $\tilde{r}(x) = \int_{0}^{x} r(y) dy$)

\begin{equation*}
\int_{0}^{1} \Re \left(v \overline{r} \right) \left((\phi_{2})_{x} P'(\hat{\rho}) - \phi_{1} \hat{\rho}_{x} - \phi_{2} \hat{u}_{x} \hat{u} \right) \leq C \lver \tilde{r} \rver_{2} \lver v \rver_{H^{1}}.
\end{equation*}
\noindent Thus, using the first inequality of Lemma \ref{high_frequency_control}, we can find a constant $\alpha > 0$, for $\lver \lambda \rver$ large enough,
\begin{equation*}
2 \Re(\lambda) \mathcal{E} \left(r,v \right) \leq - \alpha \lver (r,v) \rver_{2}^{2} + C \lver \left(\lambda - \mathcal{L}\right) (\rho,v) \rver_{2}^{2},
\end{equation*}
and the inequality follows. 
\end{proof}

Thanks to this $L^2$ high frequency estimate, we can improve Proposition \ref{pruss_thm}. Under the assumption that $Re \left( \sigma(\mathcal{S}^{-1} \mathcal{L} \right)) \leq - \alpha < 0$, we get
\begin{equation*}
\lver e^{t \mathcal{S}^{-1} \mathcal{L}} (r,v) \rver_{2} \leq C e^{-\alpha  t} \lver (r,v) \rver_{2}.
\end{equation*}
Furthermore, thanks to the previous appropriate Goodman-type estimate, we can improve the nonlinear damping estimate in Proposition \ref{nonlinear_damping}. If $(r,v)$ in $\mathcal{C}\left([0,T];H^{2} \times H^{1} \right)$ is a solution of \eqref{eq_nonlinear} on $[0,T]$ and 
\begin{equation*}
\underset{[0,T]}{\sup} \lver (r,v)(t) \rver_{H^{1} \times H^{2}} \leq \epsilon
\end{equation*} 
for $\epsilon$ small enough, we have
\begin{equation*}
\lver (r_{t},v_{t})(t) \rver_{2}^{2} \leq C e^{-\theta t} \lver (r,v)(0) \rver_{H^{1} \times H^{2}}^{2} + C \int_{0}^{t} e^{-\theta (t-s)} \lver (r,v)(s) \rver_{2}^{2} ds.
\end{equation*} 
Finally, applying the Duhamel formulation in $L^{2}$, we obtain the following theorem.
\begin{thm}\label{stab_result2}
Let $\rho_0>0,$ $u_0>0$ and $u_1>0$. Let $(\hat{\rho},\hat{u})$ be the unique steady solution of problem \eqref{NS}-\eqref{BC}. Assume that $P$ satisfies \eqref{pressure_cond} and that Condition \eqref{cond2} is satisfied. Assume that there exists $\alpha>0$ such that $\Re (\sigma(\mathcal{S}^{-1} \mathcal{L})) < - \alpha$. Then, there exists  $\epsilon>0$ and $\theta > 0$, for any $\left(\rho_{ini},u_{ini} \right) \in H^{1} \times H^{2}$ satisfying the boundary conditions \eqref{BC} and
\begin{equation*}
\lver \left(\rho_{ini},u_{ini} \right) - \left(\hat{\rho},\hat{u} \right) \rver_{H^{1} \times H^{2}}  \leq \epsilon,
\end{equation*}
the unique solution $(\rho,u)$ of problem \eqref{NS}-\eqref{BC} with the initial condition $\left(\rho_{ini},u_{ini} \right)$ satisfies
\begin{equation*}
\lver \left(\rho,u \right)(t) - \left(\hat{\rho},\hat{u} \right) \rver_{H^{1} \times H^{2}} \leq C \lver \left(\rho_{ini},u_{ini} \right) - \left(\hat{\rho},\hat{u} \right) \rver_{H^{1} \times H^{2}} e^{- \theta t}.
\end{equation*}
\end{thm}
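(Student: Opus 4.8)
The plan is to follow the proof of Theorem \ref{stab_result} essentially verbatim, but one derivative lower everywhere, replacing the $H^1\times L^2$ linear decay of Proposition \ref{pruss_thm} by an $L^2$ linear decay that Condition \eqref{cond2} makes available. Write $U=(r,v)=(\rho,u)-(\hat\rho,\hat u)$, which by \eqref{eq_nonlinear} solves $\mathcal S U_t-\mathcal L U=\mathcal N$ with $r(t,0)=v(t,0)=v(t,1)=0$, where $\mathcal N$ vanishes to second order in $U$ and, as long as $\rho$ stays bounded below, satisfies $\lver \mathcal N(s)\rver_2\le C\big(\lver U(s)\rver_{H^1\times H^2}\big)\,\lver U(s)\rver_{H^1\times H^2}^2$ (using $H^2\hookrightarrow W^{1,\infty}$ and $H^1\hookrightarrow L^\infty$ to handle the worst terms $r v_{xx}$, $r_x v_x$, $r r_x$). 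First I would invoke a local wellposedness result at regularity $H^1\times H^2$ — proved by the same energy estimates underlying Proposition \ref{local_existence}, and, at this level of regularity, requiring no compatibility conditions — giving a maximal solution on $[0,T^*)$ with $\rho$ bounded below on compact subintervals; it then suffices to establish an a priori bound forcing $T^*=+\infty$ together with exponential decay.

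For the linear part, Condition \eqref{cond2} enters exactly through Proposition \ref{high_freq_estimate_L2}. That $L^2$ high-frequency resolvent bound, together with the spectral-gap hypothesis $\Re\sigma(\mathcal S^{-1}\mathcal L)<-\alpha$, the fact that $\sigma(\mathcal S^{-1}\mathcal L)$ consists only of eigenvalues (Proposition \ref{compact_inverse_L}), and continuity of the resolvent, yields a uniform bound for $(\lambda-\mathcal S^{-1}\mathcal L)^{-1}$ on a vertical line $\{\Re\lambda=-\alpha'\}$ for some $0<\alpha'\le\alpha$; Pr\"uss' theorem (\cite{Pruss_theorem}) then gives $\lver e^{t\mathcal S^{-1}\mathcal L}W\rver_2\le C e^{-\alpha' t}\lver W\rver_2$ for all $W\in L^2$, as already noted after Proposition \ref{high_freq_estimate_L2}. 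Applying the Duhamel formula for \eqref{eq_nonlinear} directly in $L^2$ and inserting the nonlinear bound above, one obtains, on any $[0,T]$ with $\sup_{[0,T]}\lver U\rver_{H^1\times H^2}\le\epsilon$,
\[
\lver U(t)\rver_2\le C e^{-\alpha' t}\lver U(0)\rver_2+C\int_0^t e^{-\alpha'(t-s)}\lver U(s)\rver_{H^1\times H^2}^2\,ds .
\]
In contrast with Remark \ref{H1XL2_explanation}, no intermediate $H^1\times L^2$ space is needed here, precisely because the semigroup decays in $L^2$ itself.

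To recover the higher norm I would use the improved nonlinear damping estimate stated just before the theorem: for $\epsilon$ small it gives $\lver (r_t,v_t)(t)\rver_2^2\le C e^{-\theta t}\lver U(0)\rver_{H^1\times H^2}^2+C\int_0^t e^{-\theta(t-s)}\lver U(s)\rver_2^2\,ds$. Combining this with the reconstruction $\lver U(t)\rver_{H^1\times H^2}\le C\big(\lver U(t)\rver_2+\lver (r_t,v_t)(t)\rver_2\big)$ — obtained from \eqref{eq_nonlinear_int} by solving the momentum equation for $v_{xx}$ and the continuity equation for $r_x$ (a small invertible linear system in $(r_x,v_{xx})$, since $\hat u$ is bounded below and $\hat\rho$ is bounded, with the lower-order and nonlinear contributions absorbed for $\epsilon$ small), and bounding $v_x$ by $v$ and $v_{xx}$ — yields a closed integral inequality. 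The bootstrap is then standard: fix $0<\theta_*<\tfrac12\min(\alpha',\theta)$, set $\zeta_0(t)=\sup_{[0,t]}e^{\theta_* s}\lver U(s)\rver_2$ and $\zeta_1(t)=\sup_{[0,t]}e^{\theta_* s}\lver U(s)\rver_{H^1\times H^2}$, feed the three displays into one another to get $\zeta_1(t)\le C\big(\lver U(0)\rver_{H^1\times H^2}+\zeta_1(t)^2\big)$ on $[0,T]$, and conclude, since $\zeta_1$ is continuous with $\zeta_1(0)\le\epsilon$, that $\zeta_1\le 2C\epsilon$ for $\epsilon$ small; hence $\sup_t\lver U\rver_{H^1\times H^2}$ never reaches $\epsilon$, $\rho$ stays bounded below, the continuation criterion gives $T^*=+\infty$, and $\lver U(t)\rver_{H^1\times H^2}\le 2C\epsilon\,e^{-\theta_* t}$.

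I expect the main obstacle to be the self-referential structure of this iteration: the $L^2$ Duhamel estimate needs the $H^1\times H^2$ norm of $U$ in order to control $\mathcal N$, whereas the damping estimate produces that norm only in terms of $(r_t,v_t)$ and the $L^2$ norm of $U$, which in turn relies on the Duhamel estimate. The loop closes only because $\mathcal N$ is quadratic (so the quadratic self-interaction is absorbed by smallness) and because the weights $e^{\theta_* t}$ lie strictly below both linear decay rates, keeping the convolution integrals uniformly bounded; making the nondecreasing factors $C(\cdot)$ stay under control via the a priori smallness — exactly the bookkeeping carried out in Theorem \ref{stab_result} — is the delicate point. A secondary technical matter is setting up the $H^1\times H^2$ local theory without compatibility conditions and justifying the elliptic/transport reconstruction $\lver U\rver_{H^1\times H^2}\le C(\lver U\rver_2+\lver (r_t,v_t)\rver_2)$ with constants uniform over the solution's existence interval.
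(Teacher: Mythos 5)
Your proposal follows the paper's route exactly: the $L^2$ Pr\"uss-type decay obtained from Proposition \ref{high_freq_estimate_L2} under Condition \eqref{cond2}, the Duhamel formula applied directly in $L^2$, the improved nonlinear damping estimate for $(r_t,v_t)$, and the reconstruction of the $H^1\times H^2$ norm from $\lver U \rver_2$ and $\lver U_t \rver_2$, followed by the standard bootstrap. The paper only sketches these steps in the paragraph preceding the theorem, and your write-up supplies the same intended details (including where quadraticity of $\mathcal{N}$ and the spectral gap close the loop), so it is correct and consistent with the paper's argument.
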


\appendix 
\section{$L^{\infty}$ estimates and interpolation}\label{section_appendix}

In this appendix, we recall some basic results about Sobolev spaces in a bounded domain. The first lemma is a Poincar\'e inequality.
\begin{lemma}\label{poincare}
For any $f \in H^{1}(0,1)$ with $f(0)=0$, we have 
\begin{equation*}
\lver f \rver_{2} \leq  2 \lver f_{x} \rver_{2}.
\end{equation*}
\end{lemma}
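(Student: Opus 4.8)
The plan is to exploit the boundary condition $f(0)=0$ to represent $f$ as an antiderivative of $f_x$ and then apply Cauchy--Schwarz pointwise. First I would recall that every $f\in H^1(0,1)$ admits an absolutely continuous representative, so that for all $x\in[0,1]$ one has $f(x)=\int_0^x f_x(y)\,dy$, using $f(0)=0$. If one prefers to avoid invoking absolute continuity directly, this identity may first be verified for $f\in C^1([0,1])$ with $f(0)=0$ and then extended to all such $f\in H^1(0,1)$ by the density of smooth functions together with the continuity of both sides in the $H^1$-norm.

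Next I would bound $\lver f(x)\rver$ for fixed $x\in(0,1)$ by Cauchy--Schwarz applied to the representation above, giving $\lver f(x)\rver^2\le x\int_0^x\lver f_x(y)\rver^2\,dy$. Since $x\le 1$ and the inner integral is at most $\lver f_x\rver_2^2$, this yields the pointwise estimate $\lver f(x)\rver^2\le \lver f_x\rver_2^2$ for every $x\in(0,1)$. Integrating in $x$ over $(0,1)$ then gives $\lver f\rver_2^2=\int_0^1\lver f(x)\rver^2\,dx\le \lver f_x\rver_2^2$, so that $\lver f\rver_2\le \lver f_x\rver_2\le 2\lver f_x\rver_2$, which is the claimed inequality.

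There is no substantive obstacle here: the only point requiring any care is the justification of the integral representation for general $H^1$ functions, which is handled by the standard density/absolute-continuity argument indicated above. I note that the factor $2$ in the statement is not sharp; the argument in fact produces the optimal one-sided Poincar\'e constant $1$, and the looser constant merely leaves convenient room to absorb lower-order terms in the applications of this lemma elsewhere in the paper.
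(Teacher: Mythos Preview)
Your proof is correct. The paper's own argument is a close variant: rather than writing $f(x)=\int_0^x f_x$ and applying Cauchy--Schwarz to $f$, it records the identity $f(x)^2 = 2\int_0^x f\,f_x$ (obtained from $(f^2)'=2ff_x$ together with $f(0)=0$), integrates in $x$, and bounds the right-hand side by $2\lver f\rver_2\lver f_x\rver_2$, giving exactly the constant~$2$. Your route produces the sharper constant~$1$, as you note. The only practical advantage of the paper's formulation is that the squared identity is stated once and then reused verbatim to prove the $L^\infty$ estimate $\lver f\rver_\infty \le \sqrt{2\lver f\rver_2\lver f_x\rver_2}$ in the lemma that follows.
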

\begin{proof}
\noindent For any $x,y \in [0,1]$, we have
\begin{equation}\label{int_sobolev}
f(x)^{2} = f(y)^{2} + 2 \int_{y}^{x} f(z)f'(z)dz. 
\end{equation}
Since $f(0)=0$, we choose $y=0$ and we obtain the result by integrating over $x$.
\end{proof}

The following lemma allows us to control boundary terms and $L^{\infty}$-norms by appropriate Sobolev norms.
\begin{lemma}\label{Linfty_controls}
For any $f \in H^{1}(0,1)$, we have 
\begin{align*}
&\lver f \rver_{\infty} \leq \sqrt{2 \lver f \rver_{2} \lver f_{x} \rver_{2}} \text{  , if  } f(0)=0 \text{, }\\
&\lver f \rver_{\infty} \leq \lver f \rver_{2} + \sqrt{2 \lver f \rver_{2} \lver f_{x} \rver_{2}}.
\end{align*}
\end{lemma}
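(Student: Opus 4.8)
The plan is to obtain both inequalities from the fundamental identity \eqref{int_sobolev} established in the previous lemma, namely $f(x)^2 = f(y)^2 + 2\int_y^x f(z)f'(z)\,dz$ for $x,y\in[0,1]$. The strategy is: bound the integral term uniformly in $x,y$ by Cauchy--Schwarz, and choose $y$ (or average over $y$) so that the remaining term $f(y)^2$ is controlled.

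For the first inequality I would argue as follows. Fix $x\in[0,1]$ at which $|f|$ attains (nearly) its supremum. Since $f(0)=0$, take $y=0$ in \eqref{int_sobolev}, giving
\begin{equation*}
f(x)^2 = 2\int_0^x f(z)f'(z)\,dz \leq 2\int_0^1 |f(z)||f'(z)|\,dz \leq 2\,\lver f\rver_2\,\lver f_x\rver_2,
\end{equation*}
by the Cauchy--Schwarz inequality. Taking the supremum over $x$ and then the square root yields $\lver f\rver_\infty \leq \sqrt{2\lver f\rver_2\lver f_x\rver_2}$.

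For the second inequality (no boundary condition assumed) the point is that one cannot set $f(y)=0$, so instead I would integrate \eqref{int_sobolev} in the variable $y$ over $[0,1]$. For any fixed $x$,
\begin{equation*}
f(x)^2 = \int_0^1 f(y)^2\,dy + 2\int_0^1\!\!\int_y^x f(z)f'(z)\,dz\,dy \leq \lver f\rver_2^2 + 2\int_0^1 |f(z)||f'(z)|\,dz \leq \lver f\rver_2^2 + 2\lver f\rver_2\lver f_x\rver_2.
\end{equation*}
Hence $f(x)^2 \leq \lver f\rver_2^2 + 2\lver f\rver_2\lver f_x\rver_2 \leq \bigl(\lver f\rver_2 + \sqrt{2\lver f\rver_2\lver f_x\rver_2}\bigr)^2$, and taking square roots and then the supremum over $x$ gives the claim. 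The only mild subtlety — hardly an obstacle — is justifying that a continuous representative of $f\in H^1(0,1)$ exists so that $f(x)$, $f(0)$ are meaningful pointwise and $\lver f\rver_\infty$ is the genuine sup; this is standard one-dimensional Sobolev embedding, and \eqref{int_sobolev} already presupposes it.
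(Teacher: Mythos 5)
Your proof is correct and follows the same route as the paper, which simply invokes the identity \eqref{int_sobolev} (with $y=0$ in the first case, and averaging over $y$ in the second) together with the Cauchy--Schwarz inequality. The paper leaves these details implicit; your write-up just makes them explicit.
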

\begin{proof}
It is a direct consequence of the equality \eqref{int_sobolev}.
\end{proof}

We also have the following derivative-interpolation theorem.

\begin{lemma}\label{interpolation_thm}
For any $v \in H^{2}(0,1)$,
\begin{equation*}
\lver v_{x} \rver_{2}^{2} \leq C \lver v \rver_{2}^{2} +  C \lver v \rver_{2} \lver v_{xx} \rver_{2}.
\end{equation*}
\end{lemma}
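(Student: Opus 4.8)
The plan is to establish this Gagliardo--Nirenberg-type inequality by a single integration by parts, controlling the resulting boundary term with the $L^\infty$ bounds of Lemma \ref{Linfty_controls} and then absorbing every lower-order contribution by Young's inequality. Throughout I abbreviate $a=\lver v\rver_2$, $b=\lver v_x\rver_2$, $c=\lver v_{xx}\rver_2$, so that the goal is $b^2\le C(a^2+ac)$.

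First I would integrate by parts, writing
\begin{equation*}
\lver v_x\rver_2^2=\Re\left[\,v\,\overline{v_x}\,\right]_0^1-\Re\int_0^1 v\,\overline{v_{xx}}\,dx .
\end{equation*}
The interior integral is bounded by $ac$ through Cauchy--Schwarz, while the boundary term is bounded by $2\lver v\rver_\infty\,\lver v_x\rver_\infty$. The key modelling decision is to estimate $v_x$ at the two endpoints through $\lver v_x\rver_\infty$ rather than by integrating $v_{xx}$ directly: by the second inequality of Lemma \ref{Linfty_controls} applied to $v$ and to $v_x$,
\begin{equation*}
\lver v\rver_\infty\le a+\sqrt{2ab},\qquad \lver v_x\rver_\infty\le b+\sqrt{2bc}.
\end{equation*}
This brings only a half power of $c$ into the boundary term; estimating $v_x(0),v_x(1)$ by $\int v_{xx}$ would instead produce a full power of $c$ multiplied by $\lver v\rver_\infty\sim\sqrt{ab}$, i.e. a term $a^{1/2}b^{1/2}c$ that is homogeneous of degree two but \emph{cannot} be absorbed into $\epsilon b^2+C(a^2+ac)$, so that route fails.

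Next I would multiply out $2\lver v\rver_\infty\lver v_x\rver_\infty$, obtaining the four terms $2ab$, $2\sqrt2\,a\,b^{1/2}c^{1/2}$, $2\sqrt2\,a^{1/2}b^{3/2}$ and $4\,a^{1/2}b\,c^{1/2}=4b\sqrt{ac}$, each homogeneous of degree two in $(a,b,c)$. Young's inequality then absorbs each into $\epsilon b^2$ plus a harmless remainder: $2ab$ and $2\sqrt2\,a^{1/2}b^{3/2}$ give $\epsilon b^2+Ca^2$; the term $4b\sqrt{ac}$ gives $\epsilon b^2+C\,ac$; and $2\sqrt2\,a\,b^{1/2}c^{1/2}$ gives $\epsilon b^2+C\,a^{4/3}c^{2/3}$. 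Summing, choosing $\epsilon$ small enough that the coefficients of $b^2$ sum to less than one, and absorbing $\epsilon b^2$ into the left-hand side yields $b^2\le C\left(a^2+ac+a^{4/3}c^{2/3}\right)$.

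The one genuinely non-routine point, and hence the main obstacle, is the bookkeeping that keeps every cross term at homogeneity degree two so that it can be absorbed, together with the elementary closing fact that $a^{4/3}c^{2/3}\le a^2+ac$. This last inequality I would verify by a two-case split: if $a\le c$ then $a^{4/3}c^{2/3}\le ac$, while if $a>c$ then $a^{4/3}c^{2/3}\le a^2$. With this the remainder collapses to $C(a^2+ac)$, which is exactly the claimed bound.
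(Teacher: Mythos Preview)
Your proof is correct and follows exactly the same route as the paper: integrate by parts to get $\int v_x^2 = -\int v\,v_{xx} + [v\,v_x]_0^1$, then control the boundary term via Lemma~\ref{Linfty_controls}. The paper's proof is terser---it simply says ``the result follows from the previous lemma''---whereas you have spelled out the Young's-inequality absorption of the four cross terms and the final estimate $a^{4/3}c^{2/3}\le a^2+ac$; your added remark on why bounding $v_x$ at the endpoints through $\int v_{xx}$ fails is a nice observation but not needed for the argument.
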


\begin{proof}
Integrating by parts, we get
\begin{equation*}
\int_{0}^{1} v_{x}^{2} dx = -\int_{0}^{1} v v_{xx} dx + \left[ v v_{x} \right]_{0}^{1}.
\end{equation*}
The result follows from the previous lemma.
\end{proof}

\scriptsize
\bibliographystyle{alpha}
\bibliography{biblio}

\begin{thebibliography}{EGGP12}

\bibitem[AGJ90]{gardner_jones_stab_ind}
J.~Alexander, R.~Gardner, and C.~Jones.
\newblock A topological invariant arising in the stability analysis of
  travelling waves.
\newblock {\em Journal fur die Reine und Angewandte Mathematik},
  1990(410):167--212, 1990.

\bibitem[BFZ15a]{zumbrun_convex_entropy}
B.~Barker, H.~Freist\"uhler, and K.~Zumbrun.
\newblock Convex entropy, {H}opf bifurcation, and viscous and inviscid shock
  stability.
\newblock {\em Arch. Ration. Mech. Anal.}, 217(1):309--372, 2015.

\bibitem[BFZ15b]{Barker_convexEntropy}
B.~Barker, H.~Freist{\"u}hler, and K.~Zumbrun.
\newblock Convex entropy, hopf bifurcation, and viscous and inviscid shock
  stability.
\newblock {\em Archive for Rational Mechanics and Analysis}, 217(1):309--372,
  Jul 2015.

\bibitem[BHRZ08]{num_stab_zum}
B.~Barker, J.~Humpherys, K.~Rudd, and Kevin Zumbrun.
\newblock Stability of viscous shocks in isentropic gas dynamics.
\newblock {\em Communications in Mathematical Physics}, 281(1):231, 2008.

\bibitem[CHNZ09]{toan_zumbrun_ns}
N.~Costanzino, J.~Humpherys, T.~Nguyen, and K.~Zumbrun.
\newblock Spectral stability of noncharacteristic isentropic {N}avier-{S}tokes
  boundary layers.
\newblock {\em Arch. Ration. Mech. Anal.}, 192(3):537--587, 2009.

\bibitem[EGGP12]{control_ns_1d}
S.~Ervedoza, O.~Glass, S.~Guerrero, and J.P. Puel.
\newblock Local exact controllability for the one-dimensional compressible
  {N}avier-{S}tokes equation.
\newblock {\em Arch. Ration. Mech. Anal.}, 206(1):189--238, 2012.

\bibitem[GZ98]{gardner_zumbrun_gap_lemma}
R.~A. Gardner and K.~Zumbrun.
\newblock The gap lemma and geometric criteria for instability of viscous shock
  profiles.
\newblock {\em Communications on Pure and Applied Mathematics}, 51(7):797--855,
  1998.

\bibitem[HI11]{haragus_Iooss}
M.~Haragus and G.~Iooss.
\newblock {\em Local bifurcations, center manifolds, and normal forms in
  infinite-dimensional dynamical systems}.
\newblock Universitext. Springer-Verlag London, Ltd., London; EDP Sciences, Les
  Ulis, 2011.

\bibitem[JP03]{Jiu_Pan_scalar_cons_bounded}
Q.~Jiu and T.~Pan.
\newblock Asymptotic behaviors of the solutions to scalar viscous conservation
  laws on bounded interval.
\newblock {\em Acta Mathematicae Applicatae Sinica}, 19(2):297, 2003.

\bibitem[KK86]{Kreiss_Kreiss_burgers}
G.~Kreiss and H.-O. Kreiss.
\newblock Convergence to steady state of solutions of {B}urgers' equation.
\newblock {\em Appl. Numer. Math.}, 2(3-5):161--179, 1986.

\bibitem[KK97]{kellogg_inflow_bounded}
J.~R. Kweon and R.~B. Kellogg.
\newblock Compressible {N}avier-{S}tokes equations in a bounded domain with
  inflow boundary condition.
\newblock {\em SIAM J. Math. Anal.}, 28(1):94--108, 1997.

\bibitem[KK98]{kellogg_inflow_unbounded}
J.~R. Kweon and R.~B. Kellogg.
\newblock Smooth solution of the compressible {N}avier-{S}tokes equations in an
  unbounded domain with inflow boundary condition.
\newblock {\em J. Math. Anal. Appl.}, 220(2):657--675, 1998.

\bibitem[MN82]{Matsumura_Nishida}
A.~Matsumura and T.~Nishida.
\newblock Initial-boundary value problems for the equations of motion of
  general fluids.
\newblock In {\em Computing methods in applied sciences and engineering,
  ({V}ersailles, 1981)}, pages 389--406. North-Holland, Amsterdam, 1982.

\bibitem[MN01]{Matsumura_Nishihara}
A.~Matsumura and K.~Nishihara.
\newblock Large-time behaviors of solutions to an inflow problem in the half
  space for a one-dimensional system of compressible viscous gas.
\newblock {\em Comm. Math. Phys.}, 222(3):449--474, 2001.

\bibitem[Mon14]{Monteiro2014}
R.~F. Monteiro.
\newblock Transverse steady bifurcation of viscous shock solutions of a system
  of parabolic conservation laws in a strip.
\newblock {\em Journal of Differential Equations}, 257(6):2035 -- 2077, 2014.

\bibitem[MP14]{poiseuille_Stab_3d}
P.~B. Mucha and T.~Piasecki.
\newblock Compressible perturbation of {P}oiseuille type flow.
\newblock {\em J. Math. Pures Appl. (9)}, 102(2):338--363, 2014.

\bibitem[MZ03]{mascia_zumbrun}
C.~Mascia and K.~Zumbrun.
\newblock Pointwise green function bounds for shock profiles of systems with
  real viscosity.
\newblock {\em Archive for Rational Mechanics and Analysis}, 169(3):177--263,
  Sep 2003.

\bibitem[NZ09]{toan_zumbrun_hyp_par}
T.~Nguyen and K.~Zumbrun.
\newblock Long-time stability of large-amplitude noncharacteristic boundary
  layers for hyperbolic-parabolic systems.
\newblock {\em J. Math. Pures Appl. (9)}, 92(6):547--598, 2009.

\bibitem[Paz83]{pazy_semigroup}
A.~Pazy.
\newblock {\em Semigroups of linear operators and applications to partial
  differential equations}, volume~44 of {\em Applied Mathematical Sciences}.
\newblock Springer-Verlag, New York, 1983.

\bibitem[Pr{\"u}84]{Pruss_theorem}
J.~Pr{\"u}ss.
\newblock On the spectrum of {$C_{0}$}-semigroups.
\newblock {\em Trans. Amer. Math. Soc.}, 284(2):847--857, 1984.

\bibitem[PW94]{pego_weinstein_stab_ind}
R.~L. Pego and M.~I. Weinstein.
\newblock Asymptotic stability of solitary waves.
\newblock {\em Comm. Math. Phys.}, 164(2):305--349, 1994.

\bibitem[SS00]{sandstede_abs_conv_instab}
B.~Sandstede and A.~Scheel.
\newblock Absolute and convective instabilities of waves on unbounded and large
  bounded domains.
\newblock {\em Phys. D}, 145(3-4):233--277, 2000.

\bibitem[SZ01]{Serre_Zumbrun}
D.~Serre and K.~Zumbrun.
\newblock Boundary layer stability in real vanishing viscosity limit.
\newblock {\em Comm. Math. Phys.}, 221(2):267--292, 2001.

\bibitem[Yos78]{Yosida_book}
K.~Yosida.
\newblock {\em Functional analysis}.
\newblock Springer-Verlag, Berlin-New York, fifth edition, 1978.
\newblock Grundlehren der Mathematischen Wissenschaften, Band 123.

\bibitem[Zum01]{Zumbrun_multi_dim_stab_planar}
K.~Zumbrun.
\newblock {\em Multidimensional Stability of Planar Viscous Shock Waves}, pages
  307--516.
\newblock Birkh{\"a}user Boston, Boston, MA, 2001.

\bibitem[Zum10]{zumbrun_standingshock}
K.~Zumbrun.
\newblock Stability of noncharacteristic boundary layers in the standing-shock
  limit.
\newblock {\em Transactions of the American Mathematical Society},
  362(12):6397--6424, 2010.

\end{thebibliography}
\normalsize

\end{document}